\newtheorem{theorem}{Theorem}[section]
\newtheorem{proposition}[theorem]{Proposition}
\newtheorem{definition}[theorem]{Definition}
\theoremstyle{remark}
\newtheorem{remark}[theorem]{Remark}
\newtheorem*{acknowledgments}{Acknowledgments}
\numberwithin{equation}{section}
\newcommand{\Order}{\mathcal{O}}
\newcommand{\into}{\hookrightarrow}
\newcommand{\onto}{\twoheadrightarrow}
\newcommand{\isomto}{\overset{\sim}{\to}}
\newcommand{\tensor}{\mathbin{\otimes}}
\newcommand{\closure}[1]{\overline{#1}}
\newcommand{\Z}{\mathbb{Z}}
\newcommand{\Q}{\mathbb{Q}}
\newcommand{\F}{\mathbb{F}}
\newcommand{\et}{\mathrm{et}}
\newcommand{\fppf}{\mathrm{fppf}}
\newcommand{\Ga}{\mathbf{G}_{a}}
\newcommand{\dirlim}{\varinjlim}
\newcommand{\invlim}{\varprojlim}
\newcommand{\sep}{\mathrm{sep}}
\newcommand{\ur}{\mathrm{ur}}
\newcommand{\ind}{\mathrm{ind}}
\newcommand{\rat}{\mathrm{rat}}
\newcommand{\pro}{\mathrm{pro}}
\newcommand{\Alg}{\mathrm{Alg}}
\newcommand{\alg}[1]{\mathbf{#1}}
\newcommand{\var}{\;\cdot\;}
\newcommand{\ideal}[1]{\mathfrak{#1}}
\newcommand{\ctensor}{\mathbin{\Hat{\otimes}}}
\newcommand{\Loc}{\mathrm{L}}
\newcommand{\uc}{\mathrm{uc}}
\newcommand{\ca}{\mathrm{ca}}
\newcommand{\Pro}{\mathrm{P}}
\newcommand{\Ind}{\mathrm{I}}
\newcommand{\Et}{\mathrm{Et}}
\newcommand{\SDual}{\mathrm{SD}}
\DeclareMathOperator{\Gal}{Gal}
\DeclareMathOperator{\Spec}{Spec}
\DeclareMathOperator{\Ab}{Ab}
\DeclareMathOperator{\Lie}{Lie}
\DeclareMathOperator{\length}{length}
\DeclareMathOperator{\Res}{Res}
\DeclareMathOperator{\Gr}{Gr}
\DeclareMathOperator{\Sel}{Sel}
\DeclareMathOperator{\sheafhom}{\mathbf{Hom}}
\DeclareFontFamily{U}{wncy}{}
\DeclareFontShape{U}{wncy}{m}{n}{<->wncyr10}{}
\DeclareSymbolFont{mcy}{U}{wncy}{m}{n}
\DeclareMathSymbol{\Sha}{\mathord}{mcy}{"58}
\title[$\mu$-invariants and isogenies]
{On $\mu$-invariants and isogenies for abelian varieties over function fields}
\author[Ghosh]{Sohan Ghosh}
\address[Ghosh]{
	Indian Institute of Science Bangalore, Bengaluru 560012, India
}
\email{ghoshsohan4@gmail.com}
\author[Ray]{Jishnu Ray\(^{1,2}\)}
\address[Ray]{\!\!\(^1\) Harish-Chandra Research Institute, Chhatnag Road, Jhunsi, Prayagraj 211 019, India.}
\address[Ray]{\!\!\(^2\) Homi Bhabha National Institute, Training School Complex, Anushakti Nagar, Mumbai 400 094, India.}
\email{jishnuray@hri.res.in; jishnuray1992@gmail.com}
\author[Suzuki]{Takashi Suzuki}
\address[Suzuki]{
	Department of Mathematics, Chuo University,
	1-13-27 Kasuga, Bunkyo-ku, Tokyo 112-8551, Japan
}
\email{tsuzuki@gug.math.chuo-u.ac.jp}
\date{June 25, 2026}
\subjclass[2020]{11R23 (Primary) 11G10, 11R58, 11G40, 14F20 (Secondary)}
\keywords{Iwasawa invariants; abelian varieties;
function fields; Birch--Swinnerton-Dyer conjectures}
\begin{document}

\begin{abstract}
	We give several formulas for
	how Iwasawa $\mu$-invariants of abelian varieties
	over unramified $\Z_{p}$-extensions of function fields change under isogeny.
	These are analogues of Schneider's formula in the number field setting.
	We also prove that the validity of the Birch--Swinnerton-Dyer conjecture
	(including the leading coefficient formula) over function fields
	is invariant under isogeny,
	without using the result of Kato--Trihan.
\end{abstract}

\maketitle

\tableofcontents


\section{Introduction}
\label{0079}

Schneider \cite{Sch87} studies
how Iwasawa $\mu$-invariants of dual Selmer groups of abelian varieties
over cyclotomic $\Z_{p}$-extensions of number fields change
under isogenies of abelian varieties.
On the other hand, Iwasawa theory for abelian varieties over
unramified $\Z_{p}$-extensions of global function fields in one variable
of characteristic $p$ is initiated by Ochiai--Trihan \cite{OT09}.
In this paper, we will carry out an analogue of Schneider's studies
over unramified $\Z_{p}$-extensions of global function fields.

Let $f \colon A_{1} \to A_{2}$ be an isogeny of abelian varieties
over a global function field $K$ of characteristic $p > 0$.
Let $k$ be the constant field of $K$.
Let $q = \# k$.
Let $K_{\infty}$ be the (unique) unramified $\Z_{p}$-extension of $K$,
with subextension $K_{n}$ of degree $p^{n}$ for each $n \ge 0$.
Set $\Lambda = \Z_{p}[[\Gal(K_{\infty} / K)]]$.
For $i = 1, 2$ and $n \ge 0$, let $\Sel(A_{i} / K_{n})$ be
the Selmer group of $A_{i} \times_{K} K_{n}$.
Set $\Sel(A_{i} / K_{\infty}) = \dirlim_{n} \Sel(A_{i} / K_{n})$,
which has a natural action by $\Gal(K_{\infty} / K)$.
The Pontryagin dual $\Sel(A_{i} / K_{\infty})[p^{\infty}]^{\vee}$
of the $p$-primary part of $\Sel(A_{i} / K_{\infty})$
is a finitely generated torsion $\Lambda$-module
by \cite[Theorem 1.7]{OT09}.
Define the $\mu$-invariant $\mu_{A_{i} / K}$ to be
$1 / [k : \F_{p}]$ times the length of 
	\[
			\Sel(A_{i} / K_{\infty})[p^{\infty}]^{\vee}
		\tensor_{\Lambda}
			\Lambda_{(p)}
	\]
as a $\Lambda_{(p)}$-module.
We will give three different formulas for $\mu_{A_{2} / K} - \mu_{A_{1} / K}$
in terms of the kernel of $f$ and some local quantities.

Let $X$ be the proper smooth curve over $k$ with function field $K$.
Let $\mathcal{A}_{i}$ be the N\'eron model over $X$ of $A_{i}$.
Let $\mathcal{N}_{f} \subset \mathcal{A}_{1}$ be the schematic closure
of the kernel of $f \colon A_{1} \to A_{2}$ in $\mathcal{A}_{1}$,
which is a separated, quasi-finite, flat group scheme over $X$.
In Definition \ref{0048}, we will define a certain nonnegative integer $c(f)$,
which we call the \emph{conductor} of $f$.
It is a locally defined quantity
that measures the potential failure of $f$ to extend to an isogeny on the N\'eron models.

For a nonzero rational number $r$, let $v_{q}(r)$ be the $q$-valuation of $r$,
which is the $p$-adic valuation divided by $[k : \F_{p}]$
(so $v_{q}(q) = 1$).
Our first formula is the following:

\begin{theorem}[Proposition \ref{0049}, Theorem \ref{0004}] \label{0050} \mbox{}
	\begin{enumerate}
		\item
			For any separated, quasi-finite, flat group scheme $\mathcal{N}$ over $X$,
			the flat cohomology group $H^{i}(X, \mathcal{N})$ is finite for all $i$
			and zero for almost all $i$.
			In particular, we can define a number $\chi(X, \mathcal{N})$ by
				\[
						\chi(X, \mathcal{N})
					:=
						\prod_{i}
							(\# H^{i}(X, \mathcal{N}))^{(-1)^{i}}.
				\]
		\item
			We have
				\[
						\mu_{A_{2} / K} - \mu_{A_{1} / K}
					=
							v_{q}(\chi(X, \mathcal{N}_{f}))
						-
							c(f).
				\]
			In particular, if $f$ extends to an isogeny on the N\'eron models, then
				\[
						\mu_{A_{2} / K} - \mu_{A_{1} / K}
					=
						v_{q}(\chi(X, \mathcal{N}_{f})).
				\]
	\end{enumerate}
\end{theorem}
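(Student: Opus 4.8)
The plan is to prove (1) by dévissage to finite flat group schemes, and (2) by transporting the $\mu$-invariant to flat cohomology over $X$ and then exploiting the two exact sequences that the isogeny $f$ produces on integral models.

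For (1): a quasi-finite flat separated group scheme $\mathcal{N}$ over the Dedekind scheme $X$ is finite flat over some dense open $U\subseteq X$, and at each of the finitely many closed points $x\in X\setminus U$ it decomposes, over $\mathcal{O}_{X,x}$, into a finite flat part and a part supported over the generic point. Assembling the local finite parts yields a finite flat group scheme $\mathcal{N}^{\mathrm{ff}}$ over $X$ together with an exact triangle comparing $R\Gamma_{\fppf}(X,\mathcal{N})$, $R\Gamma_{\fppf}(X,\mathcal{N}^{\mathrm{ff}})$, and the cohomology of skyscraper sheaves with finite stalks on $X\setminus U$. For $\mathcal{N}^{\mathrm{ff}}$ finite flat over the proper smooth curve $X/\F_{q}$, finiteness and boundedness of $H^{i}_{\fppf}(X,\mathcal{N}^{\mathrm{ff}})$ is classical (dévissage to $\mu_{p}$, $\alpha_{p}$, $\Z/p$ and prime-to-$p$ étale group schemes, using finiteness of $\Pic(X)/p$, of $\pi_{1}(X)^{\mathrm{ab}}/p$ and of the coherent cohomology of $X$, together with the boundedness of flat cohomology of $X$ with finite coefficients in degrees $0$ through $3$ via flat duality); the skyscraper terms are finite and live in finitely many degrees. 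Hence $H^{i}(X,\mathcal{N})$ is finite for all $i$ and vanishes for $|i|\gg 0$, so $\chi(X,\mathcal{N})$ is well defined. The same telescoping argument also shows $\chi(X,\mathcal{G})=1$ for every finite flat $\mathcal{G}$, which is why $\chi(X,\mathcal{N}_{f})$ is really a product of local contributions at the bad places and fits the shape of Schneider's formula.

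For (2): the key input, which I would take from the earlier part of the paper (building on \cite{OT09} and on flat duality over $X$), is a cohomological description of $\mu_{A/K}$. Because $K_{\infty}/K$ is the constant-field $\Z_{p}$-extension, the dual $p$-primary Selmer group over $K_{\infty}$ is governed by a perfect complex $C(\mathcal{A})$ of $\Z_{p}[[\Gal(K_{\infty}/K)]]$-modules built from the flat cohomology over $X$ of $\mathcal{A}$ (equivalently of $\mathcal{A}[p^{\infty}]$), so that $[k:\F_{p}]\,\mu_{A/K}$ equals the alternating sum of the $\mu$-invariants of $H^{*}(C(\mathcal{A}))$; moreover $\mathcal{G}\mapsto C(\mathcal{G})$ turns short exact sequences of flat $X$-group sheaves into exact triangles, and sends a quasi-finite flat separated $\mathcal{N}$ over $X$ to a complex of total $\mu$-invariant $-[k:\F_{p}]\ord_{q}\chi(X,\mathcal{N})$. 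Granting this, I would run the isogeny as follows. The schematic closure gives the exact sequence $0\to\mathcal{N}_{f}\to\mathcal{A}_{1}\to\mathcal{B}\to 0$ of fppf sheaves on $X$, with $\mathcal{B}=\mathcal{A}_{1}/\mathcal{N}_{f}$ a smooth separated $X$-group scheme with generic fibre $A_{2}$; the N\'eron mapping property factors $f$ through a homomorphism $\psi\colon\mathcal{B}\to\mathcal{A}_{2}$ whose kernel and cokernel are skyscraper sheaves with finite stalks supported on the (finite) bad locus, and $c(f)$ is (Definition \ref{0048}) the $\ord_{q}$ of the suitably normalized Euler characteristic of the complex $[\Ker\psi\to\Coker\psi]$. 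Applying $C(-)$ and comparing total $\mu$-invariants, the first sequence gives $\mu^{\mathrm{tot}}C(\mathcal{B})-\mu^{\mathrm{tot}}C(\mathcal{A}_{1})=[k:\F_{p}]\ord_{q}\chi(X,\mathcal{N}_{f})$, while the $\psi$-sequence gives $\mu^{\mathrm{tot}}C(\mathcal{A}_{2})-\mu^{\mathrm{tot}}C(\mathcal{B})=-[k:\F_{p}]c(f)$; adding and dividing by $[k:\F_{p}]$ yields $\mu_{A_{2}/K}-\mu_{A_{1}/K}=\ord_{q}\chi(X,\mathcal{N}_{f})-c(f)$. If $f$ extends to an isogeny on the N\'eron models then $\psi$ is an isomorphism, so $c(f)=0$ and the stated special case follows.

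The step I expect to be the main obstacle is the cohomological description of $\mu_{A/K}$ and, above all, its exactness in the coefficient group scheme. None of the naive groups $H^{i}_{\fppf}(X,\mathcal{A})$, $H^{i}_{\fppf}(X,\mathcal{B})$ are finite — they involve Mordell--Weil groups, Tate--Shafarevich groups (which are not assumed finite here), and component groups — so the complex $C(\mathcal{A})$ must be a compactly-supported or Selmer-type complex over the Iwasawa tower, and one must prove it is perfect over $\Z_{p}[[\Gal(K_{\infty}/K)]]$ (this is where the finiteness theorem of \cite{OT09} enters) and that its formation is triangulated, tracking along the way the skyscraper corrections coming from failures of $p$-divisibility and from component groups. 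A secondary, more computational obstacle is identifying the resulting correction term precisely with the integer $c(f)$ of Definition \ref{0048}; this should reduce, place by place on $X$, to a local comparison over the completed local rings relating the component groups of $\mathcal{A}_{1}$ and $\mathcal{A}_{2}$, the flat cohomology of $\mathcal{N}_{f}$, and the structure of $\psi$. Finally, in (1), assembling the local finite parts into a global finite flat group scheme requires a little care with henselization versus completion, but is a routine gluing argument on the curve.
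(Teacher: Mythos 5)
Your plan for part (2) hinges entirely on an unconstructed object: a perfect complex $C(\mathcal{A})$ of $\Z_{p}[[\Gal(K_{\infty}/K)]]$-modules that (a) computes $[k:\F_{p}]\mu_{A/K}$ as a total $\mu$-invariant, (b) is exact in the coefficient fppf sheaf on $X$, and (c) sends a quasi-finite flat separated $\mathcal{N}$ to a complex of total $\mu$-invariant $-[k:\F_{p}]\ord_{q}\chi(X,\mathcal{N})$. These three properties together are essentially the theorem itself, and you correctly flag them as ``the main obstacle'' but do not supply them; the paper does not build such an Iwasawa-theoretic complex either, but instead works with the geometric cohomology $\alg{H}^{i}(X,\cdot)$ on $\Spec k^{\ind\rat}_{\pro\et}$, uses $\mu_{A/K}=\dim\alg{H}^{1}(X,\mathcal{A})$ (from \cite{LLSTT21}) to prove the general-base-field identity of Theorem \ref{0003}, and only then converts dimensions into orders of finite groups via Proposition \ref{0005} (Lang's theorem). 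Moreover, your description of the second exact sequence is incorrect in a way that hides the other hard step: the kernel and cokernel of $\psi\colon\mathcal{A}_{2}'\to\mathcal{A}_{2}$ are \emph{not} skyscrapers with finite stalks. The defect at a bad place is governed by the Liu--Lorenzini--Raynaud group $D_{f,v}$, a smooth group scheme over $k(v)$ of positive dimension exactly when $c_{v}(f)\neq 0$; by Definition \ref{0048}, $c(f)$ is a sum of dimensions, not the $\ord_{q}$ of a finite Euler characteristic of $[\Ker\psi\to\Coker\psi]$. Identifying the correction term with $c(f)$ is precisely the local computation via Greenberg transforms (\eqref{0011}, \eqref{0013}, \eqref{0041}), plus the unipotence of $D_{f,v}$ and Lang-type counting (Proposition \ref{0005} \eqref{0002}) to turn a dimension into a power of $q$; none of this is in your sketch. (Your bookkeeping with the two triangles does mirror the paper's use of \eqref{0040} and \eqref{0029}, but the engine that legitimizes it is missing.)

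In part (1) there are two concrete problems. First, your side claim that ``the same telescoping argument shows $\chi(X,\mathcal{G})=1$ for every finite flat $\mathcal{G}$'' is false, and it contradicts the whole point of Theorem \ref{0053} (Milne's Problem 8.10): $\chi(X,\mathcal{N})=q^{\deg R\Lie\mathcal{N}}$ is in general nontrivial. For instance, if $L$ is a line bundle on $X$ of nonzero degree and $\mathcal{G}$ is the kernel of the relative Frobenius on the associated vector group (a twisted form of $\alpha_{p}$), then $\chi(X,\mathcal{G})=q^{(1-p)\deg L}\neq 1$; similarly for Frobenius kernels of abelian schemes. The global contributions are therefore not concentrated at the bad places, and the quantity $\ord_{q}\chi(X,\mathcal{N}_{f})$ in (2) would vanish if your claim were true. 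Second, the ``global finite part'' $\mathcal{N}^{\mathrm{ff}}$ you propose to glue need not exist: the finite part of a quasi-finite separated scheme is defined over henselian (or complete) local rings and does not in general spread out to a Zariski neighborhood, so there is no routine gluing to a finite flat subgroup scheme over $X$ with skyscraper quotient. The paper's dévissage is different and does work: the schematic closure of the infinitesimal part of the generic fiber is finite flat, reducing to the case of étale generic fiber, and then the compact-support triangle \eqref{0039} splits the cohomology into an étale part over a dense open $U$ and local terms over the $\Order_{v}$, where finite parts are available (Proposition \ref{0038}); finiteness over $\F_{q}$ then follows from Proposition \ref{0005} \eqref{0001}. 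Citing Milne's Lemma III.8.9 for the finite flat case is fine, but your reduction to it needs to be replaced by an argument of this kind.
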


We can rewrite the term $v_{q}(\chi(X, \mathcal{N}_{f}))$
with a certain linear or coherent object.
For any separated, quasi-finite, flat group scheme $\mathcal{N}$ over $X$,
let $R \Lie \mathcal{N} := l_{\mathcal{N}}^{\vee}$ be the Lie complex
as defined by Illusie \cite[Chapter VII, Section 3.1.1]{Ill72}.
It is a perfect complex of $\Order_{X}$-modules
whose zeroth cohomology is the usual Lie algebra $\Lie \mathcal{N}$.
Let $\deg(R \Lie \mathcal{N})$ be its degree with respect to the base field $k$.

\begin{theorem}[Theorem \ref{0030}] \label{0053}
	For any separated, quasi-finite, flat group scheme $\mathcal{N}$ over $X$,
	we have
		\[
				\chi(X, \mathcal{N})
			=
				q^{\deg(R \Lie \mathcal{N})}.
		\]
\end{theorem}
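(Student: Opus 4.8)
I would prove the identity for an arbitrary quasi-finite flat separated $\mathcal{N}$ over $X$ by successive reductions, using that both sides are additive in short exact sequences. For a short exact sequence $0 \to \mathcal{N}' \to \mathcal{N} \to \mathcal{N}'' \to 0$ of fppf sheaves whose terms have finite, almost-everywhere-vanishing cohomology --- which holds for quasi-finite flat separated group schemes by Theorem~\ref{0050}(1), and trivially for finite group schemes over the finite residue fields of closed points of $X$ --- the long exact cohomology sequence gives $\chi(X, \mathcal{N}) = \chi(X, \mathcal{N}')\chi(X, \mathcal{N}'')$, while a short exact sequence of flat group schemes induces a distinguished triangle of co-Lie complexes $l_{\mathcal{N}''} \to l_{\mathcal{N}} \to l_{\mathcal{N}'} \to l_{\mathcal{N}''}[1]$ (Illusie), so $\deg R\Lie$ is additive too. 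Two observations: (i) since $\mathcal{N}_{K}$ is a finite group scheme over $K$, the perfect complex $R\Lie \mathcal{N}=l_{\mathcal{N}}^{\vee}$ has generic rank zero, hence $\deg R\Lie \mathcal{N}$ is its coherent Euler characteristic over $k$ and $q^{\deg R\Lie \mathcal{N}} = \prod_{i}(\# H^{i}(X, R\Lie \mathcal{N}))^{(-1)^{i}}$ with each $H^{i}(X,R\Lie\mathcal N)$ a finite-dimensional $k$-vector space; (ii) an open immersion $\mathcal{N} \into \bar{\mathcal{N}}$ of $X$-group schemes induces $R\Lie \mathcal{N} = R\Lie \bar{\mathcal{N}}$, as the unit section of $\bar{\mathcal{N}}$ lands in $\mathcal{N}$ and the cotangent complex restricts to open subschemes. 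Now by the structure theory of quasi-finite flat separated group schemes over Dedekind schemes, $\mathcal{N}$ is a dense open subgroup scheme of a finite flat $\bar{\mathcal{N}}$ with $\mathcal{Q}:=\bar{\mathcal{N}}/\mathcal{N}$ supported on finitely many closed points $x$, where $\mathcal{Q}_{x}$ is a finite flat $\kappa(x)$-group scheme. By (ii) $R\Lie\mathcal N=R\Lie\bar{\mathcal N}$, while $\chi(X,\mathcal{N}) = \chi(X,\bar{\mathcal N})\prod_{x}\chi(\Spec\kappa(x),\mathcal Q_{x})^{-1}$, and $\chi(\Spec \F_{q'},\mathcal{G})=1$ for any finite flat group scheme $\mathcal{G}$ over a finite field $\F_{q'}$ (a finite scheme over $\overline{\F_{q'}}$ has a rational point, so $R\Gamma_{\fppf}(\Spec\overline{\F_{q'}},\mathcal{G})=\mathcal{G}(\overline{\F_{q'}})$ in degree $0$, whence $R\Gamma_{\fppf}(\Spec\F_{q'},\mathcal{G})=R\Gamma(\Gal(\overline{\F_{q'}}/\F_{q'}),\mathcal{G}(\overline{\F_{q'}}))$ has $H^{0}$ and $H^{1}$ of equal order and nothing else). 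Hence $\chi(X,\mathcal N)=\chi(X,\bar{\mathcal N})$, so we may assume $\mathcal{N}$ finite flat; splitting off the prime-to-$p$ part (étale, with $R\Lie=0$, and $\chi(X,\cdot)=1$ by $R\Gamma_{\fppf}(X,\cdot)=R\Gamma(\widehat{\Z},R\Gamma_{\et}(X_{\overline{\F_{q}}},\cdot))$ with finite inner cohomology and $\chi(\widehat{\Z},\cdot)=1$ on complexes with finite cohomology), we may take $\mathcal{N}$ of $p$-power order.

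\textbf{Reduction to an isogeny of smooth group schemes.}
Next I would realize $\mathcal{N}$ as the kernel of an isogeny of smooth commutative $X$-group schemes. Embed $\mathcal{N}$ as a closed subgroup of a smooth commutative $\mathcal{G}$ over $X$ with $\mathcal{C}:=\mathcal{G}/\mathcal{N}$ again smooth --- on generic fibres one may embed $\mathcal{N}_{K}$ into a product of Witt group schemes and tori (via the Artin--Schreier--Witt, Witt-vector, and Kummer sequences applied to the étale, local--local, and multiplicative parts of $\mathcal N_K$), then spread out and take a smooth model and its fppf quotient; the quotient of a smooth $X$-group scheme by a flat closed subgroup over a Dedekind base is smooth. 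From $0 \to \mathcal{N} \to \mathcal{G} \to \mathcal{C} \to 0$ and the smoothness of $\mathcal{G},\mathcal{C}$ (so $R\Lie\mathcal{G}=\Lie\mathcal{G}$ and $R\Lie\mathcal{C}=\Lie\mathcal{C}$ are vector bundles) we get $\deg R\Lie\mathcal{N}=\deg\Lie\mathcal{G}-\deg\Lie\mathcal{C}$, while $\chi(X,\mathcal{N})^{-1}$ is the (finite, by Theorem~\ref{0050}(1)) Euler characteristic of $\mathrm{cone}\bigl(R\Gamma(X,\mathcal{G})\to R\Gamma(X,\mathcal{C})\bigr)$. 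Thus the theorem reduces to: for any isogeny $\phi\colon\mathcal{G}_{1}\to\mathcal{G}_{2}$ of smooth commutative $X$-group schemes,
\[
		\chi\bigl(\mathrm{cone}(R\Gamma(X,\mathcal{G}_{1})\xrightarrow{\phi}R\Gamma(X,\mathcal{G}_{2}))\bigr)
	=
		q^{\,\deg\Lie\mathcal{G}_{2}-\deg\Lie\mathcal{G}_{1}}.
\]

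\textbf{Proof of the isogeny formula.}
Since a commutative finite flat group scheme of order $n$ is killed by $n$, multiplication by $n:=\deg\phi$ on $\mathcal{G}_{1}$ factors through $\phi$, say $[n]=\psi\circ\phi$, so by the octahedral axiom $\chi(\mathrm{cone}(R\Gamma\phi))\,\chi(\mathrm{cone}(R\Gamma\psi))=\chi(\mathrm{cone}(R\Gamma[n]))$. Applying this together with $[n]=[m]\circ[p^{a}]$ and the Frobenius--Verschiebung factorization $[q^{b}]=V_{q^{b}}\circ F_{q^{b}}$ (after enlarging $n$ to a power of $q$) reduces the claim to $\phi$ a relative Frobenius $F_{q^{b}}\colon\mathcal{G}\to\mathcal{G}^{(q^{b})}$, a Verschiebung $V_{q^{b}}$, or multiplication by an integer prime to $p$; the last is the étale case again. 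For $\phi=F_{q^{b}}$ write $\mathcal{G}^{(q^{b})}=F^{b*}\mathcal{G}$ with $F\colon X\to X$ the $q$-power Frobenius, finite locally free of degree $q$, so $R\Gamma(X,\mathcal{G}^{(q^{b})})=R\Gamma(X,F^{b}_{*}F^{b*}\mathcal{G})$; combining a dévissage of $\mathcal{G}$ with the projection formula --- which for a vector bundle $\mathcal{E}$ gives $\chi_{k}(X,F^{b*}\mathcal{E})=q^{b}\deg\mathcal{E}+\rank(\mathcal{E})(1-g)=\chi_{k}(X,\mathcal{E})+(q^{b}-1)\deg\mathcal{E}$, using $\chi_{k}(X,F^{b}_{*}\Order_{X})=\chi_{k}(X,\Order_{X})$ --- yields $\chi(\mathrm{cone}(R\Gamma F_{q^{b}}))=q^{(q^{b}-1)\deg\Lie\mathcal{G}}=q^{\deg\Lie\mathcal{G}^{(q^{b})}-\deg\Lie\mathcal{G}}$. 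The Verschiebung case is dual --- via Cartier duality of the Frobenius and Verschiebung kernels and Artin--Milne flat duality over the proper curve $X$ --- and supplies the complementary factor, which finishes the displayed formula and hence the theorem.

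\textbf{Where the difficulty is.}
The genuinely delicate step is the Frobenius (and dually Verschiebung) computation in the previous paragraph, and it is hard for a structural reason: over the whole curve $X$ a finite flat group scheme need not admit a filtration by finite flat subgroup schemes with subquotients among the forms of $\mu_{p}$, $\alpha_{p}$, $\Z/p$, since the ranks of its multiplicative, unipotent, and local--local parts may jump from fibre to fibre, so one cannot simply reduce to those three cases. One must instead run the dévissage at the level of the smooth ambient group $\mathcal{G}$ and its cotangent complex, and --- because $R\Gamma(X,\mathcal{G})$ has infinite cohomology as soon as $\mathcal{G}$ has multiplicative parts --- interpret every Euler characteristic through the finite-cohomology cone, controlling $R\Gamma(X,\mathcal{G})$ only via coherent Riemann--Roch on $X$ (unipotent direction) and the class field theory of $X$ (toric direction). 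I expect the cleanest route is precisely to keep $\mathcal{G}$ of the form (Witt groups)$\times$(tori), with no abelian varieties entering; then the Frobenius and Verschiebung contributions are bookkeeping, and no input from the arithmetic of abelian varieties over $K$ is used, consistent with the paper's aim of bypassing Kato--Trihan.
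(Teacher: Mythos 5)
Your two structural reduction steps both contain genuine gaps. First, the claim that ``by the structure theory of quasi-finite flat separated group schemes over Dedekind schemes, $\mathcal{N}$ is a dense open subgroup scheme of a finite flat $\bar{\mathcal{N}}$'' is unjustified and false in general: since a dense open subgroup has the same generic fiber, this would force the finite $K$-group scheme $\mathcal{N}_{K}$ to admit a finite flat prolongation over all of $X$ (in particular over every $\Order_{v}$) containing the given model, and such prolongations need not exist --- the standard example being the quasi-finite kernels of isogenies of N\'eron models at places of bad reduction. What is true, and what the paper exploits, is the opposite-direction local statement (over each henselian $\Order_{v}$ the group $\mathcal{N}$ has an open \emph{finite part}) together with finiteness of $\mathcal{N}$ over a dense open $U$; the paper's proof of Theorem \ref{0027} therefore splits the computation into compactly supported (\'etale) cohomology over $U$ plus local terms at $v \in X \setminus U$ computed via $e^{\ast}\Omega^{1}_{\mathcal{N}/\Order_{v}}$, and handles the global group by the filtration $\mathcal{N}^{m} \subset \mathcal{N}^{0} \subset \mathcal{N}$ (schematic closures of the multiplicative part and identity component of the generic fiber) combined with the Cartier duality identities $\deg R\Lie N = -\deg R\Lie M$ and $\chi^{0}(X,N) = -\chi^{0}(X,M)$. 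Your reduction to a finite flat group scheme over $X$ cannot be the first step.

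Second, even granting a finite flat $p$-power $\mathcal{N}$ embedded in a smooth $\mathcal{G}$ with smooth quotient, your d\'evissage of the isogeny formula breaks down: to contain a $p$-power group scheme, $\mathcal{G}$ necessarily has unipotent factors (vector or Witt groups), and on these multiplication by $n = p^{a}$ is \emph{not} an isogeny --- $[p]$ kills $\Ga$ and has positive-dimensional kernel on $W_{m}$, and likewise Verschiebung on $\Ga$ is zero --- so neither $[n]$ nor the factor $\psi$ in $[n]=\psi\circ\phi$ has finite flat kernel, the cones lose their interpretation, and the claimed reduction to relative Frobenius and Verschiebung does not go through. Moreover the step you yourself identify as the crux (the Frobenius computation, and the Verschiebung case ``by duality'') is only sketched: the projection-formula calculation concerns coherent Euler characteristics of $F^{b\ast}$ of the Lie algebra, and the bridge from that to the fppf cone of Frobenius on a group $\mathcal{G}$ with toric parts (where $R\Gamma(X,\mathcal{G})$ is not finite) is precisely what needs proof. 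The paper avoids all of this: for the case of height-one Cartier dual it resolves $\mathcal{N}$ by vector groups and uses the identity $\chi^{0}(X,G)=\chi(X,\Lie G)$ directly, the \'etale-generic-fiber case is local as above, and the multiplicative case is transported by Cartier duality; the finite-field statement then follows from the general-base-field identity $\chi^{0}(X,\mathcal{N})=\deg R\Lie\mathcal{N}$ via Proposition \ref{0005}. Your minor facts (additivity of both sides, $\chi=1$ over finite fields for finite flat group schemes and for prime-to-$p$ coefficients) are fine, but the proof as proposed does not stand.
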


This solves the problem of finding a formula for $\chi(X, \mathcal{N})$,
which is originally posed by Milne
\cite[Chapter III, Problem 8.10]{Mil06} when $\mathcal{N}$ is finite.
It implies our second formula,
which is a function field version of
the ``Isogeny formula (First form)''
at the end of \cite{Sch87}:

\begin{theorem}[Theorem \ref{0031}] \label{0052}
	We have
		\[
				\mu_{A_{2} / K} - \mu_{A_{1} / K}
			=
					\deg(R \Lie \mathcal{N}_{f})
				-
					c(f).
		\]
	In particular, if $f$ extends to an isogeny on the N\'eron models, then
		\[
				\mu_{A_{2} / K} - \mu_{A_{1} / K}
			=
				\deg(R \Lie \mathcal{N}_{f}).
		\]
\end{theorem}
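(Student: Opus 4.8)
The plan is to obtain Theorem \ref{0052} as a direct consequence of the two preceding theorems, with essentially no new work. First I would record that $\mathcal{N}_f$, being the schematic closure in $\mathcal{A}_1$ of $\Ker f$, is a quasi-finite flat separated group scheme over $X$ (as already noted above), so that both Theorem \ref{0050} and Theorem \ref{0053} apply to $\mathcal{N} = \mathcal{N}_f$. Theorem \ref{0050}(2) then gives
\[
\mu_{A_2/K} - \mu_{A_1/K} = \ord_q\bigl(\chi(X, \mathcal{N}_f)\bigr) - c(f),
\]
while Theorem \ref{0053} gives $\chi(X, \mathcal{N}_f) = q^{\deg R\Lie \mathcal{N}_f}$. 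Applying $\ord_q$ to the latter identity and using $\ord_q(q) = 1$ yields $\ord_q(\chi(X, \mathcal{N}_f)) = \deg R\Lie \mathcal{N}_f$, and substituting into the displayed formula proves the main assertion.

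For the ``in particular'' clause, I would unwind Definition \ref{0048}: since $c(f)$ is a non-negative, locally defined integer measuring the failure of $f$ to extend to an isogeny on the N\'eron models, it vanishes when $f$ does extend to such an isogeny, and then the formula collapses to $\mu_{A_2/K} - \mu_{A_1/K} = \deg R\Lie \mathcal{N}_f$. The only input here not already packaged into the two cited theorems is this vanishing of $c(f)$, which should be immediate from its construction.

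Accordingly, I do not expect any genuine obstacle at this stage: all the substance is upstream --- the finiteness and Euler-characteristic statement of Theorem \ref{0050}(1), the Selmer-group comparison along $f$ behind Theorem \ref{0050}(2), and above all the coherent Euler characteristic identity of Theorem \ref{0053}, which resolves Milne's problem. The one thing to watch is the normalization bookkeeping: $\mu_{A_i/K}$, $\ord_q$, and $\deg R\Lie$ are all normalized relative to the constant field $k = \F_q$ rather than $\F_p$, so I would double-check that these conventions are mutually consistent and that no stray factor of $[k : \F_p]$ creeps in. As sanity checks I would compare the formula with Schneider's ``Isogeny formula (First form)'' in the number field setting, and verify that both sides are additive under composition $A_1 \to A_2 \to A_3$ of isogenies, using the evident short exact sequences among the associated closures $\mathcal{N}$ together with additivity of $\deg R\Lie$ along distinguished triangles of perfect complexes.
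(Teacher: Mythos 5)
Your proposal is correct for the statement as posed, and it is essentially the same ``combine the two upstream results'' argument, but you traverse the logical diagram in a different order than the paper does. The paper proves the body version, Theorem \ref{0031}, directly at the level of a general perfect base field $k$: it combines Theorem \ref{0003} ($\mu_{A_{2}/K}-\mu_{A_{1}/K}=\chi^{0}(X,\mathcal{N}_{f})-c(f)$, with $\mu$ defined geometrically via $\dim \alg{H}^{1}(X,\mathcal{A})$) with Theorem \ref{0027} ($\chi^{0}(X,\mathcal{N})=\deg R\Lie\mathcal{N}$), and the finite-field statement \ref{0052} is then a special case. You instead combine the finite-field corollaries of those two results, namely Theorem \ref{0004} (= Theorem \ref{0050}(2)) and Theorem \ref{0030} (= Theorem \ref{0053}), and apply $\ord_{q}$; since $\ord_{q}(q^{\deg R\Lie\mathcal{N}_{f}})=\deg R\Lie\mathcal{N}_{f}$, this does prove the stated formula, and the normalizations ($\mu$, $\ord_{q}$, $\deg$ all taken relative to $k=\F_{q}$) are indeed mutually consistent, so no factor $[k:\F_{p}]$ appears. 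What your route does not recover is the more general Theorem \ref{0031} for arbitrary perfect $k$, since passing through cardinalities and $\ord_{q}$ only makes sense over a finite field; the paper's route buys that generality for free. One small point to make explicit in the ``in particular'' clause: the vanishing of $c(f)$ when $\tilde{f}$ is an isogeny on the N\'eron models is exactly the content of Propositions \ref{0000} and \ref{0010} (the kernel of $\tilde{f}$ is then quasi-finite flat separated, hence equals $\mathcal{N}_{f}$, and $\mathcal{A}_{2}'\to\mathcal{A}_{2}$ is an open immersion), so you should cite that rather than treat it as immediate from Definition \ref{0048}.
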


In \cite[Theorem 1.1]{Suz25},
this theorem is applied to prove that
the Faltings height of an abelian variety over $K$ is duality invariant.

The term $\deg(R \Lie \mathcal{N}_{f})$
changes in a complicated manner
if $K$ is replaced by a finite extension.
We can replace it by a more stable invariant,
at the expense of adding and subtracting
Chai's base change conductors \cite{Cha00} of the abelian varieties $A_{1}$ and $A_{2}$.

Let $c(A_{i})$ be the base change conductor of $A_{i}$
(whose definition is recalled in
Definitions \ref{0058} and \ref{0061} \eqref{0060} below).
Let $L / K$ be a finite separable extension
over which $A_{i}$ has semistable reduction everywhere.
Let $\mathcal{N}_{f, L}$ be the object $\mathcal{N}_{f}$,
this time defined from the base change
$f \colon A_{1} \times_{K} L \to A_{2} \times_{K} L$ of $f$ to $L$.
The ratio $\deg(R \Lie \mathcal{N}_{f, L}) / [L : K]$ is
independent of the choice of $L$.
Denote it by $\deg_{X}(R \Lie \mathcal{N}_{f, K^{\sep}})$.
Our third formula is the following:

\begin{theorem}[Theorem \ref{0047}] \label{0055}
	We have
		\[
				\mu_{A_{2} / K} - \mu_{A_{1} / K}
			=
					\deg_{X}(R \Lie \mathcal{N}_{f, K^{\sep}})
				+ 
					c(A_{2}) - c(A_{1}).
		\]
\end{theorem}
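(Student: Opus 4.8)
The plan is to start from the second formula (Theorem \ref{0052}), which gives $\mu_{A_{2}/K} - \mu_{A_{1}/K} = \deg R\Lie\mathcal{N}_{f} - c(f)$, and to replace the unstable quantity $\deg R\Lie\mathcal{N}_{f}$ by the base-change-stable quantity $\deg_{X} R\Lie\mathcal{N}_{f,K^{\sep}}$. Concretely, I must prove the identity
\[
    \deg R\Lie\mathcal{N}_{f} - c(f)
  =
    \deg_{X} R\Lie\mathcal{N}_{f,K^{\sep}} + c(A_{2}) - c(A_{1}).
\]
Since all four of the correction terms $c(f)$, $\deg_{X} R\Lie\mathcal{N}_{f,K^{\sep}}$, $c(A_{1})$, $c(A_{2})$ are defined place-by-place (the conductor $c(f)$ is explicitly described as locally defined in the introduction, Chai's base change conductor is a sum of local contributions, and the stabilized Lie degree is defined via a finite extension $L/K$ and a ratio), the first step is to reduce everything to a purely local statement at each closed point $x \in X$: I would localize at the completed local ring $\Order_{X,x}^{\wedge}$ with fraction field $K_{x}$, and show that the local contributions on the two sides agree at $x$, after which summing over $x$ (all but finitely many terms vanishing, since $f$ extends to an isogeny of Néron models over a dense open) gives the global identity.

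Next, I would analyze the local picture. Fix $x$ and let $L_{x}/K_{x}$ be a finite separable extension over which both $A_{1}$ and $A_{2}$ acquire semistable reduction; this $L_{x}$ can be taken to be the localization of the global $L$. Over $\Order_{L_{x}}$ the Néron models of $A_{1}$ and $A_{2}$ have semiabelian identity components, and I want to compare $\Lie$ complexes before and after base change. The key input is the behaviour of the Lie complex of a quasi-finite flat group scheme under ramified base change, together with the defining property of Chai's base change conductor: $c(A_{i})$ measures precisely the jump in the degree of $\Lie$ of the Néron model when passing from $K_{x}$ to $L_{x}$ (after dividing by $[L_{x}:K_{x}]$ and subtracting). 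One extracts from the short exact sequence $0 \to \mathcal{N}_{f} \to \mathcal{A}_{1} \to \mathcal{A}_{2}$ (over the locus where $f$ extends) or more generally from the functorial triangle relating $R\Lie\mathcal{N}_{f}$ to $R\Lie$ of the two Néron models a comparison: $\deg R\Lie\mathcal{N}_{f}$ differs from $\deg R\Lie\mathcal{A}_{1} - \deg R\Lie\mathcal{A}_{2}$ by the local conductor term $c(f)$, and likewise after base change to $L$, with the semistable case being the one where the base change conductor of $\mathcal{N}_{f}$ itself is controlled. Matching the two computations and invoking the additivity $c(A_{i}) = \sum_{x} c(A_{i},x)$ and the definition $\deg_{X} R\Lie\mathcal{N}_{f,K^{\sep}} = \deg R\Lie\mathcal{N}_{f,L}/[L:K]$ then yields the desired per-place identity.

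The main obstacle I anticipate is the bookkeeping of ramification in the Lie complex under wildly ramified base change, which is unavoidable in characteristic $p$: the usual tame-case formulas for how $\Lie$ of a Néron model behaves do not apply, and one must work with the full perfect complex $R\Lie\mathcal{N}$ rather than just $\Lie\mathcal{N}$, tracking the $H^{0}$ and $H^{-1}$ (or $R^{1}$) contributions separately. Chai's base change conductor was precisely engineered to be the right invariant here, so the strategy is to reduce to his results as a black box; the delicate part is verifying that the difference $\deg R\Lie\mathcal{N}_{f} - \deg_{X} R\Lie\mathcal{N}_{f,K^{\sep}}$ is exactly the difference of base change conductors $c(A_{2}) - c(A_{1})$ and not, say, a base change conductor of $\mathcal{N}_{f}$ with a sign — this requires a careful compatibility check between Illusie's Lie complex formation, the exact sequence $0 \to T \to G \to A \to 0$ expressing semiabelian reduction, and Chai's definition via the cotangent space. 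I would isolate this compatibility as a lemma, prove it first in the tamely ramified case as a sanity check against Schneider's number-field formula, and then invoke Chai's general additivity theorem \cite{Cha00} to handle the wild case uniformly.
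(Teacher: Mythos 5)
Your proposal is correct and is essentially the paper's own argument: it reduces to Theorem \ref{0052} and verifies $\deg R\Lie\mathcal{N}_{f}-c(f)=\deg_{X}R\Lie\mathcal{N}_{f,K^{\sep}}+c(A_{2})-c(A_{1})$ by comparing the triangle $R\Lie\mathcal{N}_{f}\to\Lie\mathcal{A}_{1}\to\Lie\mathcal{A}_{2}'$ with its counterpart after semistable base change, with $c(f)$ accounting for the discrepancy between $\Lie\mathcal{A}_{2}'$ and $\Lie\mathcal{A}_{2}$ and Chai's conductors (Proposition \ref{0045}) for the jump in the Lie algebras of the N\'eron models --- the paper merely runs this globally over $X^{\sep}$ via $\deg_{X}$ rather than place by place. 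The one simplification worth noting is that no wild-ramification bookkeeping or Chai additivity theorem is needed: over the semistable extension $L$ the induced map $\Tilde{f}_{L}$ is an isogeny of N\'eron models with kernel $\mathcal{N}_{f,L}$, so $\Lie\mathcal{A}_{2,L}'=\Lie\mathcal{A}_{2,L}$ and the conductor correction disappears there, after which the identity follows formally.
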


Our results and methods are also useful for studying
the behavior of the Birch--Swinnerton-Dyer conjecture over function fields
under an isogeny,
including the formula for the leading coefficient of the $L$-function.
This formula (under the assumption of the finiteness of $\Sha(A)$) is proved by
Kato--Trihan \cite{KT03}.
Here we prove, without using the result of \cite{KT03},
that the validity of the leading coefficient formula is
invariant under an arbitrary isogeny on $A$,
whose degree may be divisible by the characteristic $p > 0$.
Proving this isogeny invariance is a problem posed by
Tate \cite[Section 4, First paragraph]{Tat95}
and Milne \cite[Chapter III, Problem 9.10]{Mil06}.

\begin{theorem}[Theorem \ref{0051}] \label{0057}
	The Birch--Swinnerton-Dyer formula holds for $A_{1}$
	if and only if it does for $A_{2}$.
\end{theorem}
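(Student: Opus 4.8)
The plan is to separate out what actually has to be proved — the isogeny invariance of the \emph{arithmetic} side of the formula — and then to run a Cassels--Tate style comparison in which the characteristic-$p$ corrections are precisely the flat-cohomological quantities appearing in Theorems \ref{0050}--\ref{0055}.

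\textbf{Reduction.}
Write $L^{*}(A,1)$ for the leading coefficient of $L(A,s)$ at $s=1$, so that the Birch--Swinnerton-Dyer formula for $A/K$ reads $\ord_{s=1}L(A,s)=\rank A(K)$ together with
\[
  L^{*}(A,1)
  =
  \frac{\#\Sha(A)\cdot R(A)\cdot\prod_{v}c_{v}(A)}
       {\#A(K)_{\tor}\cdot\#A^{\vee}(K)_{\tor}}
  \cdot q^{-\deg\Lie\mathcal{A}},
\]
with $R(A)$ the N\'eron--Tate regulator, $c_{v}(A)$ the local Tamagawa numbers, and $q^{-\deg\Lie\mathcal{A}}$ the function-field period. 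The isogeny $f$ induces an isomorphism $V_{\ell}A_{1}\cong V_{\ell}A_{2}$ of Galois representations for every $\ell\ne p$, whence $L(A_{1},s)=L(A_{2},s)$, and $\rank A_{1}(K)=\rank A_{2}(K)$ since $f$ is an isomorphism on Mordell--Weil groups up to finite kernel and cokernel; so the order-of-vanishing statement and the left-hand side above are literally the same for $A_{1}$ and $A_{2}$. Also $\Sha(A_{1})$ is finite if and only if $\Sha(A_{2})$ is: this is classical away from $p$, and at $p$ it is a diagram chase using that $\ker f$ and $\ker f^{\vee}$ are finite. It therefore suffices to prove that the rational number
\[
  \beta(A)
  :=
  \frac{\#\Sha(A)\cdot R(A)\cdot\prod_{v}c_{v}(A)}
       {\#A(K)_{\tor}\cdot\#A^{\vee}(K)_{\tor}}
  \cdot q^{-\deg\Lie\mathcal{A}}
\]
is unchanged under isogeny; applied to $f$ this gives $\beta(A_{1})=\beta(A_{2})$, and the theorem follows.

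\textbf{Comparison via flat cohomology.}
To compare $\beta(A_{1})$ and $\beta(A_{2})$ I would express each factor through flat cohomology of the N\'eron model over $X$ — using the standard exact sequences relating $H^{i}(X,\mathcal{A})$ to $A(K)$, to $\Sha(A)$, and to the $c_{v}(A)$ — and then exploit the two fppf short exact sequences attached to $f$. Generically $\ker f$ closes up to $\mathcal{N}_{f}$, giving
\[
  0 \longrightarrow \mathcal{N}_{f} \longrightarrow \mathcal{A}_{1} \longrightarrow \mathcal{A}_{1}/\mathcal{N}_{f} \longrightarrow 0,
\]
while the canonical extension of $f$ to the N\'eron models factors as $\mathcal{A}_{1}\onto\mathcal{A}_{1}/\mathcal{N}_{f}\to\mathcal{A}_{2}$, the second arrow being an isomorphism on generic fibres whose local discrepancy at the finitely many bad places is exactly the conductor $c(f)$ of Definition \ref{0048}; there is a mirror pair of sequences for $f^{\vee}\colon A_{2}^{\vee}\to A_{1}^{\vee}$ with kernel $\mathcal{N}_{f^{\vee}}$, linked to the Cartier dual of $\mathcal{N}_{f}$ up to the same type of local correction. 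Running $H^{i}(X,-)$ through these sequences presents the ratio of the global parts of $\beta(A_{2})/\beta(A_{1})$ as a signed product of orders of the groups $H^{i}(X,\mathcal{N}_{f})$ and $H^{i}(X,\mathcal{N}_{f^{\vee}})$, i.e.\ as a power of $\chi(X,\mathcal{N}_{f})$, which by Theorem \ref{0053} is $q^{\pm\deg R\Lie\mathcal{N}_{f}}$. Applying instead the additive invariant $\deg R\Lie(-)$ to the same sequences, and using that $R\Lie\mathcal{A}_{i}=\Lie\mathcal{A}_{i}$ sits in degree $0$, shows that the period change $\deg\Lie\mathcal{A}_{2}-\deg\Lie\mathcal{A}_{1}$ equals $\pm\deg R\Lie\mathcal{N}_{f}$ up to the same bad-place corrections. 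The period and global-cohomology factors therefore cancel.

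\textbf{Local terms; the main obstacle.}
What remains is a finite sum of local contributions at the places where $f$ does not extend to an isogeny of N\'eron models: the ratios $c_{v}(A_{1})/c_{v}(A_{2})$, the flat cohomology of $\mathcal{N}_{f}$ over the henselisations $\mathcal{O}_{X,v}^{h}$, and the local pieces of $c(f)$. Showing these cancel is, I expect, the main obstacle; it is the function-field substitute for the global reciprocity that forces Schneider's local terms to cancel in the number field case. I would establish it by local flat duality: the local Tamagawa ratio at $v$ is read off from an Euler characteristic of $\mathcal{N}_{f}$ over $\mathcal{O}_{X,v}^{h}$, the companion contribution comes from $\mathcal{N}_{f^{\vee}}$, and the two are matched by the perfect local pairing between $\mathcal{N}_{f}$ and its Cartier dual, the failure of naivety in this matching being exactly $c(f)$. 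The subtle points are genuinely new in characteristic $p$: the infinitesimal (height-one, non-reduced) part of $\mathcal{N}_{f}$, which is invisible over number fields, and the gap between $\mathcal{N}_{f}$ and the full kernel of $f$ on the N\'eron models. Once the local cancellation is proved, $\beta(A_{2})/\beta(A_{1})=1$; combined with the coincidence of the analytic sides and the transfer of $\Sha$-finiteness, this proves the theorem.
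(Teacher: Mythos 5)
Your outline is the classical Cassels--Tate comparison, and as written it has a genuine gap: the step you yourself flag as ``the main obstacle'' --- the cancellation of the local terms at the places where $f$ does not extend to an isogeny of N\'eron models, together with the matching of $\mathcal{N}_{f^{\vee}}$ against the Cartier dual of $\mathcal{N}_{f}$ --- is exactly where the difficulty of Milne's Problem 9.10 sits, and you only sketch a strategy (local flat duality) rather than prove it. Two concrete problems there: (i) the schematic closure $\mathcal{N}_{f^{\vee}}\subset\mathcal{A}_{2}^{\vee}$ is Cartier dual to $\mathcal{N}_{f}$ only over the good-reduction locus, and controlling the discrepancy at the bad fibres is not a routine correction; (ii) comparing $\#\Sha$, the torsion terms, the regulator and the Tamagawa factors separately under an isogeny of degree divisible by $p$ requires delicate bookkeeping of the $p$-part of the Selmer/Sha comparison, which your reduction treats as ``a diagram chase'' but which is precisely the content one must supply. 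Until that local cancellation is established, the proposal is a plan, not a proof.

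It is also worth noting how the paper sidesteps all of this. The BSD formula is taken in its Weil-\'etale form \eqref{0028}, so the whole arithmetic side ($\Sha$, regulator, torsion, Tamagawa numbers) is packaged into a single Euler characteristic $\chi_{W}(X,\mathcal{A})$, and no dual abelian variety or dual isogeny ever appears. The comparison then uses only the two triangles built from $0\to\mathcal{N}_{f}\to\mathcal{A}_{1}\to\mathcal{A}_{2}'\to 0$ and the cone $\mathcal{D}_{f}=[\mathcal{A}_{2}'\to\mathcal{A}_{2}]$, whose cohomology is concentrated in the groups $D_{f,v}$ at the bad places; the global input is Theorem \ref{0030} (via Proposition \ref{0043}), and the local input is nothing more than the Lang-theorem counting of Proposition \ref{0005}~\eqref{0002} applied to $D_{f,v}$, giving $\chi(k(v),D_{f,v})=q_{v}^{\chi(k(v),\Lie D_{f,v})}$. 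So the ``local reciprocity'' you hope to extract from flat duality is replaced by an elementary dimension count on the groups $D_{f,v}$, and the period/cohomology cancellation you describe becomes two three-line Euler-characteristic identities. If you want to salvage your route, you would essentially have to reprove these local statements in the language of Tamagawa numbers and Cartier duality, which is strictly harder than the paper's argument.
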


Kato--Trihan's proof of the BSD formula
(under finiteness of Tate--Shafarevich groups)
uses heavy integral $p$-adic cohomology arguments,
while our simple proof of isogeny invariance does not.

We will also give another simpler (yet conditional) proof of Theorem \ref{0052}
without using Theorem \ref{0050} or \ref{0053}.
It instead uses the $\mu$-invariant formula given by
the third author's article with Lai--Longhi--Tan--Trihan \cite{LLSTT21}.
This formula is proved in many cases,
but not in full expected generality.
Therefore, the proof based on this $\mu$-invariant formula,
though simpler, is conditional.

Actually a certain version of the $\mu$-invariant can be geometrically defined
even if the constant field $k$ is not finite but,
more generally, perfect of positive characteristic.
It is the dimension of the ``Tate--Shafarevich scheme'' constructed
by the third author \cite{Suz20a}
based on ``the ind-rational pro-\'etale site of $k$''
$\Spec k^{\ind\rat}_{\pro\et}$.
That it agrees with the $\mu$-invariant when $k$ is finite
is proved in \cite[Theorem 2.4.3]{LLSTT21}.
In this paper, we will review some notation about $\Spec k^{\ind\rat}_{\pro\et}$
and then formulate and prove geometric versions of
Theorems \ref{0050}, \ref{0053}, \ref{0052} and \ref{0055}
for general perfect $k$.
We will see that these versions immediately imply the original versions above.

\begin{acknowledgments}
The second author thanks Aditya Karnataki for inviting him to Chennai Mathematical Institute in July 2023. He also thanks Anwesh Ray for discussing similar questions dealt in this article in the number field setup for non-commutative Lie extensions which inspired him to ask these questions in the function field case. He also gratefully acknowledges support from the Inspire research grant. We thank the reviewer for the detailed review and careful comments. The first author acknowledges the support received from the HRI postdoctoral fellowship.
\end{acknowledgments}


\section{Notation and conventions}

All groups and group schemes are assumed to be commutative.

Let $\Ab$ be the category of abelian groups.
For an abelian category $\mathcal{A}$,
its derived category is denoted by $D(\mathcal{A})$
and its bounded derived category by $D^{b}(\mathcal{A})$.
The mapping cone of a morphism $A \to B$ between complexes in $\mathcal{A}$
is denoted by $[A \to B]$
(hence, its negative shift $[A \to B][-1]$ is the mapping fiber).
If we say $A \to B \to C$ is a distinguished triangle in $D(\mathcal{A})$,
we implicitly assume that a morphism $C \to A[1]$ to the shift of $A$ is given
and the resulting triangle $A \to B \to C \to A[1]$ is distinguished.
If $F \colon \mathcal{A} \to \mathcal{B}$ is an additive functor
from an abelian category with enough K-injectives
to another abelian category,
then its right derived functor is denoted by
$R F \colon D(\mathcal{A}) \to D(\mathcal{B})$.

We denote the category of sheaves of abelian groups 
on a Grothendieck site $S$ by $\Ab(S)$.
Denote $D^{\ast}(S) = D^{\ast}(\Ab(S))$
with $\ast = $ (blank) or $b$.
For $X \in S$ and $A \in D(S)$,
denote by $R \Gamma(X, A)$ the cohomology of $S$ at $X$
with coefficients in $A$.

For a commutative ring $R$ and an $R$-module $M$,
let $\length_{R} M$ be the length of $M$.

For a quasi-compact, irreducible, regular scheme $X$ of dimension $1$
with perfect residue fields at closed points,
let $X_{0}$ be the set of closed points of $X$.
For $v \in X_{0}$, let $\Order_{v}$ be the completed local ring of $X$ at $v$,
with fraction field $K_{v}$ and residue field $k(v)$.

For a power $q = p^{e}$ of a prime $p$
and a nonzero rational number $r$,
let $v_{q}(r)$ be the $q$-valuation of $r$,
which is the $p$-adic valuation divided by $e$
(so $v_{q}(q) = 1$).


\section{The ind-rational pro-\'etale site}

Let $k$ be a perfect field of characteristic $p > 0$.
We recall some notation about perfections of algebraic groups
and the ind-rational pro-\'etale site
$\Spec k^{\ind\rat}_{\pro\et}$ from \cite{Suz20b} and \cite{Suz20a}.
See also \cite[Section 3]{OS26}.

The perfection of a $k$-algebra $R$ (respectively a $k$-scheme $X$)
is the direct limit $R \to R \to \cdots$
(respectively the inverse limit $\cdots \to X \to X$)
along the absolute Frobenius morphisms,
equipped with the natural $k$-algebra (respectively $k$-scheme) structure.

Let $\Loc \Alg / k$ be the category of perfections
of smooth group schemes over $k$
with group scheme morphisms.%
\footnote{
	As in \cite[Section 2.1, Footnote 3]{Suz22},
	the ``$\Loc$'' stands for ``locally.''
	In \cite[Section 1.2, Definition 2]{Ser60}
	(see also \cite[Section 1.4, Proposition 10]{Ser60}),
	Serre calls the perfection of an algebraic group
	a ``quasi-algebraic group.''
	By extension, a possible terminology for an object of our category $\Loc \Alg / k$
	is a ``locally quasi-algebraic group'' over $k$.
	Most group schemes over $k$ in this paper (and \cite{Suz22}) are perfect.
	Some group schemes over $k$ in \cite{Suz22} are pro-algebraic
	and not necessarily the perfections of group schemes locally of finite type.
	These leave us ``locally'' and ``algebraic'' (without ``quasi'')
	as the only significant adjectives here and in \cite{Suz22},
	whence the notation $\Loc \Alg / k$.
}
Let $\Alg / k \subset \Loc \Alg / k$ be the full (abelian) subcategory
of quasi-compact groups.
Let $\Loc \Alg_{\uc} / k \subset \Loc \Alg / k$ be the full subcategory
of groups with unipotent identity component.
Let $\Alg_{\uc} / k := \Alg / k \cap \Loc \Alg_{\uc} / k$.

A $k$-algebra $k'$ is said to be \emph{rational}
if it can be written as $k'_{1} \times \dots \times k'_{n}$,
where each $k'_{i}$ is the perfection
of a finitely generated extension field over $k$.
A $k$-algebra is said to be \emph{ind-rational}
if it can be written as a filtered direct limit of rational $k$-algebras.
Let $\Spec k^{\ind\rat}_{\pro\et}$ be
(the opposite of) the category of ind-rational $k$-algebras
with $k$-algebra homomorphisms equipped with the pro-\'etale topology.
We denote $\Ab(\Spec k^{\ind\rat}_{\pro\et}) =: \Ab(k^{\ind\rat}_{\pro\et})$
and $D^{\ast}(\Spec k^{\ind\rat}_{\pro\et}) =: D^{\ast}(k^{\ind\rat}_{\pro\et})$
with $\ast = $ (blank) or $b$.
Similarly, the cohomology functor
$R \Gamma(\Spec k', \var)$ on this site at $\Spec k'$
for any ind-rational $k$-algebra $k'$
is also denoted by $R \Gamma(k', \var)$.
The natural Yoneda functor $\Alg / k \to \Ab(k^{\ind\rat}_{\pro\et})$ is fully faithful exact
and induces a fully faithful embedding
$D^{b}(\Alg / k) \into D(k^{\ind\rat}_{\pro\et})$
(\cite[Proposition 2.3.4]{Suz20b}).
The bounded derived category of \'etale group schemes over $k$
is also a full subcategory of $D(k^{\ind\rat}_{\pro\et})$
(\cite[Proposition 2.3.4]{Suz20b}).
In particular, the natural Yoneda functor
$\Loc \Alg / k \to \Ab(k^{\ind\rat}_{\pro\et})$ is fully faithful exact.

Let $X$ be a geometrically connected, proper, smooth curve over $k$.
Let $U \subset X$ be a dense open subscheme.
Let $U_{\fppf}$ be the fppf site of $U$.
The cohomology functor $R \Gamma(U, \var)$ for $U$ in this paper
always means the cohomology of $U_{\fppf}$,
while the cohomology $R \Gamma(k, \var)$ for $k$ is
the cohomology of $\Spec k^{\ind\rat}_{\pro\et}$
as agreed above.

We recall the left exact functor
	\[
			\alg{\Gamma}(U, \var)
		\colon
			\Ab(U_{\fppf})
		\to
			\Ab(k^{\ind\rat}_{\pro\et})
	\]
defined in \cite[Section 2.7]{Suz20a}.
For $G \in \Ab(U_{\fppf})$,
define $\alg{\Gamma}(U, G) \in \Ab(k^{\ind\rat}_{\pro\et})$
to be the pro-\'etale sheafification of the presheaf
$k' \mapsto G(U \times_{k} k')$,
where $k'$ runs over ind-rational $k$-algebras.
Note that the presheaf $k' \mapsto G(U \times_{k} k')$ is an \'etale sheaf
since the fppf topology is finer than the \'etale topology.
It is not a pro-\'etale sheaf in general, however,
since the fppf topology is not finer than the pro-\'etale topology.
Thus the pro-\'etale sheafification in the definition of $\alg{\Gamma}(U, G)$
is necessary in general.
Varying $G$, this construction defines the functor $\alg{\Gamma}(U, \var)$.
Denote its $i$-th right derived functor by $\alg{H}^{i}(U, \var)$.

For any $G \in D(U_{\fppf})$, there is a canonical morphism
	\begin{equation} \label{0065}
			R \Gamma(U, G)
		\to
			R \Gamma(k, R \alg{\Gamma}(U, G))
	\end{equation}
in $D(\Ab)$ by \cite[Tag 05T2 (1)]{Sta26},
where $R \Gamma(U, \var)$ on the left-hand side is the fppf cohomology of $U$
and $R \Gamma(k, \var)$ on the left-hand side is the pro-\'etale cohomology of $k$
as agreed above.
This morphism is an isomorphism
if $G$ is a group scheme locally of finite type over $U$
(or a bounded complex thereof)
as proved in \cite[Proposition 2.7.8]{Suz20a} and the paragraph thereafter.
(The cited proposition says that under this assumption on $G$, the functor
$k' \mapsto H^{i}(U \times_{k} k', G)$
for any $i$ commutes with filtered direct limits
and hence its \'etale sheafification is already a pro-\'etale sheaf,
which is $\alg{H}^{i}(U, G)$.
This allows us to apply the general result
\cite[Tag 015M]{Sta26}
on composition of derived functors
to obtain \eqref{0065}.)

Let $v \in X_{0}$.
Let $\Spec \Order_{v, \fppf}$ be the fppf site of $\Order_{v}$.
Let $\Ab(\Order_{v, \fppf})$ be the category of sheaves of abelian groups
on $\Spec \Order_{v, \fppf}$.
We recall the left exact functor
	\[
			\alg{\Gamma}(\Order_{v}, \var)
		\colon
			\Ab(\Order_{v, \fppf})
		\to
			\Ab(k(v)^{\ind\rat}_{\pro\et})
	\]
defined in \cite[Section 2.5]{Suz20a}.%
\footnote{
	The notation for this functor in \cite[Section 2.5]{Suz20a} is actually
	$\alg{\Gamma}(\Hat{\Order}_{v}, \var)$
	(with a hat)
	and there is a ``henselian'' version
	$\alg{\Gamma}(\Order^{h}_{v}, \var)$.
	We do not explicitly need the henselian version in this paper.
	Therefore, we use the notation $\alg{\Gamma}(\Order_{v}, \var)$ for simplicity.
}
The Teichm\"uller section defines
a canonical $k(v)$-algebra structure on $\Order_{v}$.
For an ind-rational $k(v)$-algebra $k'$,
define
	\[
			\Order_{v} \ctensor_{k(v)} k'
		:=
			\invlim_{n}
				\bigl(
						(\Order_{v} / \ideal{p}_{v}^{n})
					\tensor_{k(v)}
						k'
				\bigr),
	\]
where $\ideal{p}_{v}$ is the maximal ideal of $\Order_{v}$.
For $G \in \Ab(\Order_{v, \fppf})$,
define $\alg{\Gamma}(\Order_{v}, G) \in \Ab(k(v)^{\ind\rat}_{\pro\et})$
to be the pro-\'etale sheafification of the presheaf
$k' \mapsto G(\Order_{v} \ctensor_{k(v)} k')$,
where $k'$ runs over ind-rational $k(v)$-algebras.
Varying $G$, this construction defines the functor $\alg{\Gamma}(\Order_{v}, \var)$.
Denote its $i$-th right derived functor by $\alg{H}^{i}(\Order_{v}, \var)$.

For any $G \in D(\Order_{v, \fppf})$, there is a canonical morphism
	\[
			R \Gamma(\Order_{v}, G)
		\to
			R \Gamma(k(v), R \alg{\Gamma}(\Order_{v}, G))
	\]
in $D(\Ab)$ by \cite[Tag 05T2 (1)]{Sta26}.
It is an isomorphism
if $G$ is a smooth group scheme or a finite flat group scheme
as shown in the paragraph before \cite[Proposition 2.5.2]{Suz20a}.
It is also an isomorphism
if $G$ is a separated, quasi-finite, flat group scheme
since such a group is an extension of an \'etale (hence smooth) group scheme
by a finite flat group scheme
(the finite part of $G$;
see the paragraph before \cite[Section 7.3, Proposition 3]{BLR90}).

Let $f \colon \Spec k(v) \to \Spec k$ be the natural morphism.
Let
	\[
			f
		\colon
			\Spec k(v)^{\ind\rat}_{\pro\et}
		\to
			\Spec k^{\ind\rat}_{\pro\et}
	\]
be the induced morphism of sites
(defined by the functor $k' \tensor_{k} k(v) \mapsfrom k'$
on the underlying categories).
Set
	\[
				\alg{\Gamma}(\Order_{v} / k, \var)
			:=
				f_{\ast}
				\alg{\Gamma}(\Order_{v}, \var)
		\colon
			\Ab(\Order_{v, \fppf})
		\to
			\Ab(k^{\ind\rat}_{\pro\et})
	\]
as in the end of \cite[Section 2.5]{Suz20a}.
Denote its $i$-th right derived functor by
$\alg{H}^{i}(\Order_{v} / k, \var)$.

Let $\Ab(K_{v, \fppf})$ be the category of sheaves on the fppf site of $K_{v}$.
Using $K_{v}$ in place of $\Order_{v}$
and $(\Order_{v} \ctensor_{k(v)} k') \tensor_{\Order_{v}} K_{v}$
in place of $\Order_{v} \ctensor_{k(v)} k'$,
we can similarly define left exact functors
	\[
			\alg{\Gamma}(K_{v}, \var)
		\colon
			\Ab(K_{v, \fppf})
		\to
			\Ab(k(v)^{\ind\rat}_{\pro\et}),
	\]
	\[
			\alg{\Gamma}(K_{v} / k, \var)
		\colon
			\Ab(K_{v, \fppf})
		\to
			\Ab(k^{\ind\rat}_{\pro\et}).
	\]

In the paragraph before \cite[Proposition 2.7.4]{Suz20a},
a certain full triangulated subcategory
$D(U_{\fppf})_{\ca} \subset D(U_{\fppf})$
(of objects satisfying ``cohomological approximation'') is defined.
Its definition is recalled in Remark \ref{0080} below,
but the details of the definition do not matter in this paper.
Here are what we need to know about $D(U_{\fppf})_{\ca}$:
By definition, it is \'etale-local,
namely, if $\{U_{i} \to U\}_{i}$ is an \'etale covering family of $U$
(with $U_{i}$ irreducible and separated)
and the restriction of $G$ to $U_{i, \fppf}$ belongs to
$D(U_{i, \fppf})_{\ca}$ for any $i$,
then $G \in D(U_{\fppf})_{\ca}$.
Furthermore, any smooth group scheme or finite flat group scheme over $U$
(or a bounded complex thereof)
belongs to $D(U_{\fppf})_{\ca}$
by \cite[Proposition 2.7.5]{Suz20a}.
This also applies to $G$ being a separated, quasi-finite, flat group scheme over $U$
since such a $G$ is \'etale locally
an extension of an \'etale group scheme
by a finite flat group scheme.

For $G \in D(U_{\fppf})$, let $R \alg{\Gamma}_{c}(U, G)$ be
the canonical mapping fiber of the natural restriction morphism
	\[
			R \alg{\Gamma}(U, G)
		\to
			\bigoplus_{v \in X \setminus U}
				R \alg{\Gamma}(K_{v} / k, G)
	\]
in $D(k^{\ind\rat}_{\pro\et})$.
See the paragraph after \cite[Proposition 2.7.2]{Suz20a}
for the details about how the mapping fiber is chosen canonically.
This defines a triangulated functor
	\[
			R \alg{\Gamma}_{c}(U, \var)
		\colon
			D(U_{\fppf})
		\to
			D(k^{\ind\rat}_{\pro\et})
	\]
and fits in a canonical distinguished triangle
	\[
			R \alg{\Gamma}_{c}(U, G)
		\to
			R \alg{\Gamma}(U, G)
		\to
			\bigoplus_{v \in X \setminus U}
				R \alg{\Gamma}(K_{v} / k, G)
	\]
in $D(k^{\ind\rat}_{\pro\et})$ functorial in $G$.
Denote $\alg{H}^{i}_{c}(U, \var) = H^{i} R \alg{\Gamma}_{c}(U, \var)$.

If $G \in D(X_{\fppf})_{\ca}$,
then there exists a canonical distinguished triangle
	\begin{equation} \label{0039}
			R \alg{\Gamma}_{c}(U, G)
		\to
			R \alg{\Gamma}(X, G)
		\to
			\bigoplus_{v \in X \setminus U}
				R \alg{\Gamma}(\Order_{v} / k, G)
	\end{equation}
in $D(k^{\ind\rat}_{\pro\et})$ functorial in $G$
by \cite[Proposition 2.7.4]{Suz20a}.
Applying $R \Gamma(k, \var)$ recovers
the compact support fppf cohomology distinguished triangle
	\[
			R \Gamma_{c}(U, G)
		\to
			R \Gamma(X, G)
		\to
			\bigoplus_{v \in X \setminus U}
				R \Gamma(\Order_{v}, G)
	\]
in $D(\Ab)$ given in
\cite[Chapter III, Proposition 0.4 (c), Remark 0.6 (b)]{Mil06}
(see also \cite[Proposition 2.1 (3)]{DH19}).

We will need the following result
to pass from the general base field case
to the finite base field case.

\begin{proposition} \label{0005}
	Assume that $k$ is finite with $q$ elements.
	\begin{enumerate}
		\item \label{0001}
			Let $G \in D^{b}(\Alg / k)$.
			Then $H^{i}(k, G)$ is finite for all $i$
			and zero for almost all $i$.
		\item \label{0002}
			Let $G \in D^{b}(\Alg_{\uc} / k)$.
			Then
				\[
						\prod_{i}
							(\# H^{i}(k, G))^{(-1)^{i}}
					=
						q^{
							\sum_{i}
								(-1)^{i}
								\dim H^{i}(G)
						}
				\]
			(where $H^{i}(G)$ is the $i$-th cohomology of $G$ as a complex).
	\end{enumerate}
\end{proposition}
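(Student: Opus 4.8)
The plan is to reduce both parts to standard finiteness and Euler-characteristic facts about Galois cohomology of finite and algebraic groups over a finite field, exploiting the structure of objects in $\Alg_{\uc}/k$ and the fact that $\Spec \F_{q}$ has cohomological dimension one. For part~\eqref{0001}, I would first reduce to the case where $G$ is a single group $G \in \Alg/k$ placed in degree zero, using the hypercohomology spectral sequence $H^{i}(k, H^{j}(G)) \Rightarrow H^{i+j}(k, G)$ together with the fact that $G$ is a bounded complex with each $H^{j}(G) \in \Alg/k$; the vanishing for almost all $i$ follows once we know each $H^{i}(k, H^{j}(G))$ vanishes for $|i|$ large. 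For a single $G = \hat{\mathbf{G}}$, the perfection of a smooth quasi-compact group scheme over $\F_{q}$, one has a connected--\'etale sequence $0 \to G^{\circ} \to G \to \pi_{0}(G) \to 0$ where $\pi_{0}(G)$ is a finite \'etale group; its cohomology is ordinary Galois cohomology of a finite module over $\Gal(\bar{\F}_{q}/\F_{q}) \cong \hat{\Z}$, which is finite and concentrated in degrees $0$ and $1$. For the connected part $G^{\circ}$, which is the perfection of a connected smooth commutative group, I would use its decomposition (up to isogeny, or via its structure) into an abelian variety part, a torus part and a unipotent part; in each case $H^{i}(\F_{q}, -)$ is finite for all $i$ and vanishes outside a bounded range (for tori and abelian varieties this is classical, e.g.\ $H^{1}(\F_{q}, -)$ finite and higher cohomology vanishing by cohomological dimension; for the unipotent perfection one argues via the filtration by $\Ga$'s, using $H^{i}(\F_{q}, \Ga) = 0$ for $i > 0$). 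Assembling these via the long exact sequences gives part~\eqref{0001}.

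For part~\eqref{0002}, the point is that the alternating sum of $\ord_{q}(\#H^{i}(k,G))$ is additive in distinguished triangles (all groups being finite by part~\eqref{0001}), and so is the right-hand side $\sum_i (-1)^i \dim H^i(G)$; hence by d\'evissage along the cohomology filtration of the complex $G$, it suffices to treat a single group $G \in \Alg_{\uc}/k$ in degree zero, and then by the connected--\'etale sequence and additivity, to treat separately the finite \'etale unipotent case and the connected unipotent case. In the connected case, $G$ is the perfection of a connected smooth commutative unipotent group over $\F_q$, which admits a finite filtration with successive quotients $\hat{\Ga}$; by additivity again we reduce to $G = \hat{\Ga} = \Ga$ itself, where $\dim H^0(G) = 1$ and the claim becomes $\ord_q(\#H^0(\F_q, \Ga)) = \ord_q(\# \F_q) = 1$ together with $H^i(\F_q, \Ga) = 0$ for $i > 0$. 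In the finite \'etale unipotent case, $G$ is a finite $p$-group with $\Gal(\bar\F_q/\F_q)$-action; here $\#H^0(\F_q, G) = \#H^1(\F_q, G)$ (the Euler characteristic of a finite module over $\hat\Z$ acting on a finite group is trivial, by the standard formula), so the alternating sum on the left is $0$, matching $\dim H^*(G) = 0$ since an \'etale group scheme has trivial Lie complex / zero-dimensional cohomology as a "coherent" object — more precisely $H^i(G) = 0$ as the relevant cohomology is that of the complex $G$ viewed appropriately, which vanishes for a finite group in the $\Alg_{uc}$ sense being considered.

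The main obstacle I anticipate is getting the bookkeeping exactly right for what "$\dim H^i(G)$" means on the right-hand side and ensuring the d\'evissage is legitimate: one must confirm that both sides are genuinely additive in short exact sequences of $\Alg_{uc}/k$ (for the left side this uses part~\eqref{0001} so that $\ord_q(\#(-))$ turns the long exact sequence into a telescoping identity), and that the base cases $\hat\Ga$ and finite \'etale $p$-groups give matching contributions. A secondary technical point is justifying the reduction from $D^b(\Alg_{uc}/k)$ to individual groups via the stupid/canonical filtration and checking that the Yoneda embedding into $D(k^{\ind\rat}_{\pro\et})$ is compatible with taking $R\Gamma(k,-)$, but this is guaranteed by the full faithfulness and exactness statements recalled from \cite{Suz20b} earlier in this section, so it should be routine. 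I would organize the write-up as: (i) additivity of both sides; (ii) the connected--\'etale d\'evissage; (iii) the two base cases; (iv) assembly, deducing part~\eqref{0001} along the way from the same d\'evissage.
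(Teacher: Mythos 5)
Your proposal is correct and takes essentially the same route as the paper: reduce to a single group, split along the connected--\'etale sequence, treat the finite \'etale part by the $F_{k}-1$ Euler-characteristic argument and the connected unipotent part by reduction to $\Ga$, with part (1) resting on Lang's theorem (your abelian-variety/torus/unipotent decomposition is an unnecessary detour, since Lang's theorem applies to the whole connected part at once). One small wording slip to fix: in the finite \'etale case the right-hand side vanishes because $\dim H^{0}(G)=\dim G=0$, not because ``$H^{i}(G)=0$'' --- the cohomology of the complex is the nonzero group $G$ itself, which merely has dimension zero.
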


\begin{proof}
	First, the cohomology here is a priori pro-\'etale cohomology
	$H^{i}(k_{\pro\et}, G)$ as agreed above.
	But for $G \in D^{b}(\Alg / k)$,
	pro-\'etale cohomology agrees with \'etale cohomology:
		\[
				H^{i}(k_{\pro\et}, G)
			\cong
				H^{i}(k_{\et}, G)
		\]
	by \cite[Proposition (2.1.2) (g)]{Suz20b}.
	If, moreover, $G$ is the perfection of a smooth group scheme $G_{0}$ over $k$,
	then $G(k') \cong G_{0}(k')$ for any \'etale $k$-algebra $k'$,
	so $H^{i}(k_{\et}, G) \cong H^{i}(k_{\et}, G_{0})$.
	These will allow us below to freely apply some classical results
	on Galois cohomology of smooth group schemes
	to understand $H^{i}(k, G)$.
	
	\eqref{0001}
	We may assume that $G \in \Alg / k$.
	Then $H^{0}(k, G) = G(k)$ and $H^{1}(k, G) = H^{1}(k, \pi_{0}(G))$ are finite
	and $H^{i}(k, G) = 0$ for $i \ge 2$ by Lang's theorem.
	
	\eqref{0002}
	Both sides of the desired equality are additive in distinguished triangles.
	Hence, we may assume that $G \in \Alg_{\uc} / k$.
	Let $F_{k} \colon G \to G$ be
	the $q$-th power Frobenius for $G$ over $k$.
	If $G$ is finite \'etale, then the exact sequence
		\[
				0
			\to
				H^{0}(k, G)
			\to
				G(\closure{k})
			\stackrel{F_{k} - 1}{\to}
				G(\closure{k})
			\to
				H^{1}(k, G)
			\to
				0
		\]
	shows that $H^{0}(k, G)$ and $H^{1}(k, G)$ have the same cardinality.
	With $H^{i}(k, G) = 0$ for $i \ne 0, 1$, we get the result in this case.
	
	If $G$ is connected (and hence unipotent),
	we may assume that $G = \Ga$.
	Then $\# H^{0}(k, G) = \# k = q$
	and $H^{i}(k, G) = 0$ for $i \ne 0$,
	which give the result.
\end{proof}

\begin{remark} \label{0080}
	Let $U \subset X$ be a dense open subscheme.
	We recall the definition of the full subcategory
	$D(U_{\fppf})_{\ca} \subset D(U_{\fppf})$
	from the paragraph before \cite[Proposition 2.7.4]{Suz20a}.
	It consists of objects $G \in D(U_{\fppf})$
	satisfying the property that the natural morphism
		\[
				R \Gamma_{v}(\Order_{v}^{h} \tensor^{h}_{k(v)} k', G)
			\to
				R \Gamma_{v}(\Order_{v} \ctensor_{k(v)} k', G)
		\]
	in $D(\Ab)$ is an isomorphism for any $v \in U_{0}$
	and any ind-rational w-contractible%
	\footnote{
		W-contractible means that
		any faithfully flat ind-\'etale ring homomorphism
		$k' \into k''$ admits a retract $k'' \onto k'$.
	}
	$k(v)$-algebra $k'$,
	where $\Order_{v}^{h}$ is the henselian local ring of $U$ at $v$
	with maximal ideal $\ideal{p}^{h}_{v}$ and fraction field $K_{v}^{h}$;
	$\Order_{v}^{h} \tensor^{h}_{k(v)} k'$ is the henselization of
	$\Order_{v}^{h} \tensor_{k(v)} k'$ with respect to the ideal
	$\ideal{p}^{h}_{v} \tensor_{k(v)} k'$;
	$R \Gamma_{v}(\Order_{v}^{h} \tensor^{h}_{k(v)} k', \var)$ denotes
	the fppf cohomology functor for $\Order_{v}^{h} \tensor^{h}_{k(v)} k'$
	with support corresponding to the ideal $\ideal{p}^{h}_{v} \tensor_{k(v)} k'$,
	namely, the canonical mapping fiber of the morphism
		\[
				R \Gamma(\Order_{v}^{h} \tensor^{h}_{k(v)} k', \var)
			\to
				R \Gamma \bigl(
						(\Order_{v}^{h} \tensor^{h}_{k(v)} k')
					\tensor_{\Order_{v}^{h}}
						K_{v}^{h},
					\var
				\bigr);
		\]
	and $R \Gamma_{v}(\Order_{v} \ctensor_{k(v)} k', \var)$ denotes
	a similarly defined functor for $\Order_{v} \ctensor_{k(v)} k'$.
	This property depends only on the restrictions of $G$
	to the fppf sites of the strict henselizations of
    $\Order_{v}^{h}$ for all $v \in U_{0}$.
\end{remark}


\section{Conductors of isogenies in the local case}
\label{0017}

In this section,
we will define a number that we call the ``conductor,'' $c(f)$,
for an isogeny $f$ between abelian varieties over local fields.
Such an isogeny does not extend to an isogeny on the N\'eron models
(in the sense of having finite kernel on the special fibers) in general.
Our conductor measures this defect.
The kernel of the morphism induced on the N\'eron models
is not finite, not even quasi-finite, in general.
We first extract a certain quasi-finite group scheme $\mathcal{N}_{f}$
from this non-quasi-finite group scheme.
With this, we define $c(f)$ and relate it to
the Liu--Lorenzini--Raynaud constructions \cite[Theorem 2.1 (b)]{LLR04}.
Both $\mathcal{N}_{f}$ and $c(f)$ contribute to the change in $\mu$-invariants
in the global situation.

Let $K$ be a complete discrete valuation field
with ring of integers $\Order_{K}$ and perfect residue field $k$ of positive characteristic.
Let $f \colon A_{1} \to A_{2}$ be an isogeny of abelian varieties over $K$
with kernel $N_{f}$.
We have an exact sequence
	\[
			0
		\to
			N_{f}
		\to
			A_{1}
		\to
			A_{2}
		\to
			0.
	\]
For $i = 1, 2$, let $\mathcal{A}_{i}$ be the N\'eron model over $\Order_{K}$ of $A_{i}$.
Then $f$ induces a morphism
$\Tilde{f} \colon \mathcal{A}_{1} \to \mathcal{A}_{2}$ of group schemes over $\Order_{K}$.
Let $\mathcal{N}_{f} \subset \mathcal{A}_{1}$ be
the schematic closure of $N_{f}$ in $\mathcal{A}_{1}$,
which is a separated, quasi-finite, flat closed group subscheme over $\Order_{K}$.
The quotient $\mathcal{A}_{2}' := \mathcal{A}_{1} / \mathcal{N}_{f}$
as an fppf sheaf over $\Order_{K}$
is representable by a smooth, separated group scheme of finite type
by \cite[Theorem 4.C]{Ana73}.
Since $\Tilde{f} \colon \mathcal{A}_{1} \to \mathcal{A}_{2}$ annihilates $\mathcal{N}_{f}$,
we have an induced morphism
$\mathcal{A}_{2}' \to \mathcal{A}_{2}$:
	\[
		\begin{CD}
				0
			@>>>
				\mathcal{N}_{f}
			@>>>
				\mathcal{A}_{1}
			@>>>
				\mathcal{A}_{2}'
			@>>>
				0
			\\ @. @. @. @VVV \\
			@. @. @. \mathcal{A}_{2}.
		\end{CD}
	\]
The morphism $\mathcal{A}_{2}' \to \mathcal{A}_{2}$ is
an isomorphism on the generic fibers.

\begin{definition}
	Define $c(f)$ to be the length of
	$(\Lie \mathcal{A}_{2}) / (\Lie \mathcal{A}_{2}')$
	as an $\Order_{K}$-module
	and call it the \emph{conductor} of the isogeny $f$.
\end{definition}

By \cite[Theorem 2.1 (b)]{LLR04}
(see also \cite[Proposition 2.4 and Remark 2.5]{OS26}),
there exists a canonical smooth group scheme $D = D_{f}$ of finite type over $k$
such that $D_{f}(\closure{k})$ is canonically isomorphic to the cokernel of
	$
			\mathcal{A}_{2}'(\Hat{\Order}_{K}^{\ur})
		\into
			\mathcal{A}_{2}(\Hat{\Order}_{K}^{\ur})
		=
			A_{2}(\Hat{K}^{\ur})
	$
as $\Gal(\closure{k} / k)$-modules
and we have $c(f) = \dim D_{f}$,
where $\Hat{\Order}_{K}^{\ur}$ is the completed maximal unramified extension of $\Order_{K}$
and $\Hat{K}^{\ur}$ its fraction field.
More explicitly,
$D_{f}$ is given by the cokernel of the morphism
$\Gr_{\Order_{K}} \mathcal{A}_{2}' \to \Gr_{\Order_{K}} \mathcal{A}_{2}$
induced on the infinite-level Greenberg transforms of $\mathcal{A}_{2}'$ and $\mathcal{A}_{2}$
(\cite[Section 14]{BGA18}, \cite[Remark 2.5]{OS26}):
	\begin{equation} \label{0011}
			\Gr_{\Order_{K}} \mathcal{A}_{2}'
		\to
			\Gr_{\Order_{K}} \mathcal{A}_{2}
		\to
			D_{f}
		\to
			0.
	\end{equation}
The kernel of the first morphism
$\Gr_{\Order_{K}} \mathcal{A}_{2}' \to \Gr_{\Order_{K}} \mathcal{A}_{2}$
is pro-infinitesimal (since it is injective on geometric points).
This kernel is zero if $K$ has equal characteristic
by the explicit description of Greenberg transforms
as Weil restrictions in this case.
In general, $D_{f}$ has unipotent identity component
since there is an isogeny $A_{2} \to A_{1}$
that composes with $f$ to give multiplication by $\deg(f)$
(\cite[Section 7.3, Lemma 5]{BLR90}) and hence
$\mathcal{A}_{1} \to \mathcal{A}_{2}' \to \mathcal{A}_{2}$
are isomorphisms up to bounded torsion.

Recall from \cite[Section 7.3, Definition 4]{BLR90} that
$\Tilde{f} \colon \mathcal{A}_{1} \to \mathcal{A}_{2}$ is called an isogeny
if the induced morphism $\mathcal{A}_{1, k} \to \mathcal{A}_{2, k}$ on the special fibers
is an isogeny (namely, has finite kernel).
The number $c(f)$ measures the defect of $\Tilde{f}$ being an isogeny:

\begin{proposition} \label{0000}
	We have $c(f) = 0$ if and only if
	$\Tilde{f} \colon \mathcal{A}_{1} \to \mathcal{A}_{2}$ is an isogeny,
	in which case its kernel is $\mathcal{N}_{f}$
	and the morphism $\mathcal{A}_{2}' \to \mathcal{A}_{2}$ is an open immersion.
\end{proposition}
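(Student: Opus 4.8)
The plan is to analyze the structure encoded in the sequence \eqref{0011} and the defining exact sequence $0 \to \mathcal{N}_{f} \to \mathcal{A}_{1} \to \mathcal{A}_{2}' \to 0$. First I would observe that $c(f) = \dim D_{f}$ vanishes if and only if the cokernel of $\Lie \mathcal{A}_{2}' \hookrightarrow \Lie \mathcal{A}_{2}$ has length zero, i.e. if and only if $\Lie \mathcal{A}_{2}' \to \Lie \mathcal{A}_{2}$ is an isomorphism (it is automatically injective since $\mathcal{A}_{2}' \to \mathcal{A}_{2}$ is an isomorphism on generic fibers, so has trivial kernel as a morphism of smooth $\Order_{K}$-group schemes). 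Since both $\mathcal{A}_{2}'$ and $\mathcal{A}_{2}$ are smooth of the same relative dimension (equal to $\dim A_{2}$), the morphism $\mathcal{A}_{2}' \to \mathcal{A}_{2}$ being an isomorphism on Lie algebras at the closed point is equivalent, by the infinitesimal criterion (Nakayama plus fiberwise smoothness), to $\mathcal{A}_{2}' \to \mathcal{A}_{2}$ being étale at the identity section of the special fiber, hence étale along the whole identity component of the special fiber by translation, hence an open immersion near the identity; combined with the group structure this will give that $\mathcal{A}_{2}' \to \mathcal{A}_{2}$ is an open immersion.

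Next I would address the claim about $\Tilde{f}$. The key point is that $\Tilde{f}$ factors as $\mathcal{A}_{1} \twoheadrightarrow \mathcal{A}_{2}' \to \mathcal{A}_{2}$ where the first map has kernel $\mathcal{N}_{f}$, which is quasi-finite (hence its special fiber is finite). So the special fiber $\mathcal{A}_{1,k} \to \mathcal{A}_{2,k}$ has finite kernel if and only if $\mathcal{A}_{2,k}' \to \mathcal{A}_{2,k}$ has finite kernel. But $\mathcal{A}_{2}' \to \mathcal{A}_{2}$ is an isomorphism on generic fibers and $\mathcal{A}_{2}'$ is smooth; if $c(f) = 0$ then by the previous paragraph it is an open immersion, so its kernel on the special fiber is trivial, hence $\Tilde{f}$ is an isogeny with kernel exactly $\mathcal{N}_{f}$. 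Conversely, if $c(f) \neq 0$, then $D_{f}$ is a positive-dimensional unipotent group, and I would use the description of $D_{f}$ as a quotient of Greenberg transforms — or more directly the length interpretation — to conclude that $\Lie \mathcal{A}_{2}' \to \Lie \mathcal{A}_{2}$ is not surjective, so $\mathcal{A}_{2}' \to \mathcal{A}_{2}$ is not étale at the identity; since it is a homomorphism of smooth group schemes that is generically an isomorphism, the failure of étaleness at the identity of the special fiber forces the kernel of the map on special fibers to be positive-dimensional (its Lie algebra being the cokernel's "dual" contribution), so $\mathcal{A}_{2,k}' \to \mathcal{A}_{2,k}$ does not have finite kernel, and hence neither does $\mathcal{A}_{1,k} \to \mathcal{A}_{2,k}$.

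I expect the main obstacle to be making the equivalence ``$\Lie$-cokernel has length zero $\iff$ kernel of the special-fiber map is finite'' fully precise, since $\mathcal{A}_{2}' \to \mathcal{A}_{2}$ need not be flat and its kernel as an fppf sheaf over $\Order_{K}$ can be subtle (it is nontrivial only because of what happens on the special fiber). The cleanest route is probably to argue entirely on the special fiber: $\mathcal{A}_{2,k}' \to \mathcal{A}_{2,k}$ is a homomorphism of smooth connected-up-to-finite-index $k$-group schemes of the same dimension, so it has finite kernel if and only if it is dominant, if and only if it is surjective on Lie algebras (since both are smooth of equal dimension), and this Lie-algebra surjectivity is exactly the condition $c(f) = 0$ via the length formula for $\dim D_{f}$. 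One then upgrades ``finite kernel on special fibers'' to ``open immersion on $\Order_{K}$'' using that a monomorphism of smooth group schemes which is an isomorphism on generic fibers and has trivial kernel on the special fiber is an open immersion (e.g. by \cite[Section 7.3, Proposition 3]{BLR90} or a direct application of Zariski's main theorem together with smoothness). I would then record that under $c(f) = 0$ the sequence $0 \to \mathcal{N}_{f} \to \mathcal{A}_{1} \to \mathcal{A}_{2}' \to 0$ with $\mathcal{A}_{2}' \hookrightarrow \mathcal{A}_{2}$ an open immersion identifies $\mathcal{N}_{f}$ with $\Ker \Tilde{f}$.
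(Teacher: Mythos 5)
The direction ``$c(f) = 0 \Rightarrow \Tilde{f}$ is an isogeny with kernel $\mathcal{N}_{f}$'' in your proposal is essentially the paper's argument (Lie isomorphism $\Rightarrow$ $\mathcal{A}_{2}' \to \mathcal{A}_{2}$ open immersion $\Rightarrow$ finite kernel on special fibers), and is fine modulo the routine details about \'etaleness and injectivity that you sketch. The problem is the other direction. Your proposed ``cleanest route'' rests on the chain ``finite kernel $\iff$ dominant $\iff$ surjective on Lie algebras'' for a homomorphism of smooth $k$-group schemes of the same dimension, and the second equivalence is false in characteristic $p$: the relative Frobenius of a smooth group over $k$ is faithfully flat with finite (infinitesimal) kernel, yet induces the zero map on Lie algebras. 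So from ``$\mathcal{A}_{2,k}' \to \mathcal{A}_{2,k}$ has finite kernel'' you cannot conclude that $\Lie \mathcal{A}_{2}' \to \Lie \mathcal{A}_{2}$ is surjective, and the implication ``$\Tilde{f}$ is an isogeny $\Rightarrow c(f) = 0$'' is not established; a priori the special-fibre map could be a purely inseparable isogeny, giving finite kernel together with $c(f) > 0$, and nothing in your argument excludes this.

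What rules this scenario out is an input your special-fibre argument never uses, and it is exactly how the paper proves this direction: if $\Tilde{f}$ is an isogeny, then $\Ker(\Tilde{f})$ is a quasi-finite flat separated closed subgroup scheme of $\mathcal{A}_{1}$, hence (being flat and separated over $\Order_{K}$) it coincides with the schematic closure of its generic fibre, i.e.\ $\Ker(\Tilde{f}) = \mathcal{N}_{f}$. Consequently $\mathcal{A}_{2}' = \mathcal{A}_{1}/\mathcal{N}_{f} \to \mathcal{A}_{2}$ is a monomorphism; a monomorphism of smooth group schemes over $\Order_{K}$ which is an isomorphism on generic fibres is an open immersion, and then $\Lie \mathcal{A}_{2}' \isomto \Lie \mathcal{A}_{2}$ gives $c(f) = 0$. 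In short: the missing idea is the identification $\Ker(\Tilde{f}) = \mathcal{N}_{f}$ via schematic closure, which upgrades $\mathcal{A}_{2}' \to \mathcal{A}_{2}$ from ``generically an isomorphism'' to ``a monomorphism'' and thereby excludes the Frobenius-type behaviour that defeats a purely Lie-algebra-theoretic criterion.
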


\begin{proof}
	This is a special case of
	\cite[Lemmas B.4 and B.7]{Ces16}
	(which are stated for N\'eron models
	but whose proofs work for non-N\'eron models
	such as $\mathcal{A}_{2}'$).
\end{proof}

Note that $\Tilde{f} \colon \mathcal{A}_{1} \to \mathcal{A}_{2}$ is an isogeny
if $A_{1}$ has semistable reduction or
the degree of $f \colon A_{1} \to A_{2}$ is prime to the characteristic of $k$
(\cite[Section 7.3, Proposition 6]{BLR90}).
The converse holds if $f$ is multiplication by a natural number on $A_{1} = A_{2}$
(\cite[Chapter III, Corollary C.9]{Mil06}).
This gives examples with $c(f) \ne 0$.

We also recall Chai's base change conductors.
Let $A$ be an abelian variety over $K$.
Let $L$ be a finite (not necessarily separable) extension of $K$
with ring of integers $\Order_{L}$ and ramification index $e_{L / K}$
over which $A$ has semistable reduction.
Let $\mathcal{A}$ be the N\'eron model over $\Order_{K}$ of $A$
and $\mathcal{A}_{L}$ the N\'eron model
over $\Order_{L}$ of $A \times_{K} L$.

\begin{definition}[{\cite[Section 2.4]{Cha00}}] \label{0058}
	Define a rational number $c(A)$ by
		\[
				c(A)
			=
				\frac{1}{e_{L / K}}
				\length_{\Order_{L}}
					\frac{
						\Lie \mathcal{A}_{L}
					}{
						\Lie \mathcal{A} \tensor_{\Order_{K}} \Order_{L}
					},
		\]
	and call it the \emph{base change conductor} of $A$.
\end{definition}

This is independent of the choice of $L$
as mentioned in \cite[Section 2.4]{Cha00}
(use \cite[Section 7.4, Corollary 4]{BLR90}).
We only need a separable $L / K$ in this paper.


\section{Conductors of isogenies in the global case}
\label{0018}

We will globalize the constructions of the previous section.

Let $X$ be a quasi-compact, irreducible, regular scheme of dimension $1$
with perfect residue fields of positive characteristic at closed points.
Let $K$ be its function field.
Let $f \colon A_{1} \to A_{2}$ be an isogeny of abelian varieties over $K$.
For any $v \in X_{0}$,
it gives an isogeny $f \colon A_{1} \times_{K} K_{v} \to A_{2} \times_{K} K_{v}$
over the local field $K_{v}$ at $v$.
Applying the constructions in the previous section,
we have the corresponding group scheme $D_{f, v}$
and the conductor $c_{v}(f) = \dim D_{f, v}$.
Since $A_{1}$ has semistable reduction almost everywhere,
we know that $c_{v}(f) = 0$ for almost all $v$
by Proposition \ref{0000} and the paragraph thereafter.

\begin{definition} \label{0048} \mbox{}
	\begin{enumerate}
	\item
		Define a divisor on $X$ by $\mathfrak{c}(f) = \sum_{v \in X_{0}} c_{v}(f) v$
		and call it the \emph{conductor divisor} of $f$.
	\item
		When $X$ is a geometrically connected, proper, smooth curve over a perfect field $k$,
		define $c(f)$ to be the degree of $\mathfrak{c}(f)$ (with respect to $k$)
		and call it the \emph{conductor} of $f$.
	\end{enumerate}
\end{definition}

Here the degree of a single closed point $v \in X_{0}$ for example
in the second definition is not $1$ but $[k(v) : k]$
and hence it depends on $k$.

Let $N_{f}$ be the kernel of $f$.
Let $\mathcal{A}_{i}$ be the N\'eron model over $X$ of $A_{i}$.
Let $\Tilde{f} \colon \mathcal{A}_{1} \to \mathcal{A}_{2}$ be
the induced morphism.
Let $\mathcal{N}_{f} \subset \mathcal{A}_{1}$ be the schematic closure of $N_{f}$,
which is a separated, quasi-finite, flat closed group subscheme over $X$.
The quotient $\mathcal{A}_{2}' := \mathcal{A}_{1} / \mathcal{N}_{f}$
as an fppf sheaf over $X$
is representable by a smooth, separated group scheme of finite type
by \cite[Theorem 4.C]{Ana73}.
We have an induced morphism $\mathcal{A}_{2}' \to \mathcal{A}_{2}$:
	\begin{equation} \label{0008}
		\begin{CD}
				0
			@>>>
				\mathcal{N}_{f}
			@>>>
				\mathcal{A}_{1}
			@>>>
				\mathcal{A}_{2}'
			@>>>
				0
			\\ @. @. @. @VVV \\
			@. @. @. \mathcal{A}_{2}.
		\end{CD}
	\end{equation}

\begin{proposition} \label{0010}
	We have $\mathfrak{c}(f) = 0$ if and only if
	$\Tilde{f} \colon \mathcal{A}_{1} \to \mathcal{A}_{2}$ is an isogeny,
	in which case its kernel is $\mathcal{N}_{f}$
	and the morphism $\mathcal{A}_{2}' \to \mathcal{A}_{2}$ is an open immersion.
\end{proposition}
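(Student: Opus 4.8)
The plan is to reduce the proposition to the local Proposition~\ref{0000}, applied at each closed point $v \in X_{0}$, since all three assertions are local in nature: $\mathfrak{c}(f) = 0$ means $c_{v}(f) = 0$ for every $v$; by \cite[Section~7.3, Definition~4]{BLR90} the map $\Tilde{f}$ is an isogeny exactly when $\mathcal{A}_{1, k(v)} \to \mathcal{A}_{2, k(v)}$ has finite kernel for every $v$ (the generic fibre $f$ is already an isogeny); and both $\Ker(\Tilde{f}) = \mathcal{N}_{f}$ and ``$\mathcal{A}_{2}' \to \mathcal{A}_{2}$ is an open immersion'' can be tested locally on $X$. The one preliminary point is that all the objects in play are compatible with the base change $\Spec \Order_{v} \to X$ (where $\Order_{v}$ is the completed local ring at $v$): the formation of the N\'eron model commutes with localization and with completion of a discrete valuation ring, so $\mathcal{A}_{i} \times_{X} \Spec \Order_{v}$ is the N\'eron model over $\Order_{v}$ of $A_{i} \times_{K} K_{v}$; the schematic closure $\mathcal{N}_{f}$ commutes with the flat morphism $\Spec \Order_{v} \to X$; and the fppf quotient $\mathcal{A}_{2}' = \mathcal{A}_{1}/\mathcal{N}_{f}$ of \cite[Theorem~4.C]{Ana73} commutes with arbitrary base change, its representability being preserved, so that $\mathcal{A}_{2}' \to \mathcal{A}_{2}$ and $\Tilde{f}$ base change to the local objects of Section~\ref{0017}. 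In particular $c_{v}(f)$ is the conductor of the local isogeny $A_{1} \times_{K} K_{v} \to A_{2} \times_{K} K_{v}$.

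Granting this, the first equivalence is immediate: by Proposition~\ref{0000} over $\Order_{v}$, the special-fibre map $\mathcal{A}_{1, k(v)} \to \mathcal{A}_{2, k(v)}$ has finite kernel if and only if $c_{v}(f) = 0$, and running over all $v \in X_{0}$ gives that $\Tilde{f}$ is an isogeny if and only if $\mathfrak{c}(f) = 0$.

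Suppose now $\mathfrak{c}(f) = 0$. I would first show $\Ker(\Tilde{f}) = \mathcal{N}_{f}$ as closed subschemes of $\mathcal{A}_{1}$. The inclusion $\mathcal{N}_{f} \subseteq \Ker(\Tilde{f})$ holds because $\Tilde{f}$ kills $\mathcal{N}_{f}$: it does so on the schematically dense generic fibre $N_{f}$ of the flat scheme $\mathcal{N}_{f}$, and $\mathcal{A}_{2}$ is separated. For equality it suffices to compare the two ideal sheaves stalk by stalk; over the generic point of $X$ both cut out $N_{f}$, and at a closed point $v$ one passes to the faithfully flat extension $\Order_{X, v} \to \Order_{v}$ and applies Proposition~\ref{0000} with the compatibilities above, which yield $\Ker(\Tilde{f}) \times_{X} \Spec \Order_{v} = \mathcal{N}_{f} \times_{X} \Spec \Order_{v}$. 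Hence $\Ker(\Tilde{f}) = \mathcal{N}_{f}$, so $\mathcal{A}_{2}' = \mathcal{A}_{1}/\Ker(\Tilde{f}) \to \mathcal{A}_{2}$ is a monomorphism of $X$-schemes locally of finite presentation (its kernel as an fppf sheaf is $\Ker(\Tilde{f})/\mathcal{N}_{f} = 0$). Finally this monomorphism is flat: since $\mathcal{A}_{2}'$ and $\mathcal{A}_{2}$ are both flat over $X$, by the fibral flatness criterion it is enough to check flatness of its fibres over $X$, and over the generic point it is an isomorphism while over a closed point $v$ it is the special fibre of the open immersion $\mathcal{A}_{2}' \to \mathcal{A}_{2}$ over $\Order_{v}$ furnished by Proposition~\ref{0000}. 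A flat monomorphism locally of finite presentation is an open immersion, which finishes the proof.

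The step demanding the most care is the package of base-change compatibilities in the first paragraph --- especially that the N\'eron model and the quotient of \cite{Ana73} commute with passage to the completed local rings $\Order_{v}$ --- since the entire argument consists in transporting Proposition~\ref{0000} back to $X$ along the faithfully flat maps $\Order_{X, v} \to \Order_{v}$, and any gap there would break the identification of $c_{v}(f)$, of $\Ker(\Tilde{f})$, and of the open-immersion statement. By contrast, deducing the open immersion globally is handled cleanly by the criterion ``flat monomorphism locally of finite presentation $\Rightarrow$ open immersion'', which sidesteps any attempt to descend open immersions along the (not quasi-compact, hence not obviously fpqc) family $\{\Spec \Order_{v}\}_{v} \cup \{\Spec K\}$.
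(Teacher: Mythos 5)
Your proof is correct and follows essentially the same route as the paper, whose proof consists precisely of reducing to the local statement, Proposition~\ref{0000}; you simply spell out the base-change compatibilities (Néron models, schematic closures, and the quotient of \cite{Ana73} under $\Order_{X,v} \to \Order_{v}$) and the globalization of the kernel and open-immersion claims that the paper leaves implicit. The use of the fibral flatness criterion together with ``flat, locally finitely presented monomorphism $\Rightarrow$ open immersion'' is a clean way to carry out that implicit globalization.
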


\begin{proof}
	This follows from the corresponding local statement,
	Proposition \ref{0000}.
\end{proof}

We also globalize base change conductors.
Let $A$ be an abelian variety over $K$.
For each $v \in X_{0}$,
we have the base change conductor $c_{v}(A)$ of $A \times_{K} K_{v}$.

\begin{definition} \label{0061} \mbox{}
	\begin{enumerate}
	\item \label{0059}
		Define a $\Q$-divisor on $X$ by
		$\mathfrak{c}(A) = \sum_{v \in X_{0}} c_{v}(A) v$
		and call it the \emph{base change conductor divisor} of $A$.
	\item \label{0060}
		When $X$ is a geometrically connected, proper, smooth curve over a perfect field $k$,
		define $c(A)$ to be the degree of $\mathfrak{c}(A)$ (with respect to $k$)
		and call it the \emph{base change conductor} of $A$.
	\end{enumerate}
\end{definition}

\begin{remark}
	The basic idea in the constructions in this and the previous sections is
	to imagine to have a kind of ``exact sequence''
		\[
				\text{`` }
				0
			\to
				\mathcal{N}_{f}
			\to
				\mathcal{A}_{1}
			\to
				\mathcal{A}_{2}
			\to
				\bigoplus_{v \in X_{0}}
					i_{v, \ast} D_{f, v}
			\to
				0
				\text{ ''},
		\]
	where $i_{v} \colon \Spec k(v) \into X$ is the inclusion.
	It is not really clear in what category this supposed exact sequence actually lives,
	since $D_{f, v}$ is not an \'etale scheme
	and $\mathcal{A}_{2}' \to \mathcal{A}_{2}$ is not injective
	on the special fibers in general.
	This sequence is trying to decompose
	the information about $\mathcal{A}_{1} \to \mathcal{A}_{2}$
	into the information about $\mathcal{N}_{f}$ and $D_{f, v}$.
	This idea works out in practice,
	as we will see in the rest of the paper.
\end{remark}


\section{%
	\texorpdfstring{Change in $\mu$ by flat Euler characteristics: general base field case}
	{Change in mu by flat Euler characteristics: general base field case}
}
\label{0037}

Let $X$ be a geometrically connected, proper, smooth curve
over a perfect field $k$ of characteristic $p > 0$.
Let $K$ be its function field.
In this and the next sections,
we will formulate and prove a formula for the change in $\mu$-invariants under isogeny
in terms of certain Euler characteristics in the fppf topology.
This section treats the version of the statement
for a general, not necessarily finite base field $k$.

For an fppf sheaf $\mathcal{F}$ on $X$,
if $\alg{H}^{i}(X, \mathcal{F}) \in \Loc \Alg / k$ for all $i$
and $\alg{H}^{i}(X, \mathcal{F}) = 0$ for almost all $i$,
then we define
	\[
			\chi^{0}(X, \mathcal{F})
		:=
			\sum_{i}
				(-1)^{i}
				\dim \alg{H}^{i}(X, \mathcal{F}).
	\]
(The notation $0$ indicates that only the identity components matter.)
If $\mathcal{F}'$ and $\mathcal{F}''$ are other such sheaves
and $0 \to \mathcal{F}' \to \mathcal{F} \to \mathcal{F}'' \to 0$
is an exact sequence,
then%
\footnote{
	The citation ``[GRS24, (6.1)]'' before \cite[Proposition 2.4]{Suz25}
	refers to an older, preprint version of this paper
	and corresponds to \eqref{0036} below.
}
	\begin{equation} \label{0067}
			\chi^{0}(X, \mathcal{F})
		=
			\chi^{0}(X, \mathcal{F}') + \chi^{0}(X, \mathcal{F}'').
	\end{equation}
For an abelian variety $A$ over $K$ with N\'eron model $\mathcal{A}$ over $X$,
we have $\alg{H}^{i}(X, \mathcal{A}) \in \Loc \Alg / k$ for all $i$
and $\alg{H}^{i}(X, \mathcal{A}) = 0$ for $i \ne 0, 1, 2$
by \cite[Theorem 3.4.1 (1)]{Suz20a}.
Define
	\begin{equation} \label{0036}
			\mu_{A / K}
		:=
			\dim \alg{H}^{1}(X, \mathcal{A})
	\end{equation}
as in the second paragraph of \cite[Section 6.2]{LLSTT21}.
Note that the classical definition of the $\mu$-invariant does not make sense
for a general perfect (not necessarily finite) $k$.
We will discuss the finite base field case in the next section.

\begin{proposition} \label{0064}
	Let $U \subset X$ be a dense open subscheme.
	Let $\mathcal{N}$ be a quasi-compact, \'etale group scheme over $U$.
	Let $\pi \colon U_{\et} \to \Spec k_{\et}$ be the structure morphism.
	Then $R \alg{\Gamma}_{c}(U, \mathcal{N})$ is
	a bounded complex of finite \'etale group schemes over $k$
	canonically isomorphic to $R \pi_{!} \mathcal{N}$.
\end{proposition}

\begin{proof}
	Let $\Tilde{\mathcal{N}}$ be the extension-by-zero of $\mathcal{N}$ over $X$.
	We have a distinguished triangle
		\[
				R \alg{\Gamma}_{c}(U, \mathcal{N})
			\to
				R \alg{\Gamma}(X, \Tilde{\mathcal{N}})
			\to
				\bigoplus_{v \in X \setminus U}
					R \alg{\Gamma}(\Order_{v} / k, \Tilde{\mathcal{N}})
		\]
	in $D(k^{\ind\rat}_{\pro\et})$ by \eqref{0039}.
	For any $v \in X \setminus U$,
	we have $R \alg{\Gamma}(\Order_{v}, \Tilde{\mathcal{N}}) = 0$
	by \cite[Proposition (5.2.3.4)]{Suz20b},
	hence $R \alg{\Gamma}(\Order_{v} / k, \Tilde{\mathcal{N}}) = 0$.
	Therefore, we have an isomorphism
		\[
				R \alg{\Gamma}_{c}(U, \mathcal{N})
			\isomto
				R \alg{\Gamma}(X, \Tilde{\mathcal{N}}).
		\]
	On the other hand, we have
	$R \pi_{!} \mathcal{N} = R \Tilde{\pi}_{\ast} \Tilde{\mathcal{N}}$
	by definition,
	where $\Tilde{\pi} \colon X_{\et} \to \Spec k_{\et}$ is
	the structure morphism for $X$.
	Replacing $\mathcal{N}$ by $\Tilde{\mathcal{N}}$,
	we may assume that $U = X$.
	
	Let $\Spec k_{\Et}$ and $X_{\Et}$ be
	the big \'etale sites of $k$ and $X$, respectively.
	Let
		\[
				\alg{\Gamma}(U_{\et}, \var)
			\colon
				\Ab(X_{\Et})
			\to
				\Ab(k^{\ind\rat}_{\pro\et})
		\]
	be the functor sending $G \in \Ab(U_{\Et})$
	to the pro-\'etale sheafification of the presheaf
	$k' \mapsto G(X \times_{k} k')$.
	We have a natural morphism
		\begin{equation} \label{0063}
				R \alg{\Gamma}(X_{\et}, \mathcal{N})
			\to
				R \alg{\Gamma}(X, \mathcal{N})
		\end{equation}
	in $D(k^{\ind\rat}_{\pro\et})$.
	For any integer $i$, the $i$-th cohomology object of the left-hand (resp.\ right-hand) side
	is the pro-\'etale sheafification of the presheaf
	that sends an ind-rational $k$-algebra $k'$ to
	the \'etale (resp.\ fppf) cohomology of $X \times_{k} k'$
	with coefficients in $\mathcal{N}$.
	Since the fppf cohomology with coefficients in a smooth group scheme
	agrees with the \'etale cohomology
	by \cite[Theorem (11.7) (1)]{Gro68},
	we know that \eqref{0063} is an isomorphism.
	Let $\pi_{\Et} \colon X_{\Et} \to \Spec k_{\Et}$ be
	the natural morphism of sites
	(defined by the functor $X \times_{k} Z \mapsfrom Z$).
	Then $\alg{\Gamma}(U_{\et}, \var)$ is the pro-\'etale sheafification
	of the restriction of $\pi_{\Et, \ast}$ to the category of ind-rational $k$-algebras.
	Hence, it is enough to show that
	$R \pi_{\Et, \ast} \mathcal{N}$ is
	a bounded complex of finite \'etale group schemes over $k$
	canonically isomorphic to $R \pi_{\ast} \mathcal{N}$.
	But this is a consequence of the proper base change theorem
	(\cite[Tags 0DDJ (2) and 0GL0]{Sta26}).
\end{proof}

\begin{proposition} \label{0038}
	For any separated, quasi-finite, flat group scheme $\mathcal{N}$ over $X$,
	the object $\alg{H}^{i}(X, \mathcal{N})$ is in $\Alg_{\uc} / k$
	for all $i$ and zero for almost all $i$.
\end{proposition}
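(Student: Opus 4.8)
The plan is to reduce the statement to the known structural results about $\alg{H}^i(X, \var)$ for smooth and for finite flat group schemes, using the fact that a quasi-finite flat separated group scheme over $X$ sits in a short exact sequence. Concretely, I would first recall that $\mathcal{N}$ decomposes étale-locally (and in fact globally, after possibly passing to the maximal finite flat subscheme) into a finite flat part $\mathcal{N}^{\mathrm{f}}$ and an étale quotient $\mathcal{N}^{\mathrm{et}} = \mathcal{N} / \mathcal{N}^{\mathrm{f}}$, giving a short exact sequence $0 \to \mathcal{N}^{\mathrm{f}} \to \mathcal{N} \to \mathcal{N}^{\mathrm{et}} \to 0$ of fppf sheaves on $X$ (this is exactly the structure invoked in the excerpt, citing \cite[Section 7.3]{BLR90}). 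Applying $R\alg{\Gamma}(X, \var)$ yields a long exact sequence relating the $\alg{H}^i(X, \var)$ of the three sheaves, so it suffices to prove the claim separately for $\mathcal{N}$ finite flat and for $\mathcal{N}$ finite étale, since $\Alg_{\uc}/k$ is closed under extensions, kernels, and cokernels inside $\Loc\Alg/k$.

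Next I would handle the two cases. For $\mathcal{N}$ finite étale over $X$: the cohomology $\alg{H}^i(X, \mathcal{N})$ should be representable by finite étale group schemes over $k$ (hence in $\Alg_{\uc}/k$, whose identity component is trivial, so certainly unipotent), and they vanish outside $i = 0, 1, 2$ by cohomological dimension of the curve; here one uses the results of \cite{Suz20a} identifying $R\alg{\Gamma}(X, \var)$ on such sheaves. For $\mathcal{N}$ finite flat over $X$: one uses \cite[Theorem 3.4.1]{Suz20a} (or the surrounding structural results in \cite{Suz20a}) to see that $\alg{H}^i(X, \mathcal{N}) \in \Loc\Alg/k$, is zero for almost all $i$ — say outside $i \in \{0,1,2\}$ by the $2$-dimensional cohomological bound for fppf cohomology of finite flat group schemes on a curve — and finally that each is \emph{quasi-compact} and has \emph{unipotent identity component}. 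Quasi-compactness should follow because $\mathcal{N}$ is a finite scheme over $X$, so $R\alg{\Gamma}(X, \mathcal{N})$ is built from the coherent cohomology of a finite $\Order_X$-algebra; unipotence of the identity component should follow from the fact that $\mathcal{N}$, being finite flat over a curve in characteristic $p$, has connected-unipotent-type cohomology (e.g.\ via the filtration by $\mathcal{N}[p^\infty]$-type pieces, or because $H^i$ of a finite flat group scheme on a proper curve over $k$ is built out of $\Ga$- and $\mu_p$- and étale-type contributions, and the multiplicative and étale contributions die or become unipotent over the proper curve — this is the delicate point).

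The step I expect to be the main obstacle is verifying that the identity component of $\alg{H}^i(X, \mathcal{N})$ is unipotent (not merely that the object lies in $\Loc\Alg/k$). This is where one genuinely uses that $X$ is a \emph{proper} curve: for instance, $\alg{H}^0(X, \Gm) = k^\times$ contributes a torus, but for a finite flat $\mathcal{N}$ the relevant cohomology does not see such a torus because $\mathcal{N}$ is torsion/finite; concretely one argues that after a finite extension killing the Galois action, $\mathcal{N}$ becomes an iterated extension of $\Z/p^n$, $\mu_{p^n}$, and prime-to-$p$ constant groups, that $\alg{H}^\bullet(X, \mu_{p^n})$ fits in the Kummer sequence $0 \to \Pic(X)/p^n \to \alg{H}^1(X,\mu_{p^n}) \to \Pic(X)[p^n] \to 0$-type description (which over a proper curve gives a finite étale plus unipotent/$\Ga$-type piece, no torus), and that the prime-to-$p$ pieces give finite étale groups. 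I would also double-check the vanishing range using \cite[Theorem 3.4.1]{Suz20a} together with the short exact sequence, to justify "zero for almost all $i$" uniformly. Finally, for the precise assertion in the excerpt — that this holds for \emph{every} quasi-finite flat separated $\mathcal{N}$, not just finite flat ones — I would note that the passage from $\mathcal{N}$ to its maximal finite flat closed subscheme and étale quotient is the device that bridges the gap, and this is legitimate because the excerpt already records (citing \cite{BLR90}) that quasi-finite flat separated group schemes over a Dedekind-like base are precisely such extensions.
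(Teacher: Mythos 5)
The decisive step in your plan---a \emph{global} short exact sequence $0 \to \mathcal{N}^{\mathrm{f}} \to \mathcal{N} \to \mathcal{N}^{\mathrm{et}} \to 0$ over $X$ with $\mathcal{N}^{\mathrm{f}}$ finite flat and $\mathcal{N}^{\mathrm{et}}$ \'etale---does not exist in general, and this is a genuine gap. The ``finite part'' from \cite[Section 7.3]{BLR90} is a construction over a henselian (e.g.\ complete) local base; the paper invokes it only over the rings $\Order_{v}$, and accordingly says that a quasi-finite flat separated group scheme is \emph{\'etale locally} such an extension. Globally the two requirements you impose (the subgroup is finite over all of $X$; the quotient is \'etale over all of $X$) pull in opposite directions at different places. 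For a concrete obstruction, take $\mathcal{N}$ to be the schematic closure of $E[p]$ in the N\'eron model of a generically ordinary elliptic curve with a supersingular place $v$ of good reduction and a multiplicative place $w$ at which the closure fails to be finite: any candidate $\mathcal{N}^{\mathrm{f}}$ must contain the closure of $\mu_{p} \subset E[p]$ (else the quotient is not generically \'etale), but then at $v$ the quotient has connected, non-\'etale special fiber, while enlarging $\mathcal{N}^{\mathrm{f}}$ to all of $\mathcal{N}$ is impossible because $\mathcal{N}$ is not finite at $w$. So your d\'evissage only covers finite $\mathcal{N}$ (where the statement is exactly \cite[Theorem 3.1.3]{Suz20a}; note that Theorem 3.4.1 of that paper concerns N\'eron models, not finite flat group schemes), and it is precisely the genuinely quasi-finite $\mathcal{N}$, such as $\mathcal{N}_{f}$ at bad places, that the proposition is needed for. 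Retreating to the \'etale-local decomposition does not save the argument either, since you then have no long exact sequence for $R\alg{\Gamma}(X, \var)$ without some local-to-global mechanism.

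The paper supplies exactly such a mechanism. It first quotients by the schematic closure $\mathcal{N}^{0}$ of the identity component of the generic fiber, which is finite over $X$ because that component is infinitesimal; this reduces to the case of \'etale generic fiber (note: \'etale generic fiber, not \'etale over $X$). It then chooses a dense open $U$ where $\mathcal{N}$ is \'etale and uses the compact-support triangle $R\alg{\Gamma}_{c}(U, \mathcal{N}) \to R\alg{\Gamma}(X, \mathcal{N}) \to \bigoplus_{v \in X \setminus U} R\alg{\Gamma}(\Order_{v}/k, \mathcal{N})$: the first term is a bounded complex of finite \'etale groups (it is $R\pi_{!}\mathcal{N}$ in \'etale cohomology), and each local term may be computed after replacing $\mathcal{N}$ by its finite part over the complete local ring $\Order_{v}$ (legitimate there) and lies in $D^{b}(\Alg_{\uc}/k)$ by \cite[Proposition 3.7]{OS23}. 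By contrast, the point you flagged as delicate---unipotence of the identity components in the finite flat case---requires no new argument, as it is part of the cited result \cite[Theorem 3.1.3]{Suz20a}.
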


\begin{proof}
	Assume first that $\mathcal{N}$ is finite over $X$.
	Let $\Pro \Alg_{\uc} / k$ and $\Ind \Alg_{\uc} / k$ be
	the pro-category and the ind-category, respectively,
	of $\Alg_{\uc} / k$.
	Let $\Ind \Pro \Alg_{\uc} / k$ be the ind-category of $\Pro \Alg_{\uc} / k$.
	The Yoneda functor
	$\Ind \Pro \Alg_{\uc} / k \to \Ab(k^{\ind\rat}_{\pro\et})$
	is fully faithful by \cite[Proposition (2.3.4)]{Suz20b}.
	By \cite[Theorem 3.1.3]{Suz20a},
	we know that $\alg{H}^{i}(X, \mathcal{N})$ is in
	both $\Ind \Alg_{\uc} / k$ and $\Pro \Alg_{\uc} / k$ for all $i$
	and zero for almost all $i$.
	The intersection of these categories in $\Ind \Pro \Alg_{\uc} / k$
	is $\Alg_{\uc} / k$.
	Hence, $\alg{H}^{i}(X, \mathcal{N})$ is in $\Alg_{\uc} / k$ in this case.
	
	In general, let $N$ be the generic fiber of $\mathcal{N}$.
	Let $N^{0} \subset N$ be the identity component of $N$.
	Let $\mathcal{N}^{0} \subset \mathcal{N}$ be
	the schematic closure of $N^{0}$ in $\mathcal{N}$.
	For any $v \in X_{0}$,
	the finite part of $\mathcal{N}^{0} \times_{X} \Spec \Order_{v}$
	is an open group subscheme over $\Order_{v}$.
	Since $\mathcal{N}^{0} \times_{X} \Spec \Order_{v}$ has infinitesimal fibers,
	this open group subscheme has to be
	the whole $\mathcal{N}^{0} \times_{X} \Spec \Order_{v}$.
	Hence, $\mathcal{N}^{0}$ is finite over $X$.
	Therefore, the finite case implies that we may assume $N$ to be \'etale.
	Let $U \subset X$ be a dense open subscheme where $\mathcal{N}$ is \'etale.
	We have a distinguished triangle
		\[
				R \alg{\Gamma}_{c}(U, \mathcal{N})
			\to
				R \alg{\Gamma}(X, \mathcal{N})
			\to
				\bigoplus_{v \in X \setminus U}
					R \alg{\Gamma}(\Order_{v} / k, \mathcal{N})
		\]
	in $D(k^{\ind\rat}_{\pro\et})$ by \eqref{0039}.
	By Proposition \ref{0064} applied to
	$R \alg{\Gamma}_{c}(U, \mathcal{N})$,
	we only need to look at the third term.
	For each $v \in X \setminus U$,
	let $\mathcal{M} \subset \mathcal{N}$ be
	the finite part of $\mathcal{N}$ over $\Order_{v}$.
	Then we have
		\[
				R \alg{\Gamma}(\Order_{v} / k, \mathcal{M})
			\isomto
				R \alg{\Gamma}(\Order_{v} / k, \mathcal{N})
		\]
	by \cite[Proposition 5.2.3.5]{Suz20b}.
	Since the generic fiber of $\mathcal{M}$ is \'etale,
	we have $R \alg{\Gamma}(\Order_{v} / k, \mathcal{M}) \in D^{b}(\Alg / k)$
	by \cite[Proposition 3.7]{OS26}.
	Since $\mathcal{M}$ is killed by multiplication by some positive integer,
	we further have
	$R \alg{\Gamma}(\Order_{v} / k, \mathcal{M}) \in D^{b}(\Alg_{\uc} / k)$,
	yielding the desired result.
\end{proof}

Here is the promised formula in this section:

\begin{theorem} \label{0003} \mbox{}
	Let $f \colon A_{1} \to A_{2}$ be an isogeny of abelian varieties over $K$.
	For each $i = 1, 2$,
	let $\mathcal{A}_{i}$ be the N\'eron model of $A_{i}$ over $X$.
	Let $\mathcal{N}_{f}$ be the schematic closure
	of the kernel of $f$ in $\mathcal{A}_{1}$.
	Then we have
		\begin{equation} \label{0009}
				\mu_{A_{2} / K} - \mu_{A_{1} / K}
			=
					\chi^{0}(X, \mathcal{A}_{1})
				-
					\chi^{0}(X, \mathcal{A}_{2})
			=
					\chi^{0}(X, \mathcal{N}_{f})
				-
					c(f).
		\end{equation}
	In particular, if $f$ extends to an isogeny on the N\'eron models, then
		\[
				\mu_{A_{2} / K} - \mu_{A_{1} / K}
			=
				\chi^{0}(X, \mathcal{N}_{f}).
		\]
\end{theorem}

\begin{proof}
	We first show the first equality in \eqref{0009}.
	The object $\alg{H}^{i}(X, \mathcal{A}_{j})$ is \'etale for $i \ne 0, 1$
	by \cite[Theorem 3.4.1 (1) and (2)]{Suz20a}.
	The object $\alg{H}^{0}(X, \mathcal{A}_{j})$
	is an extension of an \'etale group with finitely generated group of geometric points
	by (the perfection of) an abelian variety over $k$
	by \cite[Theorem 3.4.1 (2)]{Suz20a}.
	Since $A_{1}$ and $A_{2}$ are isogenous,
	the objects $\alg{H}^{0}(X, \mathcal{A}_{1})$
	and $\alg{H}^{0}(X, \mathcal{A}_{2})$ are isomorphic up to bounded torsion.
	Hence, their abelian variety parts are isogenous.
	Combining these, we get the first equality in \eqref{0009}.
	
	Let $\mathcal{A}_{2}' = \mathcal{A}_{1} / \mathcal{N}_{f}$.
	Then we have the diagram \eqref{0008}.
	Let $\mathcal{D}_{f} = [\mathcal{A}_{2}' \to \mathcal{A}_{2}]$
	be the mapping cone of the morphism $\mathcal{A}_{2}' \to \mathcal{A}_{2}$.
	We have distinguished triangles
		\begin{equation} \label{0040}
				R \alg{\Gamma}(X, \mathcal{N}_{f})
			\to
				R \alg{\Gamma}(X, \mathcal{A}_{1})
			\to
				R \alg{\Gamma}(X, \mathcal{A}_{2}'),
		\end{equation}
		\begin{equation} \label{0029}
				R \alg{\Gamma}(X, \mathcal{A}_{2}')
			\to
				R \alg{\Gamma}(X, \mathcal{A}_{2})
			\to
				R \alg{\Gamma}(X, \mathcal{D}_{f})
		\end{equation}
	in $D(k^{\ind\rat}_{\pro\et})$.
	
	We will calculate $R \alg{\Gamma}(X, \mathcal{D}_{f})$.
	Let $v \in X_{0}$.
	By \cite[Proposition 3.4.2 (a)]{Suz20b} and its proof,
	we know that $\alg{H}^{i}(\Order_{v}, \mathcal{A}_{2}') = 0$ for $i \ne 0$
	and $\alg{H}^{0}(\Order_{v}, \mathcal{A}_{2}')$ is (the perfection of)
	the Greenberg transform $\Gr_{\Order_{v}} \mathcal{A}_{2}'$
	of $\mathcal{A}_{2}' \times_{X} \Spec \Order_{v}$.
	The same is true with $\mathcal{A}_{2}'$ replaced by $\mathcal{A}_{2}$.
	Hence, the distinguished triangle
		\[
				R \alg{\Gamma}(\Order_{v} / k, \mathcal{A}_{2}')
			\to
				R \alg{\Gamma}(\Order_{v} / k, \mathcal{A}_{2})
			\to
				R \alg{\Gamma}(\Order_{v} / k, \mathcal{D}_{f})
		\]
	reduces to the short exact sequence
		\[
				0
			\to
				\Res_{k(v) / k}(\Gr_{\Order_{v}} \mathcal{A}_{2}')
			\to
				\Res_{k(v) / k}(\Gr_{\Order_{v}} \mathcal{A}_{2})
			\to
				\Res_{k(v) / k} D_{f, v}
			\to
				0
		\]
	coming from \eqref{0011},
	where $\Res_{k(v) / k}$ is the Weil restriction functor.
	In particular,
	$\alg{H}^{0}(\Order_{v} / k, \mathcal{D}_{f}) \in \Alg_{\uc} / k$
	and
		\begin{equation} \label{0013}
				\dim
				\alg{H}^{0}(\Order_{v} / k, \mathcal{D}_{f})
			=
				[k(v) : k] \dim D_{f, v}.
		\end{equation}
	
	Let $U \subset X$ be a dense open subscheme
	where $A_{1}$ and hence also $A_{2}$ have good reduction.
	Then $\Tilde{f} \colon \mathcal{A}_{1} \to \mathcal{A}_{2}$
	restricted to $U$ is an isogeny of abelian schemes.
	Therefore, the restriction of $\mathcal{D}_{f}$ to $U$ is zero.
	Hence, by \eqref{0039},
	we have a commutative diagram of distinguished triangles
		\[
			\begin{CD}
					R \alg{\Gamma}_{c}(U, \mathcal{A}_{2}')
				@> \sim >>
					R \alg{\Gamma}_{c}(U, \mathcal{A}_{2})
				@>>>
					0
				\\ @VVV @VVV @VVV \\
					R \alg{\Gamma}(X, \mathcal{A}_{2}')
				@>>>
					R \alg{\Gamma}(X, \mathcal{A}_{2})
				@>>>
					R \alg{\Gamma}(X, \mathcal{D}_{f})
				\\ @VVV @VVV @VV \wr V \\
					\displaystyle
					\bigoplus_{v \in X \setminus U}
						R \alg{\Gamma}(\Order_{v} / k, \mathcal{A}_{2}')
				@>>>
					\displaystyle
					\bigoplus_{v \in X \setminus U}
						R \alg{\Gamma}(\Order_{v} / k, \mathcal{A}_{2})
				@>>>
					\displaystyle
					\bigoplus_{v \in X \setminus U}
						R \alg{\Gamma}(\Order_{v} / k, \mathcal{D}_{f}).
			\end{CD}
		\]
	Hence, $R \alg{\Gamma}(X, \mathcal{D}_{f})$ has cohomology only in degree zero
	that is an object of $\Alg_{\uc} / k$ and
		\begin{equation} \label{0041}
				\dim \alg{H}^{0}(X, \mathcal{D}_{f})
			=
				\sum_{v \in X \setminus U}
					\dim \alg{H}^{0}(\Order_{v} / k, \mathcal{D}_{f})
			=
				c(f)
		\end{equation}
	by \eqref{0013}.
	
	With the properties of $\alg{H}^{i}(X, \mathcal{A}_{2})$
	noted before Proposition \ref{0038},
	we know that
	$\alg{H}^{i}(X, \mathcal{A}_{2}')$ is in $\Loc \Alg / k$ for all $i$
	and zero for almost all $i$.
	The distinguished triangle \eqref{0029} and the equality \eqref{0041}
	then imply that
		\begin{equation} \label{0015}
				\chi^{0}(X, \mathcal{A}_{2})
			=
					\chi^{0}(X, \mathcal{A}_{2}')
				+
					\dim \alg{H}^{0}(X, \mathcal{D}_{f})
			=
					\chi^{0}(X, \mathcal{A}_{2}')
				+
					c(f).
		\end{equation}
	The distinguished triangle \eqref{0040} implies that
		\begin{equation} \label{0014}
				\chi^{0}(X, \mathcal{A}_{1})
			=
				\chi^{0}(X, \mathcal{N}_{f})
			+
				\chi^{0}(X, \mathcal{A}_{2}').
		\end{equation}
	Combining the first equality in \eqref{0009}, \eqref{0015} and \eqref{0014},
	we get the result.
\end{proof}


\section{%
	\texorpdfstring{Change in $\mu$ by flat Euler characteristics: finite base field case}
	{Change in mu by flat Euler characteristics: finite base field case}
}

Let $X$ be a geometrically connected, proper, smooth curve
over a perfect field $k$ of characteristic $p > 0$.
Let $K$ be its function field.
Assume that $k$ is finite with $q$ elements in this section.
We will deduce a formula specific to a finite base field
from Theorem \ref{0003}.

Let $A$ be an abelian variety over $K$.
In this finite base field case,
by \cite[Theorem 2.4.3]{LLSTT21},
the number $\mu_{A / K}$ defined in \eqref{0036}
agrees with the $\mu$-invariant of $A$
as defined in the second paragraph of Section \ref{0079}.

For an fppf sheaf $\mathcal{F}$ on $X$,
let
	\[
			\chi(X, \mathcal{F})
		=
			\prod_{i}
				(\# H^{i}(X, \mathcal{F}))^{(-1)^{i}}
	\]
if $H^{i}(X, \mathcal{F})$ is finite for all $i$
and zero for almost all $i$.
The following proposition generalizes
\cite[Chapter III, Lemma 8.9]{Mil06}
on finite flat group schemes
to quasi-finite ones:

\begin{proposition} \label{0049}
	For any separated, quasi-finite, flat group scheme $\mathcal{N}$ over $X$,
	the groups $H^{i}(X, \mathcal{N})$ are finite for all $i$
	and zero for almost all $i$.
\end{proposition}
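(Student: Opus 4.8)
The plan is to reduce the quasi-finite case to the finite \'etale case and the finite flat case, both of which are either classical or already available. First I would write $\mathcal{N}$ as an extension
\[
	0 \to \mathcal{N}^{0} \to \mathcal{N} \to \mathcal{N}^{\et} \to 0
\]
where $\mathcal{N}^{0}$ is the schematic closure in $\mathcal{N}$ of the identity component $N^{0}$ of the generic fiber $N$ (which is finite over $X$, being infinitesimal generically), and $\mathcal{N}^{\et}$ is the resulting quotient, with \'etale generic fiber. By the long exact sequence in fppf cohomology and the fact that finiteness (resp. vanishing for almost all $i$) is stable under extensions, it suffices to treat separately the case $\mathcal{N}$ finite flat over $X$ and the case $\mathcal{N}$ quasi-finite flat separated with \'etale generic fiber. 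The finite flat case is exactly \cite[Chapter III, Lemma 8.9]{Mil06}, so nothing more is needed there.

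For the remaining case, I would imitate the proof of Proposition \ref{0038}. Choose a dense open $U \subset X$ over which $\mathcal{N}$ is \'etale, and use the compact-support distinguished triangle
\[
	R \Gamma_{c}(U, \mathcal{N}) \to R \Gamma(X, \mathcal{N}) \to \bigoplus_{v \in X \setminus U} R \Gamma(\Order_{v}, \mathcal{N})
\]
obtained by applying $R\Gamma(k, \var)$ to \eqref{0039}. The term $R\Gamma_{c}(U, \mathcal{N})$ equals $R\pi_{!}\mathcal{N}$ for the structure morphism $\pi \colon U_{\et} \to \Spec k_{\et}$, hence is a bounded complex of finite groups by the standard finiteness theorem for \'etale cohomology with compact support over a finite field \cite[Chapter VI, Theorem 3.2 (d)]{Mil80}; this uses that $\mathcal{N}|_{U}$ is a constructible \'etale sheaf. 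For each $v \in X \setminus U$, replacing $\mathcal{N}$ by its finite part $\mathcal{M}$ over $\Order_{v}$ does not change $R\Gamma(\Order_{v}, \var)$ (the quotient $\mathcal{N}/\mathcal{M}$ has vanishing cohomology over the henselian local ring $\Order_{v}$, being \'etale with the cohomology concentrated as an unramified Galois cohomology that one controls), and $R\Gamma(\Order_{v}, \mathcal{M})$ is finite with bounded support since $\mathcal{M}$ is finite flat over the complete discrete valuation ring $\Order_{v}$ with finite residue field. Combining the three terms through the long exact cohomology sequence yields the claim.

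Alternatively — and perhaps more cleanly — one can derive the statement directly from the already-proven general-base-field Proposition \ref{0038} together with Proposition \ref{0005}\eqref{0001}: Proposition \ref{0038} gives $\alg{H}^{i}(X, \mathcal{N}) \in \Alg_{\uc}/k \subset \Alg/k$ for all $i$ and zero for almost all $i$, and then the isomorphism $R\Gamma(X, \mathcal{N}) \cong R\Gamma(k, R\alg{\Gamma}(X, \mathcal{N}))$ (from \cite[Proposition 2.7.8]{Suz20a}) combined with a hypercohomology spectral sequence $H^{s}(k, \alg{H}^{t}(X, \mathcal{N})) \Rightarrow H^{s+t}(X, \mathcal{N})$ reduces finiteness of $H^{i}(X, \mathcal{N})$ to finiteness of each $H^{s}(k, G)$ for $G \in \Alg/k$, which is Proposition \ref{0005}\eqref{0001}. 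The spectral sequence has only finitely many nonzero columns (by the ``almost all $i$'' vanishing) and each column has only finitely many nonzero rows (by Proposition \ref{0005}\eqref{0001} and Lang's theorem), so both finiteness and the vanishing for almost all $i$ follow at once.

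The main obstacle is the bookkeeping at the bad places $v \in X \setminus U$ in the first approach: one must be careful that passing to the finite part $\mathcal{M}$ of $\mathcal{N}$ over $\Order_{v}$ is harmless for $R\Gamma(\Order_{v}, \var)$, i.e. that the \'etale quotient $\mathcal{N}/\mathcal{M}$ contributes nothing — this is the content of the fact (cited in the excerpt, the paragraph before \cite[Section 7.3, Proposition 3]{BLR90}) that a quasi-finite flat separated group scheme over a henselian base splits as its finite part times an \'etale part, whose cohomology over the strictly henselian-up-to-residue-extension ring $\Order_{v}$ is controlled. In the second approach there is essentially no obstacle at all, since all the work has been front-loaded into Proposition \ref{0038} and Proposition \ref{0005}; I would therefore present the short argument via Proposition \ref{0038} and Proposition \ref{0005}\eqref{0001} as the proof, mentioning the compact-support triangle only as an alternative.
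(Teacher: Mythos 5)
Your proposal is correct, and the argument you choose to present (deduce the claim from Proposition \ref{0038} by applying Proposition \ref{0005}\eqref{0001} to $R\alg{\Gamma}(X,\mathcal{N})$, via $R\Gamma(X,\mathcal{N})\cong R\Gamma(k,R\alg{\Gamma}(X,\mathcal{N}))$) is exactly the paper's proof, with the hypercohomology spectral sequence made explicit. The first, more hands-on reduction you sketch is a workable alternative but is not needed, as you yourself conclude.
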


\begin{proof}
	As noted after \eqref{0065},
	we have
		\[
				R \Gamma(X, \mathcal{N})
			\cong
				R \Gamma(k, R \alg{\Gamma}(X, \mathcal{N})).
		\]
	Hence, the proposition follows from Proposition \ref{0038}
	by applying Proposition \ref{0005} \eqref{0001} to
	$G = R \alg{\Gamma}(X, \mathcal{N})$.
\end{proof}

\begin{theorem} \label{0004}
	Let $f \colon A_{1} \to A_{2}$ be an isogeny of abelian varieties over $K$.
	Let $\mathcal{N}_{f}$ be the schematic closure
	of the kernel of $f$ in the N\'eron model of $A_{1}$ over $X$.
	Then we have
		\[
				\mu_{A_{2} / K} - \mu_{A_{1} / K}
			=
					v_{q}(\chi(X, \mathcal{N}_{f}))
				-
					c(f).
		\]
	In particular, if $f$ extends to an isogeny on the N\'eron models, then
		\[
				\mu_{A_{2} / K} - \mu_{A_{1} / K}
			=
				v_{q}(\chi(X, \mathcal{N}_{f})).
		\]
\end{theorem}

\begin{proof}
	This follows from Theorem \ref{0003}
	by applying Proposition \ref{0005} \eqref{0002}
	to $G = R \alg{\Gamma}(X, \mathcal{N}_{f})$.
\end{proof}


\section{Lie complexes}

Our next task is to deduce ``linear'' (or coherent) versions of
Theorems \ref{0003} and \ref{0004},
which will be completed in Section \ref{0046}.
As preparations, in this section,
we will define the linearized terms needed for this,
which are derived versions of Lie algebras
(relative tangent spaces at the zero section).

Let $X$ be a geometrically connected, proper, smooth curve
over a perfect field $k$ of characteristic $p > 0$.
Let $K$ be its function field.
Let $U \subset X$ be a dense open subscheme.
For a flat group scheme $G$ locally of finite type over $U$
with identity section $e \colon U \into G$,
let $L_{G / U} \in D(\Order_{G})$ be the cotangent complex of $G / U$
(\cite[Chapter II, (1.2.7.1)]{Ill71}).
Let $e^{\ast}$ be the pullback functor for sheaves of $\Order_{G}$-modules.
Set
	\[
			l_{G}
		:=
			L e^{\ast} L_{G / U}
		\in
			D(\Order_{U})
	\]
as in \cite[Chapter VII, (3.1.1.1)]{Ill72},
which is (represented by) a perfect complex of $\Order_{U}$-modules
with tor-amplitude in $[-1, 0]$ by \cite[Chapter VII, (3.1.1.3)]{Ill72}.
Let
	\[
			R \Lie G
		:=
			l_{G}^{\vee}
		\in
			D^{b}(\Order_{U})
	\]
be the Lie complex defined as the derived $\Order_{U}$-dual of $l_{G}$
(\cite[Chapter VII, (3.1.1.1)$'$]{Ill72}).
It is (represented by) a perfect complex of $\Order_{U}$-modules
with tor-amplitude in $[0, 1]$
(\cite[Chapter VII, (3.1.1.3)$'$]{Ill72})
whose zeroth cohomology is the usual Lie algebra $\Lie G$ of $G$.
If $0 \to G' \to G \to G'' \to 0$ is an exact sequence
of flat group schemes locally of finite type over $U$,
then we have a canonical distinguished triangle
	\begin{equation} \label{0069}
			R \Lie G'
		\to
			R \Lie G
		\to
			R \Lie G''
	\end{equation}
by \cite[Chapter VII, Proposition 3.1.1.5]{Ill72}.
If $G$ is smooth over $U$, then $R \Lie G$ is
the usual Lie algebra $\Lie G$ of $G$.
Hence, if a flat group scheme $N$ locally of finite type over $U$
can be embedded into a smooth group scheme $G$ as a closed group subscheme
with quotient $H = G / N$
(which is a smooth group scheme by \cite[Theorem 4.C]{Ana73}),
then
	\begin{equation} \label{0042}
			R \Lie N
		\cong
			[\Lie G \to \Lie H][-1].
	\end{equation}
Such an embedding always exists
if we assume the quasi-compactness of $N$ and restrict all the objects to $K$
by \cite[Chapter III, Section 5, Lemma 6.5]{DG70}
or if $N$ is finite by \cite[Chapter III, Theorem A.5]{Mil06}.
Hence, in these cases, one may take this formula as the definition of $R \Lie N$.

This object $R \Lie N$ is particularly simple
if the generic fiber of $N$ is \'etale:

\begin{proposition} \label{0044}
	Let $\mathcal{N}$ be a separated, quasi-finite, flat group scheme over $U$
	whose generic fiber is \'etale.
	Let $e \colon U \to \mathcal{N}$ be its zero section.
	Then the derived $\Order_{U}$-dual of $R \Lie \mathcal{N}$
	(namely, $l_{\mathcal{N}}$) is isomorphic to
	$e^{\ast} \Omega^{1}_{\mathcal{N} / U}$
	placed in degree zero.
\end{proposition}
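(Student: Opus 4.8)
The plan is to work throughout with Illusie's co-Lie complex $l_{\mathcal{N}}$, whose $\Order_{U}$-dual is by definition $R\Lie\mathcal{N}$, and to use two standard facts: $H^{0}(l_{\mathcal{N}}) = e^{\ast}\Omega^{1}_{\mathcal{N}/U}$, and — since $\mathcal{N}\to U$ is flat and locally of finite presentation — $l_{\mathcal{N}}$ is a perfect complex of $\Order_{U}$-modules of tor-amplitude in $[-1,0]$ (\cite[Chapter VII, Section 3.1.1]{Ill72}; this is the non-dual counterpart of the property of $R\Lie\mathcal{N}$ recalled above). Being perfect, $l_{\mathcal{N}}$ satisfies biduality, so $(R\Lie\mathcal{N})^{\vee} = (l_{\mathcal{N}}^{\vee})^{\vee}\cong l_{\mathcal{N}}$. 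Hence the proposition is reduced to showing that $l_{\mathcal{N}}$ is concentrated in cohomological degree $0$, equivalently that $H^{-1}(l_{\mathcal{N}}) = 0$.

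This last vanishing is where the hypothesis on the generic fiber enters. The co-Lie complex commutes with base change on $U$, so restricting to the generic point $\Spec K$ gives $l_{\mathcal{N}}\otimes_{\Order_{U}}K = l_{\mathcal{N}_{K}/K} = 0$, because $\mathcal{N}_{K}$ is \'etale over $K$ and an \'etale morphism has vanishing cotangent complex. Thus $l_{\mathcal{N}}$ is a perfect complex of tor-amplitude in $[-1,0]$ that vanishes at the generic point of $U$. The vanishing of the coherent $\Order_{U}$-module $H^{-1}(l_{\mathcal{N}})$ may then be checked after localizing at an arbitrary closed point $v$ of $U$: over the discrete valuation ring $\Order_{v}$ one may represent $l_{\mathcal{N}}\otimes_{\Order_{U}}\Order_{v}$ by a two-term complex $[\Order_{v}^{a}\xrightarrow{A}\Order_{v}^{b}]$ of finite free modules (in degrees $-1$ and $0$); since this becomes acyclic after tensoring with the fraction field $K$, the matrix $A$ must be square ($a=b$) with $\det A\neq 0$, hence injective, so $H^{-1}(l_{\mathcal{N}})_{v} = \Ker A = 0$. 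Together with the vanishing at the generic point, this gives $H^{-1}(l_{\mathcal{N}}) = 0$, whence $l_{\mathcal{N}}\cong e^{\ast}\Omega^{1}_{\mathcal{N}/U}$ in degree $0$, and therefore $(R\Lie\mathcal{N})^{\vee}\cong e^{\ast}\Omega^{1}_{\mathcal{N}/U}$ in degree $0$.

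The only point requiring real care — and the likeliest source of friction — is the very first one: being entitled to write $(R\Lie\mathcal{N})^{\vee}\cong l_{\mathcal{N}}$, i.e. knowing that $l_{\mathcal{N}}$ itself, not merely its dual $R\Lie\mathcal{N}$, is perfect of tor-amplitude in $[-1,0]$. If the cited form of Illusie's result is stated only for the dual, or only for finite flat group schemes, I would note that this property is \'etale-local on $U$ and reduce, in a neighborhood of the zero section, to the finite locally free part of $\mathcal{N}$ over the strict henselization of $\Order_{v}$, where it is classical; everything else is formal. As a sanity check on the statement and the shifts, take $\mathcal{N}=\mu_{p}$ over $\Z_{p}$: then $l_{\mathcal{N}}\cong[\Z_{p}\xrightarrow{p}\Z_{p}]$ in degrees $-1,0$, so $l_{\mathcal{N}}\cong(\Z_{p}/p)[0] = e^{\ast}\Omega^{1}_{\mu_{p}/\Z_{p}}[0]$, while $\Lie\mu_{p} = H^{0}(R\Lie\mu_{p}) = 0$, all consistent. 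One could also avoid the co-Lie complex entirely by first reducing to $\mathcal{N}$ finite flat over a discrete valuation ring and choosing an embedding $0\to\mathcal{N}\to\mathcal{G}\to\mathcal{H}\to0$ into smooth group schemes as in \eqref{0042}: then $\Lie\mathcal{G}\to\Lie\mathcal{H}$ is injective with torsion cokernel $M$ (its source is torsion-free and it is an isomorphism over $K$), so $R\Lie\mathcal{N}\cong M[-1]$ and $(R\Lie\mathcal{N})^{\vee}\cong\Ext^{1}_{\Order_{U}}(M,\Order_{U})$, which one identifies with $e^{\ast}\Omega^{1}_{\mathcal{N}/U} = \Coker(\Lie\mathcal{H}^{\vee}\to\Lie\mathcal{G}^{\vee})$ by the long exact $\Ext$-sequence and the conormal sequence of $\mathcal{N}\subset\mathcal{G}$; but the argument above is shorter and needs no such reduction.
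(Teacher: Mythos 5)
Your proposal is correct and takes essentially the same route as the paper: the paper also works with the dual of $R \Lie \mathcal{N}$ (the co-Lie complex), observes that its $H^{-1}$ is torsion-free by the tor-amplitude condition (which you verify concretely via a two-term free presentation over the local rings) and torsion because the \'etale generic fiber forces vanishing at the generic point, hence zero, while $H^{0}$ is $e^{\ast} \Omega^{1}_{\mathcal{N}/U}$ by definition.
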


\begin{proof}
	$H^{-1}(l_{\mathcal{N}})$ is torsion-free
	by the tor-amplitude condition.
	It is torsion by the \'etaleness assumption.
	Hence, $H^{-1}(l_{\mathcal{N}}) = 0$.
	The description of $H^{0}(l_{\mathcal{N}})$ directly follows
	from the definition of the cotangent complex.
\end{proof}

For a perfect complex of $\Order_{U}$-modules $\mathcal{F}$,
let $\det \mathcal{F}$ be its determinant invertible sheaf
(\cite[Tag 0FJW]{Sta26}).
When $U = X$,
let $\deg \mathcal{F} := \deg(\det \mathcal{F})$,
where $\deg$ on the right-hand side denotes
the degree of an invertible sheaf with respect to $k$.%
\footnote{\label{0076}
	As noted in the paragraph after \cite[0AYR]{Sta26},
	the definition of the degree of an invertible sheaf
	over a $1$-dimensional proper $k$-scheme depends on $k$.
	The definition of $\deg \mathcal{F}$ used in \cite[0AYR]{Sta26}
	(when $\mathcal{F}$ is concentrated in degree zero)
	is $\chi(X, \mathcal{F}) - \chi(K, \mathcal{F}|_{K}) \chi(X, \Order_{X})$,
	with which the ``Riemann--Roch theorem'' below is true by definition.
	For the agreement of this definition with the ``usual'' definition
	(which is the real content of the Riemann--Roch theorem),
	see \cite[Tags 0AYX, 0DJ5 and 0AYY]{Sta26}.
}
Define
	\[
			\chi(X, \mathcal{F})
		:=
			\sum_{i}
				(-1)^{i}
				\dim_{k} H^{i}(X, \mathcal{F}).
	\]
	\[
			\chi(K, \mathcal{F}|_{K})
		:=
			\sum_{i}
				(-1)^{i}
				\dim_{K} H^{i}(K, \mathcal{F}|_{K}),
	\]
where $H^{i}(K, \mathcal{F}|_{K})$ is equal to
the dimension of the $i$-th cohomology object
$H^{i}(\mathcal{F}|_{K})$ as a $K$-vector space.
Then
	\[
			\chi(X, \mathcal{F})
		=
				\chi(K, \mathcal{F}|_{K})
				\chi(X, \Order_{X})
			+
				\deg \mathcal{F}
	\]
by the Riemann--Roch theorem.

\begin{proposition} \label{0043}
	Let $\mathcal{N}$ be a separated, quasi-finite, flat group scheme over $X$.
	Then
		\[
				\chi(X, R \Lie \mathcal{N})
			=
				\deg(R \Lie \mathcal{N}).
		\]
\end{proposition}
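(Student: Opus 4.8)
The plan is to use the Riemann--Roch formula displayed just before the statement, which reduces $\chi(X, R\Lie\mathcal{N}) = \deg R\Lie\mathcal{N}$ to showing that $\chi(K, g^{\ast}R\Lie\mathcal{N}) = 0$, i.e. that the generic fiber $g^{\ast}R\Lie\mathcal{N} \cong R\Lie N$ (where $N$ is the generic fiber of $\mathcal{N}$) has vanishing coherent Euler characteristic over $K$. Since $N$ is a finite group scheme over the field $K$, one knows that $R\Lie N$ is a perfect complex of $K$-vector spaces concentrated in degrees $0$ and $1$, and the Euler characteristic $\chi(K, R\Lie N) = \dim_{K} H^{0}(R\Lie N) - \dim_{K} H^{1}(R\Lie N)$ is just an alternating sum of $K$-dimensions. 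So the whole statement comes down to the numerical identity $\dim_{K}\Lie N = \dim_{K} H^{1}(R\Lie N)$ for a finite group scheme $N$ over a field.

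First I would establish this vanishing. Decompose $N$ over $K$ into its \'etale and connected (infinitesimal) parts; since $R\Lie(\var)$ turns short exact sequences into distinguished triangles by \cite[Chapter VII, Proposition 3.1.1.5]{Ill72}, the Euler characteristic is additive, so it suffices to treat the two extreme cases separately. For $N$ \'etale over $K$, Proposition \ref{0044} gives $R\Lie N \cong (e^{\ast}\Omega^{1}_{N/K})^{\vee}[0]$, but an \'etale scheme has zero sheaf of differentials, so $R\Lie N = 0$ and its Euler characteristic vanishes trivially. For $N$ infinitesimal, one uses the classical duality $\dim_{K}\Lie N = \dim_{K}\Lie N^{\vee}$ for Cartier duals of finite group schemes together with the identification of $H^{1}(R\Lie N)$ with the cotangent-type term $(e^{\ast}\Omega^{1}_{N/K})$ or, more directly, one invokes the standard fact that for a finite group scheme over a field the Lie complex $R\Lie N$ has $\chi = 0$ because $N$ embeds in a smooth group scheme $G$ with smooth quotient $H = G/N$ of the same dimension (formula \eqref{0042}), so $R\Lie N \cong [\Lie G \to \Lie H][-1]$ with $\dim_{K}\Lie G = \dim_{K}\Lie H$, forcing the alternating sum to be zero.

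Having shown $\chi(K, g^{\ast}R\Lie\mathcal{N}) = 0$, the Riemann--Roch formula quoted above reads $\chi(X, R\Lie\mathcal{N}) = 0\cdot\chi(X,\Order_{X}) + \deg R\Lie\mathcal{N} = \deg R\Lie\mathcal{N}$, which is exactly the claim. The main obstacle, such as it is, is the bookkeeping in the infinitesimal case: one must make sure that the ``degree zero / degree one'' cohomology sheaves of the perfect complex $R\Lie N$ have equal $K$-dimension, and the cleanest route is through the embedding $N \hookrightarrow G$ into a smooth group with smooth quotient $H$ of equal dimension (which exists over a field by \cite[Chapter III, Section 5, Lemma 6.5]{DG70}), rather than through an explicit analysis of $\Omega^{1}_{N/K}$. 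Everything else is a direct application of Riemann--Roch and the additivity of Euler characteristics in distinguished triangles.
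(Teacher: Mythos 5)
Your proposal is correct and takes essentially the same approach as the paper: the Riemann--Roch display reduces the claim to $\chi(K, R \Lie N) = 0$ for the finite generic fiber $N$, and this vanishing is obtained exactly as in the paper from \eqref{0042}, embedding $N$ into a smooth group scheme $G$ with quotient $H = G/N$ of the same dimension. The \'etale/infinitesimal decomposition you sketch first is unnecessary, and the aside $\dim_{K} \Lie N = \dim_{K} \Lie N^{\vee}$ for Cartier duals is false in general (e.g.\ $N = \mu_{p}$ in characteristic $p$), but since you ultimately rely on the embedding argument rather than on that claim, your proof stands.
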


\begin{proof}
	Let $N$ be the generic fiber of $\mathcal{N}$.
	It is enough to show that $\chi(K, R \Lie N) = 0$.
	This follows from \eqref{0042}
	since $\dim G = \dim H$ in this case.
\end{proof}


\section{Duality of Euler characteristics}

Let $X$ be a geometrically connected, proper, smooth curve
over a perfect field $k$ of characteristic $p > 0$.
Let $K$ be its function field.
We need duality results for $\deg(R \Lie \mathcal{N})$
and $\chi^{0}(X, \mathcal{N})$.

\begin{proposition} \label{0023}
	Let $U \subset X$ be a dense open subscheme.
	Let $N$ be a finite flat group schemes over $U$
	with Cartier dual $M$.
	Then there exists a canonical isomorphism
	$\det(R \Lie N) \cong (\det(R \Lie M))^{\otimes -1}$
	of $\Order_{U}$-modules.
\end{proposition}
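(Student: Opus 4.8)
The plan is to reduce the statement to a computation with differentials, using the structure theory of finite flat group schemes plus the Cartier duality pairing. First I would note that the assertion is local on $U$ and stable under faithfully flat base change, so I may work Zariski- or even fppf-locally; in particular I may assume $\Order_U$ is local, or a complete discrete valuation ring, or even a field when it helps. Over a base in which the order of $N$ is invertible, $N$ (and $M$) are étale, so $R\Lie N$ and $R\Lie M$ are concentrated in degree $-1$ with value the conormal module $e^\ast \Omega^1$, and in that range Proposition \ref{0044} already identifies these; the duality $\det R\Lie N \cong (\det R\Lie M)^{\otimes -1}$ then follows from the perfect pairing between the tangent spaces of $N$ and $M$ at the origin coming from Cartier duality. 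The substantive case is therefore when $p$ is not invertible and $N$ has an infinitesimal part.

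For the general case I would use the standard dévissage: the exact sequence $0 \to N^0 \to N \to N^{\et} \to 0$ (connected-étale sequence, available after passing to a Henselian or complete local base, or more robustly by filtering so that the associated graded pieces are of the three ``pure'' types) is Cartier dual to $0 \to (N^{\et})^\vee \to M \to (N^0)^\vee \to 0$, where $(N^{\et})^\vee$ is infinitesimal (of multiplicative type) and $(N^0)^\vee$ is étale. Since $\det R\Lie$ is multiplicative in distinguished triangles — coming from the triangle $R\Lie G' \to R\Lie G \to R\Lie G''$ attached to a short exact sequence via \cite[Chapter VII, Proposition 3.1.1.5]{Ill72} — both sides of the claimed isomorphism factor through the graded pieces, so it suffices to treat $N$ étale, $N$ of multiplicative type, and $N$ infinitesimal (e.g. $\alpha_p$ and $\mu_p$ locally). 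For $N$ étale, $R\Lie N = 0$ and its dual $M$ is infinitesimal of multiplicative type with $\Lie M$ annihilated and $e^\ast\Omega^1_M$ free of the same rank as $\log_p \#N$, and one checks $\det R\Lie M$ is the trivial line bundle, matching $\det R\Lie N = \Order_U$; the multiplicative-type case is the mirror image. The genuinely infinitesimal case (both $N$ and $M$ infinitesimal) is where I would do the real work.

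The key input there is that for a finite flat group scheme, $R\Lie N$ is computed by $[\Lie G \to \Lie H][-1]$ for any presentation $0 \to N \to G \to H \to 0$ by smooth groups (formula \eqref{0042}), and that Cartier duality intertwines the coLie complex $\ell_N$ with a shift of $R\Lie N^\vee$ — concretely, Illusie's comparison gives a canonical isomorphism $R\Lie N \cong (\ell_{N^\vee})^\vee[-1]$ or an equivalent statement relating the co-Lie complex of $N$ to the Lie complex of its dual. Taking determinants and using that $\det$ of a shifted complex inverts the line bundle, this yields precisely $\det R\Lie N \cong (\det R\Lie M)^{\otimes -1}$. I expect the main obstacle to be bookkeeping the canonicity and sign/shift conventions in Illusie's formalism — verifying that the Cartier-duality isomorphism of co-Lie/Lie complexes is compatible with the triangle-multiplicativity of $\det$, so that the dévissage glues to a genuinely canonical isomorphism rather than one depending on the chosen filtration. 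An alternative, perhaps cleaner, route that sidesteps some of this: reduce to the universal case over $\Z$ (or over $\F_p$) via the classification of finite flat group schemes, or invoke the Grothendieck–Messing / Dieudonné-theoretic identification of $\Lie N$ and $\Lie N^\vee$ with complementary Hodge pieces of a common Dieudonné module, where the pairing making the determinants inverse is manifest; I would choose whichever keeps the ``canonical'' claim most transparent.
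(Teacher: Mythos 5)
Your reduction to pure types is reasonable in outline, but the key input you invoke for the genuinely infinitesimal (and in fact for every $p$-power) case does not exist: there is no complex-level Cartier duality of the form $R \Lie N \cong (l_{N^{\vee}})^{\vee}[-1]$ in Illusie's formalism. A counterexample already occurs for $N = \mu_{p}$, $M = \Z/p$ over $U$ (characteristic $p$): from $0 \to \mu_{p} \to \Gm \stackrel{p}{\to} \Gm \to 0$ one gets $R \Lie \mu_{p} \cong [\Order_{U} \stackrel{0}{\to} \Order_{U}]$ concentrated in degrees $0$ and $1$, which is nonzero in $D^{b}(\Order_{U})$, whereas $\Z/p$ is \'etale, so both $R \Lie (\Z/p)$ and its co-Lie complex vanish; hence no dual or shift of $l_{\Z/p}$ can recover $R \Lie \mu_{p}$ (and the same failure occurs with the roles of $N$ and $M$ exchanged). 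What is true is only the statement at the level of determinant line bundles --- but that is precisely the proposition to be proved, so citing it as a known comparison is circular. The Dieudonn\'e-theoretic alternative you mention would, over a general open curve $U$, amount to redeveloping the crystalline theory of [BBM82]; the paper uses [BBM82] only for the much weaker fact that $N$ embeds Zariski-locally into an abelian scheme.

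There is a second, unresolved gap: canonicity and globalization. Your connected-\'etale/multiplicative filtration exists only over a Henselian or complete local base (or \'etale-locally), while the proposition asserts a canonical isomorphism of invertible sheaves on all of $U$; you flag the gluing problem but give no mechanism for descending the locally constructed isomorphisms or for proving independence of the chosen filtration. For comparison, the paper's proof embeds $N$ Zariski-locally into an abelian scheme, takes a resolution $0 \to N \to A_{1} \to A_{2} \to 0$, dualizes it to $0 \to M \to B_{2} \to B_{1} \to 0$, and produces the isomorphism from the canonical identification $\det \Lie B_{i} \cong \det \Lie A_{i}$ (via $\Lie B_{i} \cong R^{1}\pi_{i,\ast}\Order_{A_{i}}$, $\det R^{1}\pi_{i,\ast}\Order_{A_{i}} \cong R^{d}\pi_{i,\ast}\Order_{A_{i}}$, and Serre duality); the bulk of that proof is devoted to showing the result is independent of the resolution, which is exactly what lets the local constructions patch. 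Your proposal needs an analogous independence/descent argument --- and a genuine proof of the determinant-level duality in the pure cases --- before it can close.
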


\begin{proof}
	First assume that $N$ can be embedded into an abelian scheme
	as a closed group subscheme.
	Then we have a resolution
	$0 \to N \to A_{1} \to A_{2} \to 0$
	by abelian schemes over $U$.
	Denote its dual (\cite[Theorem (19.1)]{Oor66}) by
	$0 \to M \to B_{2} \to B_{1} \to 0$.
	Let $d$ be the relative dimension of $A_{1}$ (or $A_{2}$).
	Let $\pi_{i} \colon A_{i} \to U$ be the structure morphism.
	By $\Lie B_{i} \cong R^{1} \pi_{i, \ast} \Order_{A_{i}}$
	and \cite[Chapter VII, Section 4.21, Theorem 10]{Ser88},
	we have
		\[
				\det(\Lie B_{i})
			\cong
				\det(R^{1} \pi_{i, \ast} \Order_{A_{i}})
			\cong
				R^{d} \pi_{i, \ast} \Order_{A_{i}},
		\]
	which is dual to $\pi_{i, \ast} \Omega_{A_{i} / U}^{d}$
	by relative Serre duality
	(\cite[Tag 0AU3 (4) and (9)]{Sta26}
	with $\Order_{U}$ taken as a dualizing complex for $U$).
	This sheaf $\pi_{i, \ast} \Omega_{A_{i} / U}^{d}$ is dual to
	$\det(\Lie A_{i})$.
	Hence, $\det(\Lie B_{i}) \cong \det(\Lie A_{i})$ canonically.
	Therefore,
			\begin{equation} \label{0021}
			\begin{split}
				&	\det(R \Lie N)
				\cong
						\det(\Lie A_{1})
					\tensor
						(\det(\Lie A_{2}))^{\tensor -1}
				\\
				& \quad
					\cong
						\det(\Lie B_{1})
					\tensor
						(\det(\Lie B_{2}))^{\tensor -1}
				\cong
					(\det(R \Lie M))^{\otimes -1}.
			\end{split}
			\end{equation}
	If we have another resolution
	$0 \to N \to A_{1}' \to A_{2}' \to 0$
	and a commutative diagram
		\[
			\begin{CD}
					0
				@>>>
					N
				@>>>
					A_{1}'
				@>>>
					A_{2}'
				@>>>
					0
				\\ @. @| @VVV @VVV \\
					0
				@>>>
					N
				@>>>
					A_{1}
				@>>>
					A_{2}
				@>>>
					0
			\end{CD}
		\]
	with $A_{1}' \to A_{1}$ smooth with connected fibers,
	then the kernels of $A_{1}' \to A_{1}$ and $A_{2}' \to A_{2}$ are isomorphic.
	Hence, the resulting isomorphism
			\begin{equation} \label{0022}
			\begin{split}
				&	\det(R \Lie N)
				\cong
						\det(\Lie A_{1}')
					\tensor
						(\det(\Lie A_{2}'))^{\tensor -1}
				\\
					& \quad
					\cong
						\det(\Lie B_{1}')
					\tensor
						(\det(\Lie B_{2}'))^{\tensor -1}
				\cong
					(\det(R \Lie M))^{\otimes -1}
			\end{split}
			\end{equation}
	is compatible with \eqref{0021}.
	If we just have another resolution
	$0 \to N \to A_{1}' \to A_{2}' \to 0$
	(no commutative diagram),
	then take $A_{1}'' = A_{1} \times A_{1}'$
	and the diagonal embedding $N \into A_{1}''$,
	defining $A_{2}'' = A_{1}'' / N$.
	Then we have a commutative diagram
		\[
			\begin{CD}
					0
				@>>>
					N
				@>>>
					A_{1}'
				@>>>
					A_{2}'
				@>>>
					0
				\\ @. @| @AAA @AAA \\
					0
				@>>>
					N
				@>>>
					A_{1}''
				@>>>
					A_{2}''
				@>>>
					0
				\\ @. @| @VVV @VVV \\
					0
				@>>>
					N
				@>>>
					A_{1}
				@>>>
					A_{2}
				@>>>
					0.
			\end{CD}
		\]
	Hence, \eqref{0022} is still compatible with \eqref{0021}.
	This means that we have the required canonical isomorphism
	in this (embeddable) case.
	
	In general, such an embedding exists Zariski locally on $U$
	by \cite[Theorem 3.1.1]{BBM82}.
	Now the canonicity of the isomorphism in the embeddable case proved above
	allows us to patch the local isomorphisms.
\end{proof}

\begin{proposition} \label{0025}
	Let $N$ be a finite flat group scheme over $X$
	with Cartier dual $M$.
	Then
		\[
			\deg(R \Lie N) = - \deg(R \Lie M).
		\]
\end{proposition}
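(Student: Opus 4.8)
The plan is to reduce the global statement to the local isomorphism of determinant invertible sheaves already established in Proposition \ref{0023}, and then take degrees. First I would apply Proposition \ref{0023} with $U = X$ to obtain a canonical isomorphism $\det R \Lie N \cong (\det R \Lie M)^{\otimes -1}$ of invertible $\Order_{X}$-modules. Taking degrees with respect to the base field $k$ and using $\deg(\mathcal{L}^{\otimes -1}) = -\deg \mathcal{L}$ immediately yields
\[
		\deg R \Lie N
	=
		\deg \det R \Lie N
	=
		-\deg \det R \Lie M
	=
		-\deg R \Lie M,
\]
which is exactly the claim. So structurally the proof is a one-line consequence of the previous proposition plus additivity of $\deg$ on tensor products of line bundles.

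The only thing to check is that Proposition \ref{0023} genuinely applies in the form I need. It is stated for finite flat group schemes $N, M$ over a dense open $U \subset X$; here we want $U = X$ itself, which is allowed since $X$ is a dense open subscheme of itself and $X$ is regular of dimension one, so the hypotheses (finite flat, Cartier dual) transfer verbatim. I would also recall that $R \Lie N$ for $N$ finite flat over $X$ has tor-amplitude in $[0,1]$ and is a perfect complex, so its determinant $\det R \Lie N$ is a well-defined invertible sheaf on $X$ and $\deg R \Lie N := \deg \det R \Lie N$ is exactly the notation set up in Section 8; there is no ambiguity between $\deg \mathcal{F}$ and $\deg \det \mathcal{F}$ in this case.

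I do not anticipate a genuine obstacle: the substance of the argument — patching the local embeddings of $N$ into abelian schemes and comparing via Serre duality and Oort's duality of the resolution — was already carried out in the proof of Proposition \ref{0023}, and the present statement merely extracts the numerical shadow of that canonical isomorphism. If one wanted a self-contained phrasing one could instead argue directly: pick a global resolution $0 \to N \to A_{1} \to A_{2} \to 0$ by abelian schemes over $X$ (which exists after noting the embeddability is Zariski-local and the degree is insensitive to such patching, or simply by quoting Proposition \ref{0023}), dualize to $0 \to M \to B_{2} \to B_{1} \to 0$, and use $\det \Lie B_{i} \cong \det \Lie A_{i}$ together with the distinguished triangle for $R \Lie$ of a short exact sequence to get $\deg R \Lie N = \deg \Lie A_{1} - \deg \Lie A_{2} = \deg \Lie B_{1} - \deg \Lie B_{2} = -\deg R \Lie M$. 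Either way the proof is short; the cleanest is to cite Proposition \ref{0023} and take degrees.
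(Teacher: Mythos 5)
Your proposal matches the paper's proof exactly: Proposition \ref{0025} is deduced there precisely by applying Proposition \ref{0023} (with $U = X$) and taking degrees, using that $\deg$ negates under passing to the inverse of a line bundle. Your additional checks (that $U=X$ is allowed and that $\deg R\Lie N = \deg\det R\Lie N$ is the notation of the paper) are correct and harmless.
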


\begin{proof}
	This follows from Propositions \ref{0023}.
\end{proof}

\begin{proposition} \label{0026}
	Let $N$ be a finite flat group scheme over $X$
	with Cartier dual $M$.
	Then
		\[
			\chi^{0}(X, N) = - \chi^{0}(X, M).
		\]
\end{proposition}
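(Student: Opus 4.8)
The plan is to mimic the pattern used in the proof of Proposition \ref{0025}: deduce the identity $\chi^{0}(X,N) = -\chi^{0}(X,M)$ from a duality statement relating $R\alg{\Gamma}(X,N)$ and $R\alg{\Gamma}(X,M)$, together with the fact that $\chi^{0}$ takes into account only the identity components (i.e.\ dimensions), so only the ``unipotent/connected part'' of the cohomology contributes. Concretely, I would invoke the global fppf duality for finite flat group schemes over the curve $X$ — in the geometric (ind-rational pro-\'etale) form established in \cite{Suz20a} — which provides a perfect pairing
\[
	R\alg{\Gamma}(X,N) \tensor^{L} R\alg{\Gamma}(X,M) \to \Q_{p}/\Z_{p}[-3]
\]
(or the analogous statement with the appropriate dualizing object), giving $R\alg{\Gamma}(X,M) \cong R\Hom(R\alg{\Gamma}(X,N), \Q_{p}/\Z_{p})[-3]$ in $D(k^{\ind\rat}_{\pro\et})$. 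By Proposition \ref{0038} both complexes have cohomology in $\Alg_{\uc}/k$, bounded; so I am reduced to understanding how Serre-style duality (Pontryagin-type duality in $\Alg_{\uc}/k$) acts on dimensions of objects of $\Alg_{\uc}/k$.

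The key linear-algebra input is then: for $G \in \Alg_{\uc}/k$, its dual $G^{\vee}$ in the relevant duality satisfies $\dim G^{\vee} = \dim G$ on identity components. This is standard — Serre duality for unipotent perfect group schemes swaps $\Ga$ with $\Ga$ (self-dual up to the Artin--Schreier twist) and finite \'etale groups with finite \'etale groups (dimension zero), so dimensions are preserved while the \'etale parts are irrelevant to $\chi^{0}$. Granting this, if $R\alg{\Gamma}(X,M)$ is the $(-3)$-shifted dual of $R\alg{\Gamma}(X,N)$, then $\alg{H}^{i}(X,M)$ is the dual of $\alg{H}^{3-i}(X,N)$ up to the distinction between identity components and \'etale quotients, hence $\dim \alg{H}^{i}(X,M) = \dim \alg{H}^{3-i}(X,N)$, and therefore
\[
	\chi^{0}(X,M)
	= \sum_{i} (-1)^{i} \dim \alg{H}^{i}(X,M)
	= \sum_{i} (-1)^{i} \dim \alg{H}^{3-i}(X,N)
	= - \sum_{j} (-1)^{j} \dim \alg{H}^{j}(X,N)
	= - \chi^{0}(X,N),
\]
the sign coming from the odd shift $3-i \equiv i+1 \pmod 2$.

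The main obstacle I anticipate is pinning down exactly which duality statement to cite and in what form, since the ind-rational pro-\'etale setup of \cite{Suz20a} phrases global duality with compact support cohomology $R\alg{\Gamma}_{c}$ and a dualizing complex rather than the naive Pontryagin dual. One clean route that avoids this subtlety entirely is to reduce to Proposition \ref{0025} via Proposition \ref{0043} and the general-base-field formula: by Theorem \ref{0003} applied to a well-chosen isogeny realizing $N$ (and its dual realizing $M$) — or more directly by the as-yet-unproven Theorem \ref{0030}/\ref{0053}, which identifies $\chi^{0}(X,\mathcal{N})$ with $\deg R\Lie\mathcal{N}$ — the statement $\chi^{0}(X,N) = -\chi^{0}(X,M)$ becomes literally Proposition \ref{0025}. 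If the intended proof is the self-contained duality argument rather than this forward reference, then the work is to extract the dimension-preservation of Serre duality on $\Alg_{\uc}/k$ and to check the degree bookkeeping; both are routine once the correct duality isomorphism is in hand, and neither requires finiteness of $k$.
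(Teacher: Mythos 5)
Your main route is exactly the paper's proof: the paper disposes of this proposition with a one-line citation of \cite[Theorem 3.1.3]{Suz20a}, i.e.\ the geometric (ind-rational pro-\'etale) duality for finite flat group schemes over $X$ that you invoke, with the sign arising from the odd total parity of the shift on the connected unipotent parts just as you compute (the precise normalization in \cite{Suz20a} is a Serre-type dual with the connected part appearing in shifted degree, but the parity bookkeeping comes out the same). Only your suggested ``clean route'' via Theorem \ref{0030}/\ref{0053} should be discarded: it is circular, since the proof of Theorem \ref{0027} (hence of Theorem \ref{0030}) uses Propositions \ref{0025} and \ref{0026} to handle the multiplicative part.
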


\begin{proof}
	Let $\sheafhom_{k^{\ind\rat}_{\pro\et}}$ be
	the sheaf-Hom functor for $\Ab(k^{\ind\rat}_{\pro\et})$.
	Denote the functor
	$R \sheafhom_{k^{\ind\rat}_{\pro\et}}(\var, \Z)$ by $(\var)^{\SDual}$
	(the Serre dual functor).
	Then there exists a canonical isomorphism
		\[
				R \alg{\Gamma}(X, N)
			\cong
				R \alg{\Gamma}(X, M)^{\SDual}[-1]
		\]
	by \cite[Theorem 3.1.3]{Suz20a}.
	Both objects $R \alg{\Gamma}(X, N)$ and $R \alg{\Gamma}(X, M)$ are
	in $D^{b}(\Alg_{\uc} / k)$
	by Proposition \ref{0038}.
	Let $K_{0}(D^{b}(\Alg_{\uc} / k))$ be
	the Grothendieck group of $D^{b}(\Alg_{\uc} / k)$.
	Let
		\[
				\chi
			\colon
				K_{0}(D^{b}(\Alg_{\uc} / k))
			\to
				\Z
		\]
	be the unique homomorphism that assigns the dimension to an object of $\Alg_{\uc} / k$.
	Then
		\[
				\chi(R \alg{\Gamma}(X, M)^{\SDual})
			=
				\chi(R \alg{\Gamma}(X, M))
		\]
	by \cite[Proposition 5.3 (2)]{OS26}.
	As
		\[
					\chi^{0}(X, N)
				=
					\chi(R \alg{\Gamma}(X, N)),
			\quad
					\chi^{0}(X, M)
				=
					\chi(R \alg{\Gamma}(X, M)),
		\]
	the result follows.
\end{proof}


\section{%
	\texorpdfstring{Change in $\mu$ by degrees of Lie complexes}
	{Change in mu by degrees of Lie complexes}
}
\label{0046}

Let $X$ be a geometrically connected, proper, smooth curve
over a perfect field $k$ of characteristic $p > 0$.
Let $K$ be its function field.
Now we can interpret $\chi(X, \mathcal{N})$
in terms of $\deg(R \Lie \mathcal{N})$,
which gives a linearized version Theorems \ref{0003} and \ref{0004}.

\begin{theorem} \label{0027}
	Let $\mathcal{N}$ be a separated, quasi-finite, flat group scheme over $X$.
	Then
		\[
				\chi^{0}(X, \mathcal{N})
			=
				\deg(R \Lie \mathcal{N}).
		\]
\end{theorem}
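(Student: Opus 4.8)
The plan is to prove the identity $\chi^{0}(X,\mathcal{N}) = \deg R\Lie\mathcal{N}$ by a sequence of reductions, following the same strategy already used in Proposition \ref{0038}: first dispose of the infinitesimal part, then reduce to the étale-generic-fibre case, and finally compare the flat cohomology of $X$ with the cohomology over the open locus and the local rings at the bad points. The two sides of the asserted equality are both additive in short exact sequences of quasi-finite flat separated group schemes — the left side because of the long exact cohomology sequence and Proposition \ref{0038}, the right side because of the distinguished triangle for $R\Lie$ recalled from \cite{Ill72} — so every such reduction is legitimate provided one keeps track of how $\deg R\Lie$ behaves.

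First I would handle the case where $\mathcal{N}$ is \emph{finite} flat over $X$. Writing $M$ for the Cartier dual of $\mathcal{N}$, we have $\chi^{0}(X,\mathcal{N}) = -\chi^{0}(X,M)$ by Proposition \ref{0026} and $\deg R\Lie\mathcal{N} = -\deg R\Lie M$ by Proposition \ref{0025}; so it suffices to establish the identity for one of $\mathcal{N}$, $M$, and one can exploit this together with dévissage to the classical cases. Concretely, for $\mathcal{N}$ finite one embeds $\mathcal{N}$ into an abelian scheme $\mathcal{A}_1$ over $X$ (using \cite{BBM82} Zariski-locally and patching, or directly when such an embedding exists globally), with quotient abelian scheme $\mathcal{A}_2$, giving $0 \to \mathcal{N} \to \mathcal{A}_1 \to \mathcal{A}_2 \to 0$ and hence $R\Lie\mathcal{N} \cong [\Lie\mathcal{A}_1 \to \Lie\mathcal{A}_2][-1]$ by \eqref{0042}. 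On the cohomology side the same sequence gives $\chi^{0}(X,\mathcal{N}) = \chi^{0}(X,\mathcal{A}_1) - \chi^{0}(X,\mathcal{A}_2)$, and one is reduced to the statement that for a Néron model / abelian scheme $\mathcal{A}$ over $X$ the quantity $\chi^{0}(X,\mathcal{A})$ differs from $\deg\Lie\mathcal{A} = -\deg(\pi_\ast\Omega^d_{\mathcal{A}/X})$ by a term that cancels in the difference — this is exactly the kind of coherent-duality / Riemann--Roch bookkeeping encoded in \cite[Theorem 3.4.1]{Suz20a} and the Serre-duality computations in the proof of Proposition \ref{0023}, so the finite case follows.

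Next I would reduce the general quasi-finite case to the finite case. Let $N$ be the generic fibre of $\mathcal{N}$, let $N^{0} \subset N$ be its identity (infinitesimal) component and $\mathcal{N}^{0}$ its schematic closure; since $\mathcal{N}^0$ is finite over $X$, the additivity of both sides in $0 \to \mathcal{N}^0 \to \mathcal{N} \to \mathcal{N}/\mathcal{N}^0 \to 0$ lets us assume $N$ is étale. Then pick a dense open $U \subset X$ over which $\mathcal{N}$ is étale; the distinguished triangle \eqref{0039}
	\[
			R\alg{\Gamma}_{c}(U,\mathcal{N})
		\to
			R\alg{\Gamma}(X,\mathcal{N})
		\to
			\bigoplus_{v \in X\setminus U} R\alg{\Gamma}(\Order_{v}/k,\mathcal{N})
	\]
expresses $\chi^{0}(X,\mathcal{N})$ as a sum of $\chi^{0}$ of $R\pi_!\mathcal{N}$ over $U$ (which, $\mathcal{N}$ being étale with finite geometric fibres, is a bounded complex of finite étale groups and so has $\chi^{0} = 0$) and contributions $\chi^{0}(\Order_v/k,\mathcal{N})$ at the finitely many bad points. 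By \cite[Proposition 5.2.3.5]{Suz20b} this local term only sees the finite part $\mathcal{M}_v \subset \mathcal{N}$ over $\Order_v$, and by \cite[Proposition 3.7]{OS23} (plus the local analogue of \eqref{0042}) it equals $-\deg_v R\Lie\mathcal{M}_v$ up to the appropriate normalization. Meanwhile $R\Lie\mathcal{N}$ is supported on the bad locus in the sense that $\deg R\Lie\mathcal{N} = \sum_v \deg_v R\Lie\mathcal{M}_v$ (cf.\ Proposition \ref{0043}, which gives $\chi(K,R\Lie N) = 0$), so the two sums match term by term.

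The main obstacle I anticipate is not any single reduction but the careful normalization in the finite case: matching $\chi^{0}(X,\mathcal{N})$ — defined via \emph{identity components} of flat cohomology groups living in $\Alg_{\uc}/k$ — with the coherent invariant $\deg R\Lie\mathcal{N}$, correctly accounting for the discrepancy between $R\Gamma(X,\mathcal{A})$ and $\Lie\mathcal{A}$ through Serre duality and Riemann--Roch as in Proposition \ref{0023}. Getting the signs and the genus/$\deg\Order_X$ contributions to cancel in the right way is where the real content lies; once the finite case is pinned down, the dévissage to infinitesimal-then-étale and the localization via \eqref{0039} are formal, using only additivity of both sides and the vanishing of the étale contribution over $U$.
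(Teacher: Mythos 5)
The dévissage skeleton of your proposal (additivity of both sides, splitting off the finite part of the generic fibre, and the localization triangle reducing the étale--generic--fibre case to local terms at the bad points, computed via \cite[Proposition 3.7]{OS23} and Proposition \ref{0044}) agrees with the paper and is sound. The genuine gap is in your treatment of the finite case. First, a global resolution $0 \to \mathcal{N} \to \mathcal{A}_{1} \to \mathcal{A}_{2} \to 0$ by abelian schemes over $X$ does not exist in general: \cite[Theorem 3.1.1]{BBM82} only gives embeddings Zariski-locally, and while Proposition \ref{0023} can patch a \emph{canonical isomorphism of determinant line bundles} from local data, you cannot patch local embeddings into a global resolution, nor compute the global quantities $\chi^{0}(X,\mathcal{N})$ and $\deg R \Lie \mathcal{N}$ Zariski-locally. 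Second, and more seriously, even when a global resolution exists, your reduction leaves you with the assertion that $\chi^{0}(X,\mathcal{A}_{1}) - \chi(X,\Lie \mathcal{A}_{1}) = \chi^{0}(X,\mathcal{A}_{2}) - \chi(X,\Lie \mathcal{A}_{2})$ for isogenous abelian (or N\'eron) schemes. This is not ``Riemann--Roch bookkeeping'': by the $\mu$-invariant formula \eqref{0033} this quantity is $\theta_{A}$, an invariant of the slopes of the $L$-function, and identifying it is exactly the (only partially proved) main result of \cite{LLSTT21}; neither \cite[Theorem 3.4.1]{Suz20a} (which describes the structure of $\alg{H}^{i}(X,\mathcal{A})$ but gives no such dimension formula) nor the Serre-duality computation in Proposition \ref{0023} (which only compares $\det \Lie$ of an abelian scheme and its dual) provides a bridge between the perfect-group-scheme side $\chi^{0}$ and the coherent side. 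In effect this step is equivalent to the theorem itself in the relevant case, so as written your finite case is circular, or at best reproduces the \emph{conditional} proof given in the last section of the paper.

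The missing idea is the actual flat-versus-coherent bridge, which in the paper is elementary and avoids abelian schemes entirely: for a vector bundle $G$ on $X$ one has $R \alg{\Gamma}(X, G) \cong R \Gamma(X, G) \tensor_{k} \Ga$, hence $\chi^{0}(X, G) = \chi(X, \Lie G)$; a finite flat $\mathcal{N}$ whose Cartier dual has height one embeds into an exact sequence of vector groups $0 \to \mathcal{N} \to G \to H \to 0$ by \cite[Chapter III, Section 6]{Mil06}, which together with Proposition \ref{0043} settles that case; multiplicative parts are then handled by the duality statements, Propositions \ref{0025} and \ref{0026}, which flip them to the étale--generic--fibre case, and local-local parts by dualizing to a filtration with height-one subquotients. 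If you replace your abelian-scheme step by this vector-group argument (keeping your duality and localization steps), the proof closes unconditionally.
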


\begin{proof}
	If the multiplication by $p$ is invertible on $\mathcal{N}$,
	then $\mathcal{N}$ is \'etale over $X$.
	Hence, $\chi^{0}(X, \mathcal{N}) = 0$ by Proposition \ref{0064}
	and $R \Lie \mathcal{N} = 0$, giving the desired equality in this case.
	Therefore, we may assume that $\mathcal{N}$ is killed by a power of $p$.
	
	Let $G$ be any vector bundle on $X$ (viewed as a vector group).
	We show that
		\begin{equation} \label{0024}
				\chi^{0}(X, G)
			=
				\chi(X, \Lie G).
		\end{equation}
	The natural $k$-linear structures on $G$ and $\Ga$ gives a morphism
		\begin{equation} \label{0066}
				R \Gamma(X, G) \tensor_{k} \Ga
			\to
				R \alg{\Gamma}(X, G).
		\end{equation}
	For any $i \in \Z$, the morphism induced on the $i$-th cohomology takes the form
		\[
				H^{i}(X, G) \tensor_{k} \Ga
			\to
				\alg{H}^{i}(X, G).
		\]
	The left-hand and right-hand sides are
	the pro-\'etale sheafifications of the presheaves
		\[
					k'
				\mapsto
					H^{i}(X, G) \tensor_{k} k'
			\quad \text{and} \quad
					k'
				\mapsto
					H^{i}(X \times_{k} k', G),
		\]
	respectively, where $k'$ runs over ind-rational $k$-algebras.
	We have an isomorphism
		\[
				H^{i}(X, G) \tensor_{k} k'
			\isomto
				H^{i}(X \times_{k} k', G)
		\]
	by the flat base change theorem for coherent cohomology.
	Hence, \eqref{0066} is an isomorphism, giving \eqref{0024}.
	
	Assume that $\mathcal{N}$ is finite flat
	with Cartier dual of height one.
	Then we have an exact sequence
	$0 \to \mathcal{N} \to G \to H \to 0$ over $X$
	with $G$ and $H$ vector groups
	by \cite[Chapter III, Theorem 5.1]{Mil06}.
	Hence, \eqref{0067}, \eqref{0024} and Proposition \ref{0043}
	give the result in this case.
	
	Assume next that the generic fiber of $\mathcal{N}$ is \'etale.
	Let $U \subset X$ be dense open where $\mathcal{N}$ is \'etale.
	Then $R \alg{\Gamma}_{c}(U, \mathcal{N})$ has finite \'etale cohomologies
	by Proposition \ref{0064}
	and $R \alg{\Gamma}_{c}(U, R \Lie \mathcal{N}) = 0$.
	Hence, it is enough to show that
		\[
				\chi^{0}(\Order_{v}, \mathcal{N})
			=
				\chi(\Order_{v}, R \Lie \mathcal{N})
		\]
	for each $v \in X \setminus U$
	(where the left-hand side is the alternating sum of
	the dimensions of $\alg{H}^{i}(\Order_{v}, \mathcal{N})$
	and the right-hand side is the alternating sum
	of the dimensions of $H^{i}(\Order_{v}, R \Lie \mathcal{N})$ over $k(v)$).
	This equality follows from
		\[
				\chi^{0}(\Order_{v}, \mathcal{N})
			=
				- \dim \alg{H}^{1}(\Order_{v}, \mathcal{N})
			=
				- \length_{\Order_{v}}
					e^{\ast}
					\Omega^{1}_{\mathcal{N} / \Order_{v}}
			=
				\chi(\Order_{v}, R \Lie \mathcal{N})
		\]
	by \cite[Propositions 3.2 and 3.7]{OS26}
	and Proposition \ref{0044},
	where $e$ denotes the zero section for $\mathcal{N}$.
	
	In general, let $N$ be the generic fiber of $\mathcal{N}$.
	Let
		\[
			0 \subset N^{m} \subset N^{0} \subset N
		\]
	be the multiplicative part and the identity component, respectively.
	Let
		\[
			0 \subset \mathcal{N}^{m} \subset \mathcal{N}^{0} \subset \mathcal{N}
		\]
	be their schematic closures in $\mathcal{N}$.
	Then $\mathcal{N}^{m}$ and $\mathcal{N}^{0}$ are finite flat
	by the argument in the proof of Proposition \ref{0038},
	and the subquotients of this filtration are all
	separated, quasi-finite and flat.
	Since both sides of the desired equality are additive in short exact sequences
	by \eqref{0067} and \eqref{0069},
	it is enough to check the statement for each subquotient.
	The statement for $\mathcal{N}^{m}$ reduces to
	the statement for its Cartier dual
	by Propositions \ref{0025} and \ref{0026},
	which has \'etale generic fiber.
	Hence, this case is done.
	For the statement for $\mathcal{N}^{0} / \mathcal{N}^{m}$,
	its Cartier dual is infinitesimal,
	so it has a filtration with height one subquotients,
	which is done.
	The quotient $\mathcal{N} / \mathcal{N}^{0}$ has \'etale generic fiber,
	so it is done.
\end{proof}

The following answers \cite[Chapter III, Problem 8.10]{Mil06}.

\begin{theorem} \label{0030}
	Assume that $k$ is finite with $q$ elements.
	Let $\mathcal{N}$ be a separated, quasi-finite, flat group scheme over $X$.
	Then
		\[
				\chi(X, \mathcal{N})
			=
				q^{\deg(R \Lie \mathcal{N})}.
		\]
\end{theorem}

\begin{proof}
	By Proposition \ref{0038},
	we may apply Proposition \ref{0005} \eqref{0002}
	to $G = R \alg{\Gamma}(X, \mathcal{N})$.
	Since $R \Gamma(k, G) \cong R \Gamma(X, \mathcal{N})$
	by \eqref{0065},
	the desired equality follows from Theorem \ref{0027}.
\end{proof}

The following gives a function field version of
the ``Isogeny formula (First form)'' at the end of \cite{Sch87}.

\begin{theorem} \label{0031}
	Let $f \colon A_{1} \to A_{2}$ be an isogeny of abelian varieties over $K$.
	Let $\mathcal{N}_{f}$ be the schematic closure
	of the kernel of $f$ in $\mathcal{A}_{1}$.
	Then we have
		\[
				\mu_{A_{2} / K} - \mu_{A_{1} / K}
			=
					\deg(R \Lie \mathcal{N}_{f})
				-
					c(f).
		\]
	In particular, if $f$ extends to an isogeny on the N\'eron models, then
		\[
				\mu_{A_{2} / K} - \mu_{A_{1} / K}
			=
				\deg(R \Lie \mathcal{N}_{f}).
		\]
\end{theorem}

\begin{proof}
	This follows from Theorems \ref{0027} and \ref{0003}.
\end{proof}


\section{Isogeny invariance of the BSD formula}

As an application of our methods and Theorem \ref{0030},
we will prove that the validity of the Birch--Swinnerton-Dyer conjecture
for abelian varieties over function fields
is invariant under an arbitrary isogeny,
whose degree may be divisible by the characteristic $p$.
For this, it is convenient to use the Weil-\'etale version of the conjecture
given in \cite{GS20}.

We first recall Weil-\'etale cohomology over finite fields from
\cite[Section 5]{GS20} and \cite[Section 10]{Suz20b}
in the form convenient in our setting.
Let $k$ be a finite field with $q$ elements.
Let $F_{k} \colon \closure{k} \isomto \closure{k}$ be
the $q$-th power map.
Note that $\Gamma(\closure{k}, \var)$ is an exact functor
on $\Ab(k^{\ind\rat}_{\pro\et})$
and so $R \Gamma(\closure{k}, \var) = \Gamma(\closure{k}, \var)$.
For any $G \in D(k^{\ind\rat}_{\pro\et})$,
define $R \Gamma_{W}(k, G) \in D(\Ab)$ to be
the canonical mapping fiber of
$F_{k} - 1 \colon \Gamma(\closure{k}, G) \to \Gamma(\closure{k}, G)$,
so that it fits in a canonical distinguished triangle
	\[
			R \Gamma_{W}(k, G)
		\to
			\Gamma(\closure{k}, G)
		\stackrel{F_{k} - 1}{\longrightarrow}
			\Gamma(\closure{k}, G).
	\]
This is the group cohomology of the infinite cyclic group generated by $F_{k}$
with coefficients in $\Gamma(\closure{k}, G)$.
Let $H_{W}^{i}(k, G)$ be the $i$-th cohomology of $R \Gamma_{W}(k, G)$.

Below we only use this construction when $G$ is bounded below
and the $i$-th cohomology object $H^{i}(G)$ for any $i$
as a functor on ind-rational $k$-algebras
commutes with filtered direct limits
(which implies that pro-\'etale cohomology with coefficients in $H^{i}(G)$
agrees with \'etale cohomology with coefficients in $H^{i}(G)$
as in the proof of Proposition \ref{0005}).
Assume these conditions on $G$.
Then the above $R \Gamma_{W}(k, G)$ agrees with
the object $R \Gamma(k_{W}, G)$ in
\cite[Section 10, the paragraphs before Proposition (10.3)]{Suz20b}
and with the object $R \Gamma_{W}(k, R \nu_{\ast} G)$ in
\cite[Section 5]{GS20},
where $\nu \colon \Spec k^{\ind\rat}_{\pro\et} \to \Spec k_{\et}$
is the morphism of sites defined by the inclusion functor.
In particular, we have a canonical distinguished triangle
	\begin{equation} \label{0071}
			R \Gamma(k, G)
		\to
			R \Gamma_{W}(k, G)
		\to
			R \Gamma(k, G) \tensor \Q[-1]
	\end{equation}
by \cite[Section 5, Equation (6)]{GS20}
(or more precisely \cite[Corollary 5.2]{Gei04}).
Since
$F_{k} - 1 \colon \Gamma(\closure{k}, G) \to \Gamma(\closure{k}, G)$
is zero if $G = \Z$,
we have $R \Gamma_{W}(k, \Z) \cong \Z \oplus \Z[-1]$.
Let $e \in H_{W}^{1}(k, \Z)$ be $1 \in \Z$ in this isomorphism.
Since $e \cup e \in H_{W}^{2}(k, \Z) = 0$
(where the cup product is for group cohomology of the infinite cyclic group),
we have a complex
	\[
			\cdots
		\stackrel{e}{\to}
			H_{W}^{i}(k, G)
		\stackrel{e}{\to}
			H_{W}^{i + 1}(k, G)
		\stackrel{e}{\to}
			\cdots
	\]
given by cup products with $e$.
This complex $(H_{W}^{\ast}(k, G), e)$ has torsion cohomologies
as noted in the second paragraph of \cite[Section 8]{GS20}
(by \cite[Corollary 5.2]{Gei04}).
When $H_{W}^{i}(k, G)$ is finitely generated for all $i$
and zero for almost all $i$,
the complex $(H_{W}^{\ast}(k, G), e)$ thus has finite cohomologies,
so that we can define
	\[
			\chi_{W}(k, G)
		:=
			\prod_{i}
				\bigl(
					\# H^{i}(H_{W}^{\ast}(k, G), e)
				\bigr)^{(-1)^{i}}
	\]
(which is denoted by $\chi(H_{W}^{\ast}(k, G), e)$
in the notation of \cite[Section 8]{GS20}).
If $G$ is bounded, then $H_{W}^{i}(k, G)$ is zero for almost all $i$.
If $G', G'' \in D(k^{\ind\rat}_{\pro\et})$ satisfy the same conditions as $G$
and $G' \to G \to G''$ is a distinguished triangle,
we have a bounded long exact sequence
	\[
			\cdots
		\to
			H_{W}^{i}(k, G')
		\to
			H_{W}^{i}(k, G)
		\to
			H_{W}^{i}(k, G'')
		\to
			\cdots
	\]
of finitely generated abelian groups compatible with the actions of $e$.
In this situation, the argument in
\cite[Proof of Theorem 7.1; Paragraph after Equation (15)]{Gei06}
shows that
	\begin{equation} \label{0070}
			\chi_{W}(k, G)
		=
			\chi_{W}(k, G')
			\chi_{W}(k, G'').
	\end{equation}

Let $X$ be a geometrically connected, proper, smooth curve over $k$
(finite with $q$ elements as above).
Let $K$ be its function field.
For $G \in D(X_{\fppf})$,
define
	\[
			R \Gamma_{W}(X, G)
		:=
			R \Gamma_{W}(k, R \alg{\Gamma}(X, G)).
	\]
It fits in a distinguished triangle
	\[
			R \Gamma_{W}(X, G)
		\to
			R \Gamma(X \times_{k} \closure{k}, G)
		\stackrel{F_{k} - 1}{\longrightarrow}
			R \Gamma(X \times_{k} \closure{k}, G).
	\]
Let $H_{W}^{i}(X, G)$ be the $i$-th cohomology of $R \Gamma_{W}(X, G)$.

We only use this construction
when $G$ is a bounded complex of group schemes locally of finite type.
In this case, $\alg{H}^{i}(X, G)$ for any $i$
as a functor on ind-rational $k$-algebras
commutes with filtered direct limits by \cite[Proposition 2.7.8]{Suz20a}
and hence the above $R \Gamma_{W}(X, G)$ agrees with the object
$R \Gamma_{W}(X, R \varepsilon_{\ast} G)$ in \cite[Section 5]{GS20},
where $\varepsilon \colon X_{\fppf} \to X_{\et}$
is the morphism of sites defined by the inclusion functor.
When $H_{W}^{i}(X, G)$ is finitely generated for all $i$
and zero for almost all $i$,
we define $\chi_{W}(X, G) := \chi_{W}(k, R \alg{\Gamma}(X, G))$.

Let $A / K$ be an abelian variety with N\'eron model $\mathcal{A} / X$.
Let $r$ be the rank of $A(K)$.
Let $L(A, t)$ be the Hasse--Weil $L$-function of $A / K$
as a rational function of $t$.
We recall the Weil-\'etale BSD formula from
\cite[Theorem 1.1, Proposition 8.4]{GS20}.
We saw at the beginning of Section \ref{0037}
that $R \alg{\Gamma}(X, G)$ is bounded.
Assume that the Tate--Shafarevich group $\Sha(A)$ is finite.
Then \cite[Theorem 1.1, Proposition 8.4]{GS20} says that
$H_{W}^{i}(X, \mathcal{A})$ is finitely generated for all $i$ and we have
	\begin{equation} \label{0028}
			\lim_{t \to q^{-1}}
				\frac{
					L(A, t)
				}{
					(1 - q t)^{r}
				}
		=
			\frac{
				q^{\chi(X, \Lie \mathcal{A})}
			}{
				\chi_{W}(X, \mathcal{A})
			}.
	\end{equation}
This formula (under the assumption of the finiteness of $\Sha(A)$) is proved by
\cite{KT03}.
Here we prove, without using the result of \cite{KT03},
that the validity of this formula is invariant under an arbitrary isogeny on $A$.
This is \cite[Chapter III, Problem 9.10]{Mil06}.

\begin{theorem} \label{0051}
	Let $f \colon A_{1} \to A_{2}$ be an isogeny of abelian varieties over $K$.
	Assume that $\Sha(A_{1})$ and $\Sha(A_{2})$ are finite.%
	\footnote{
		The finiteness of $\Sha(A_{1})$ is equivalent to the finiteness of $\Sha(A_{2})$
		as essentially mentioned in
		\cite[Chapter I, Remark 6.14 (c) and Chapter III, Corollary 9.3]{Mil06}.
	}
	Then the formula \eqref{0028} holds for $A_{1}$
	if and only if the formula \eqref{0028} holds for $A_{2}$.
\end{theorem}

\begin{proof}
	We need to show that
		\[
				\frac{
					\chi_{W}(X, \mathcal{A}_{2})
				}{
					\chi_{W}(X, \mathcal{A}_{1})
				}
			=
				q^{
						\chi(X, \Lie \mathcal{A}_{2})
					-
						\chi(X, \Lie \mathcal{A}_{1})
				}.
		\]
	Let $\mathcal{A}_{2}'$, $\mathcal{N}_{f}$, $D_{f, v}$ be
	as defined in Section \ref{0018}.
	Since $\mathcal{N}_{f}$ is killed by multiplication by a positive integer,
	we have
	$R \Gamma(X, \mathcal{N}_{f}) \cong R \Gamma_{W}(X, \mathcal{N}_{f})$
	by \eqref{0071}.
	These isomorphic objects are bounded and have finite cohomologies
	by Theorem \ref{0049}.
	Hence,
		$
				\chi(X, \mathcal{N}_{f})
			=
				\chi_{W}(X, \mathcal{N}_{f})
		$.
	With this and \eqref{0070}, we have
		\[
				\chi_{W}(X, \mathcal{A}_{1})
			=
				\chi_{W}(X, \mathcal{N}_{f})
				\chi_{W}(X, \mathcal{A}_{2}')
			=
				\chi(X, \mathcal{N}_{f})
				\chi_{W}(X, \mathcal{A}_{2}').
		\]
	
	The group $D_{f, v}$ for any $v \in X_{0}$ is
	killed by multiplication by a positive integer
	since its identity component is unipotent.
	Hence, $\chi_{W}(k, \Res_{k(v) / k} D_{f, v})$ is well-defined
	and agrees with
		\[
				\chi(k, \Res_{k(v) / k} D_{f, v})
			=
				\chi(k(v), D_{f, v})
		\]
	by the same argument.
	Hence, by \eqref{0029} and \eqref{0070},
	we have
		\[
				\chi_{W}(X, \mathcal{A}_{2})
			=
				\chi_{W}(X, \mathcal{A}_{2}')
				\prod_{v}
					\chi(k(v), D_{f, v}).
		\]
	Therefore,
		\begin{equation} \label{0072}
				\frac{
					\chi_{W}(X, \mathcal{A}_{2})
				}{
					\chi_{W}(X, \mathcal{A}_{1})
				}
			=
				\frac{
					\prod_{v}
						\chi(k(v), D_{f, v})
				}{
					\chi(X, \mathcal{N}_{f})
				}.
		\end{equation}
	
	Arguing similarly, we have
		\begin{equation} \label{0034}
				\chi(X, \Lie \mathcal{A}_{1})
			=
					\chi(X, R \Lie \mathcal{N}_{f})
				+
					\chi(X, \Lie \mathcal{A}_{2}'),
		\end{equation}
		\begin{equation} \label{0035}
				\chi(X, \Lie \mathcal{A}_{2})
			=
					\chi(X, \Lie \mathcal{A}_{2}')
				+
					\sum_{v}
						[k(v) : k]
						\dim_{k(v)}(\Lie D_{f, v}).
		\end{equation}
	Therefore,
		\begin{equation} \label{0073}
					\chi(X, \Lie \mathcal{A}_{2})
				-
					\chi(X, \Lie \mathcal{A}_{1})
			=
					\sum_{v}
						[k(v) : k]
						\dim_{k(v)}(\Lie D_{f, v})
				-
					\chi(X, R \Lie \mathcal{N}_{f}).
		\end{equation}
	
	We have
		\begin{equation} \label{0074}
				\chi(k(v), D_{f, v})
			=
				q_{v}^{\dim D_{f, v}}
			=
				q_{v}^{\dim_{k(v)}(\Lie D_{f, v})}
		\end{equation}
	for each $v$ by Proposition \ref{0005} \eqref{0002}
	(where $q_{v} = \# k(v)$).
	We have
		\begin{equation} \label{0075}
				\chi(X, \mathcal{N}_{f})
			=
				q^{\deg(R \Lie \mathcal{N}_{f})}
			=
				q^{\chi(X, R \Lie \mathcal{N}_{f})}
		\end{equation}
	by Theorem \ref{0030} and Proposition \ref{0043}.
	Combining \eqref{0072}, \eqref{0073}, \eqref{0074} and \eqref{0075},
	we get the desired equality.
\end{proof}


\section{Base change stable invariants}
\label{0078}

The term $\deg(R \Lie \mathcal{N}_{f})$ in Theorem \ref{0031}
changes in a complicated manner
if $K$ is replaced by a finite extension.
We can replace it by a more stable invariant
at the expense of adding and subtracting
base change conductors of the abelian varieties
(Theorem \ref{0047}).
We first need to define this stable invariant in this section.

Let $X$ be a geometrically connected, proper, smooth curve
over a perfect field $k$ of characteristic $p > 0$.
Let $K$ be its function field.
Let $X^{\sep}$ be the normalization of $X$
in the separable closure $K^{\sep}$ of $K$.
Let $\mathcal{F}_{K^{\sep}} \in D^{b}(\Order_{X^{\sep}})$ be
(represented by) a perfect complex of $\Order_{X^{\sep}}$-modules.
Choose a finite Galois extension $L / K$
such that $\mathcal{F}_{K^{\sep}}$ can be obtained as a derived pullback of
a perfect complex $\mathcal{F}_{L}$ of $\Order_{Y}$-modules,
where $Y$ is the normalization of $X$ in $L$.

\begin{definition} \label{0077}
	Define a rational number by
		\[
				\deg_{X}(\mathcal{F}_{K^{\sep}})
			:=
				\frac{
					\deg(\mathcal{F}_{L})
				}{
					[L : K]
				},
		\]
	where the numerator on the right-hand side is the degree
	with respect to $k$
	(see Footnote \ref{0076}).
\end{definition}

Note that the input $\mathcal{F}_{K^{\sep}}$ of the operator $\deg_{X}$
is a perfect complex over $X^{\sep}$,
but its value $\deg_{X}(\mathcal{F}_{K^{\sep}})$ depends not only on $X^{\sep}$
but also on $X$
(hence the subscript $X$ in $\deg_{X}$).

\begin{proposition} \label{0032}
	Definition \ref{0077} is independent of the choice of $L$ or $\mathcal{F}_{L}$.
\end{proposition}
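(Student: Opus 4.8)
The plan is to show that the value $\deg \mathcal{F}_L / [L:K]$ does not change when we pass to a larger field. Given two choices $(L, \mathcal{F}_L)$ and $(L', \mathcal{F}_{L'})$ of a finite Galois extension together with a perfect complex descending $\mathcal{F}_{K^{\sep}}$, I would first reduce to a common refinement: let $M$ be a finite Galois extension of $K$ containing both $L$ and $L'$. It then suffices to prove that passing from $L$ to $M$ leaves the ratio unchanged, i.e. that
\[
		\frac{\deg \mathcal{F}_M}{[M:K]}
	=
		\frac{\deg \mathcal{F}_L}{[L:K]},
\]
where $\mathcal{F}_M$ is the derived pullback of $\mathcal{F}_L$ along the finite morphism $\varphi \colon Z \to Y$ (with $Z$ the normalization of $X$ in $M$). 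Here I use that $\mathcal{F}_{K^{\sep}}$, being pulled back from $\mathcal{F}_L$, is also pulled back from $\mathcal{F}_M = \varphi^\ast \mathcal{F}_L$, so the two descriptions over $M$ agree and we may compare $(L, \mathcal{F}_L)$ and $(M, \varphi^\ast \mathcal{F}_L)$ directly.

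The heart of the matter is then a statement about how the degree of a perfect complex behaves under a finite morphism of smooth proper curves over $k$: for $\varphi \colon Z \to Y$ finite of degree $[M:L]$ and $\mathcal{G}$ a perfect complex on $Y$, one has $\deg_k(\varphi^\ast \mathcal{G}) = [M:L] \cdot \deg_k(\mathcal{G})$. The key step is to reduce to the case of a line bundle via the determinant: since $\deg \mathcal{F} = \deg \det \mathcal{F}$ by definition and $\det$ commutes with derived pullback, it is enough to treat $\mathcal{G} = \det \mathcal{F}_L$, an honest invertible sheaf. For a line bundle on a curve the degree multiplies by the degree of a finite map (the usual pullback formula $\deg \varphi^\ast L = \deg(\varphi) \deg L$, valid over any base field after accounting for residue degrees, which is exactly what ``degree with respect to $k$'' bookkeeps). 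Combining this with $[M:L] \cdot [L:K] = [M:K]$ gives the claimed equality of ratios.

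The step I expect to require the most care is the compatibility of $\deg$ ``with respect to $k$'' with finite base change when $k$ itself is only perfect, not finite or algebraically closed: one must be careful that the normalizations $Y$, $Z$ may have a larger constant field than $k$ (e.g. if $L/K$ is not geometrically connected), so $\deg_k$ of a line bundle on $Y$ is $[k_Y : k]$ times its degree over its own constant field $k_Y$, and similarly for $Z$. Tracking these constant-field degrees through $\varphi \colon Z \to Y$ and checking they cancel correctly against $[M:K]/[L:K] = [M:L]$ is the genuinely delicate bookkeeping; the projection formula and the standard behavior of degrees under finite morphisms of curves handle the rest. Once the line-bundle case is pinned down, the general perfect-complex case follows formally from the determinant reduction, and independence of $\mathcal{F}_L$ (for fixed $L$) is immediate since any two descents of $\mathcal{F}_{K^{\sep}}$ to $\Order_Y$ become isomorphic after a further finite extension, to which we then apply the already-established invariance.
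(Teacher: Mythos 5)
Your proposal is correct and follows essentially the same route as the paper: after reducing via determinants (and a common refinement) to line bundles, everything comes down to the fact that pulling back along the finite morphism of normalized curves multiplies $k$-degrees by the function-field extension degree, which is exactly the equality $\deg \Order_{X}(v) = \deg \Order_{Y}(f^{-1}v)/[L:K]$ the paper's proof invokes. Your extra remarks (determinant compatible with derived pullback, descent of the isomorphism from $X^{\sep}$ to a finite level, residue-degree bookkeeping) are just the details the paper leaves implicit, and they check out.
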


\begin{proof}
	This reduces to the equality
		\[
				\deg(\Order_{X}(v))
			=
				\frac{
					\deg(\Order_{Y}(f^{-1} v))
				}{
					[L : K]
				}
			\quad
				(= [k(v) : k]),
		\]
	where $v$ is a closed point of $X$
	and $f \colon Y \to X$ is the structure morphism.
\end{proof}

On the other hand, if $\mathcal{F}$ is a perfect complex of $\Order_{X}$-modules,
then we denote its base change to $X^{\sep}$ by
$\mathcal{F}_{X^{\sep}}$.
We have $\deg(\mathcal{F}) = \deg_{X}(\mathcal{F}_{X^{\sep}})$
by Proposition \ref{0032}.

We can rewrite base change conductors
using this $\deg_{X}$ construction.
Let $A$ be an abelian variety over $K$
with N\'eron model $\mathcal{A}$ over $X$.
If $A$ has semistable reduction over a finite separable extension $L / K$
and $\mathcal{A}_{L}$ denotes the N\'eron model of $A \times_{K} L$,
then by \cite[Section 7.4, Corollary 4]{BLR90},
$\Lie \mathcal{A}_{L}$ commutes with base change:
	\[
			g^{\ast} \Lie \mathcal{A}_{L}
		\isomto
			\Lie \mathcal{A}_{L'},
	\]
where $L' / L$ is a finite separable extension
and $g \colon Y' \to Y$ the morphism
on the normalizations of $X$ in $L$ and $L'$.
Hence, we may define $\Lie \mathcal{A}_{K^{\sep}}$
to be the base change of $\Lie \mathcal{A}_{L}$ to $X^{\sep}$
for any choice of such an $L$.
It is a locally free sheaf on $X^{\sep}$ of finite rank.

On the other hand, we have the base change
$\Lie \mathcal{A}_{X^{\sep}}$
of $\Lie \mathcal{A}$ to $X^{\sep}$.

\begin{proposition} \label{0045}
	We have
		\[
				c(A)
			=
					\deg_{X}(\Lie \mathcal{A}_{K^{\sep}})
				-
					\deg_{X}(\Lie \mathcal{A}_{X^{\sep}}).
		\]
\end{proposition}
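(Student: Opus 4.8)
The plan is to compare $\Lie \mathcal{A}$ with $\Lie \mathcal{A}_{L}$ over a curve $Y$ that covers $X$, and to match the resulting local discrepancies with Chai's local base change conductors $c_{v}(A)$.

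I would first fix a finite Galois extension $L / K$ over which $A$ acquires semistable reduction at every place, take $f \colon Y \to X$ to be the normalization of $X$ in $L$, and write $\mathcal{A}_{L}$ for the N\'eron model of $A \times_{K} L$ over $Y$. By the base change compatibility of $\Lie$ of the N\'eron model under semistable reduction recalled just before the statement, $\Lie \mathcal{A}_{K^{\sep}}$ is the pullback of $\Lie \mathcal{A}_{L}$, so that $\deg_{X} \Lie \mathcal{A}_{K^{\sep}} = \deg \Lie \mathcal{A}_{L} / [L : K]$; and $\deg_{X} \Lie \mathcal{A}_{X^{\sep}} = \deg \Lie \mathcal{A}$ as noted after Proposition~\ref{0032}. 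The task thus reduces to proving
\[
\deg \Lie \mathcal{A}_{L} = [L : K] \bigl( \deg \Lie \mathcal{A} + c(A) \bigr).
\]

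Next I would invoke the N\'eron mapping property of $\mathcal{A}_{L}$ for the smooth separated $Y$-group scheme $\mathcal{A} \times_{X} Y$ to obtain a canonical morphism $\mathcal{A} \times_{X} Y \to \mathcal{A}_{L}$ restricting to the identity on generic fibers; on Lie algebras this yields a map $f^{\ast} \Lie \mathcal{A} \to \Lie \mathcal{A}_{L}$ of locally free $\Order_{Y}$-modules of rank $\dim A$ that is an isomorphism at the generic point, hence injective. Let $\mathcal{Q}$ be its cokernel, a torsion coherent sheaf on $Y$. Because the determinant of perfect complexes is multiplicative in short exact sequences and $\deg f^{\ast} \mathcal{L} = [L : K] \deg \mathcal{L}$ for line bundles $\mathcal{L}$ --- which is exactly the content of Proposition~\ref{0032} --- the sequence $0 \to f^{\ast} \Lie \mathcal{A} \to \Lie \mathcal{A}_{L} \to \mathcal{Q} \to 0$ gives $\deg \Lie \mathcal{A}_{L} = [L : K] \deg \Lie \mathcal{A} + \deg \mathcal{Q}$. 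It would then remain to show $\deg \mathcal{Q} = [L : K] c(A)$. This last step is purely local: for $w \in Y_{0}$ lying over $v \in X_{0}$ with ramification index $e_{w}$ and residue field $k(w)$, I would use that N\'eron models and $\Lie$ commute with localization and completion of the base and that lengths are unchanged under completion, so that $\mathcal{Q}_{w}$ has the same $\Order_{Y, w}$-length as the $\Order_{L_{w}}$-module $\Lie \mathcal{A}_{L_{w}} / (\Lie \mathcal{A}_{K_{v}} \otimes_{\Order_{K_{v}}} \Order_{L_{w}})$ entering Chai's definition of $c_{v}(A)$ relative to $L_{w} / K_{v}$ (here $\mathcal{A}_{K_{v}}$ and $\mathcal{A}_{L_{w}}$ are the N\'eron models of $A \times_{K} K_{v}$ and $A \times_{K} L_{w}$, legitimate since $A$ has semistable reduction over $L_{w}$ and $L_{w} / K_{v}$ has ramification index $e_{w}$); hence $\length_{\Order_{Y, w}} \mathcal{Q}_{w} = e_{w}\, c_{v}(A)$. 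Summing over $w$ and $v$ and using the fundamental identity $\sum_{w \mid v} e_{w} [k(w) : k(v)] = [L : K]$,
\[
\deg \mathcal{Q} = \sum_{v \in X_{0}} \sum_{w \mid v} e_{w}\, c_{v}(A)\, [k(w) : k] = \sum_{v \in X_{0}} c_{v}(A)\, [L : K]\, [k(v) : k] = [L : K]\, c(A),
\]
which yields the displayed equality and the proposition.

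The part that needs the most care is the bookkeeping that glues the two descriptions together: one must ensure a single $L$ simultaneously trivializes the Galois descent implicit in $\deg_{X} \Lie \mathcal{A}_{K^{\sep}}$ and is compatible with Chai's definition as computed over the complete fields $K_{v}$ and $L_{w}$ --- both points resting on the base change compatibility of $\Lie$ for semistable N\'eron models and on N\'eron models commuting with completion --- after which the argument is the standard ramification- and residue-degree accounting encapsulated in Proposition~\ref{0032}.
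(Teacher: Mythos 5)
Your proposal is correct and follows essentially the same route as the paper: both compare $\Lie \mathcal{A}_{L}$ over $Y$ with the pullback of $\Lie \mathcal{A}$, measure the discrepancy by local lengths of the quotient sheaf, identify these with Chai's local conductors via the compatibility of N\'eron models with completion, and finish with ramification/residue-degree bookkeeping. The only (immaterial) difference is that you sum over all $w \mid v$ using $\sum_{w \mid v} e_{w}[k(w):k(v)] = [L:K]$, whereas the paper uses the Galois hypothesis to fix one $w$ per $v$ and weight by $[k(v):k]/e_{v}$.
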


\begin{proof}
	Let $L / K$ be a finite Galois extension
	over which $A$ has semistable reduction everywhere.
	Let $Y$ be the normalization of $X$ in $L$.
	Let $k_{L}$ be the constant field of $Y$.
	We have
		\begin{align*}
			&
					\deg_{X}(\Lie \mathcal{A}_{K^{\sep}})
				-
					\deg_{X}(\Lie \mathcal{A}_{X^{\sep}}).
			\\
			&	=
					\sum_{w \in Y_{0}}
						\frac{
							[k_{L}(w) : k]
						}{
							[L : K]
						}
						\length_{\Order_{w}}
							\frac{
								\Lie \mathcal{A}_{L}
							}{
								\Lie \mathcal{A} \tensor_{\Order_{X}} \Order_{Y}
							}
			\\
			&	=
					\sum_{v \in X_{0}}
						\frac{
							[k(v) : k]
						}{
							e_{v}
						}
						\length_{\Order_{w}}
							\frac{
								\Lie \mathcal{A}_{L}
							}{
								\Lie \mathcal{A} \tensor_{\Order_{X}} \Order_{Y}
							},
		\end{align*}
	where, in the third line,
	$w$ is any point of $Y$ above $v$ that we fix for each $v$
	and $e_{v}$ is the ramification index of $L / K$ at $v$.
	The quantity in the third line is equal to the definition of $c(A)$
	(Definition \ref{0061} \eqref{0060}).
\end{proof}

We stabilize $R \Lie \mathcal{N}_{f}$ as follows.
Let $f \colon A_{1} \to A_{2}$ be an isogeny of abelian varieties over $K$.
Let $L$ be a finite separable extension of $K$
over which $A_{1}$ has semistable reduction.
Let $\mathcal{A}_{i, L}$ be the N\'eron model of $A_{i} \times_{K} L$.
Then the induced morphism
$\Tilde{f}_{L} \colon \mathcal{A}_{1, L} \to \mathcal{A}_{2, L}$
is an isogeny.
Let $\mathcal{N}_{f, L}$ be its kernel.
Then $R \Lie \mathcal{N}_{f, L}$ commutes with base change:
	\[
			g^{\ast} R \Lie \mathcal{N}_{f, L}
		\isomto
			R \Lie \mathcal{N}_{f, L'},
	\]
where $L' / L$ is a finite separable extension
and $g \colon Y' \to Y$ the morphism
of the normalizations of $X$ in $L$ and $L'$.
Hence, we may define $R \Lie \mathcal{N}_{f, K^{\sep}}$ to be
the base change of $R \Lie \mathcal{N}_{f, L}$ to $X^{\sep}$
for any choice of such an $L$.
It is a perfect complex of $\Order_{X^{\sep}}$-modules.


\section{%
	\texorpdfstring{Change in $\mu$ with base change conductors}
	{Change in mu with base change conductors}
}

With these stabilized objects,
we can give a stabilized version of Theorem \ref{0031}:
Let $X$ be a geometrically connected, proper, smooth curve
over a perfect field $k$ of characteristic $p > 0$.
Let $K$ be its function field.

\begin{theorem} \label{0047}
	Let $f \colon A_{1} \to A_{2}$ be an isogeny of abelian varieties over $K$.
	Let $R \Lie \mathcal{N}_{f, K^{\sep}}$ be as defined
	at the end of Section \ref{0078}.
	Then
		\[
				\mu_{A_{2} / K} - \mu_{A_{1} / K}
			=
					\deg_{X}(R \Lie \mathcal{N}_{f, K^{\sep}})
				+ 
					c(A_{2}) - c(A_{1}).
		\]
\end{theorem}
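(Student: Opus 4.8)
The plan is to combine Theorem~\ref{0031} with Proposition~\ref{0045}, reducing everything to a single identity relating the three ``Lie-complex degree'' quantities over $X$, over $X^{\sep}$, and the local conductor $c(f)$. Concretely, Theorem~\ref{0031} gives
\[
	\mu_{A_{2}/K} - \mu_{A_{1}/K}
	=
	\deg R\Lie\mathcal{N}_{f} - c(f),
\]
so it suffices to prove the purely geometric identity
\[
	\deg R\Lie\mathcal{N}_{f} - c(f)
	=
	\deg_{X} R\Lie\mathcal{N}_{f,K^{\sep}}
	+ c(A_{2}) - c(A_{1}).
\]
Using Proposition~\ref{0045} to rewrite $c(A_{i}) = \deg_{X}\Lie\mathcal{A}_{i,K^{\sep}} - \deg_{X}\Lie\mathcal{A}_{i,X^{\sep}}$, and using $\deg\mathcal{F} = \deg_{X}\mathcal{F}_{X^{\sep}}$ for any perfect complex on $X$, the right-hand side becomes a sum of $\deg_{X}$ terms over $X^{\sep}$. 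So the whole statement collapses to an equality of rational numbers, each a $\deg_{X}$ of a perfect complex on $X^{\sep}$, together with the single scalar $c(f)$.

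Next I would pick a finite separable (even Galois) extension $L/K$, with $Y$ the normalization of $X$ in $L$, over which both $A_{1}$ and $A_{2}$ acquire semistable reduction everywhere; then by Bosch--L\"utkebohmert--Raynaud (cited after Proposition~\ref{0000}) the morphism $\Tilde{f}_{L}\colon \mathcal{A}_{1,L}\to\mathcal{A}_{2,L}$ is a genuine isogeny, with finite flat kernel $\mathcal{N}_{f,L}$, and $\mathcal{N}_{f,L}$ pulls back to $\mathcal{N}_{f,K^{\sep}}$. The strategy is to compare the N\'eron models over $X$ pulled back to $Y$ with the N\'eron models $\mathcal{A}_{i,L}$ over $Y$. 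Over $Y$ one has canonical maps $\mathcal{A}_{i}\times_{X}Y \to \mathcal{A}_{i,L}$ (by the N\'eron mapping property); on Lie algebras these give the base-change-conductor cokernels, of $\Order_{Y}$-length computing $[L:K]\,c(A_{i})/(\text{something})$ as in Proposition~\ref{0045}. On the other hand $R\Lie\mathcal{N}_{f}$ pulled back to $Y$ sits in the distinguished triangle $[\,\Lie(\mathcal{A}_{1}\times_{X}Y) \to \Lie(\mathcal{A}_{2}'\times_{X}Y)\,][-1]$ coming from \eqref{0042}, while $R\Lie\mathcal{N}_{f,L} = [\,\Lie\mathcal{A}_{1,L}\to\Lie\mathcal{A}_{2,L}\,][-1]$. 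Chasing the commutative square of Lie algebras over $Y$ relating $\mathcal{A}_{i}\times_{X}Y$, $\mathcal{A}_{2}'\times_{X}Y$, and $\mathcal{A}_{i,L}$ expresses the difference $\deg_{Y}R\Lie(\mathcal{N}_{f}\times_{X}Y) - \deg_{Y}R\Lie\mathcal{N}_{f,L}$ as an alternating sum of lengths of base-change-conductor cokernels, which upon dividing by $[L:K]$ is exactly $c(A_{2}) - c(A_{1})$ minus a term accounting for the failure of $\mathcal{A}_{2}'\to\mathcal{A}_{2}$ to be an isomorphism on special fibers --- and that last term is $c(f)$ by \eqref{0013} and \eqref{0041}. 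Finally, $\deg_{Y}(R\Lie\mathcal{N}_{f}\times_{X}Y)/[L:K] = \deg R\Lie\mathcal{N}_{f}$ by the multiplicativity of degree under pullback (Proposition~\ref{0032}), and $\deg_{Y}R\Lie\mathcal{N}_{f,L}/[L:K] = \deg_{X}R\Lie\mathcal{N}_{f,K^{\sep}}$ by definition.

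Assembling the pieces: one gets
\[
	\deg R\Lie\mathcal{N}_{f}
	=
	\deg_{X}R\Lie\mathcal{N}_{f,K^{\sep}}
	+ c(A_{2}) - c(A_{1}) + c(f),
\]
which rearranges to the claimed formula once substituted into Theorem~\ref{0031}. \textbf{The main obstacle} I anticipate is bookkeeping the local comparison at the ramified places cleanly: one must relate, place by place on $Y$ above a bad place $v$ of $X$, three length quantities (the base-change-conductor cokernel for $A_{1}$, for $A_{2}$, and the $D_{f,v}$-contribution measuring $\Lie\mathcal{A}_{2}'\hookrightarrow\Lie\mathcal{A}_{2}$), and verify the alternating sum works out with the correct ramification weights $[k(v):k]/e_{v}$ rather than $[k_{L}(w):k]/[L:K]$ --- i.e.\ reproving the bookkeeping of Proposition~\ref{0045} in the relative (isogeny) setting. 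A clean way to avoid doing this by hand is to observe that over $Y$ (where $\Tilde{f}$ is already an isogeny and $\mathcal{A}_{2}'\times_{X}Y \to \mathcal{A}_{2,L}$ is an open immersion with the same Lie algebra) the identity $\deg_{Y}R\Lie\mathcal{N}_{f,L} = \deg_{Y}\Lie\mathcal{A}_{1,L} - \deg_{Y}\Lie\mathcal{A}_{2,L}$ holds with no correction term, and then subtract this from the analogous relation over $X$ base-changed to $Y$; the correction is forced to be $c(f) - (c(A_{2})-c(A_{1}))\cdot[L:K]$ purely by additivity of degrees in the two distinguished triangles, without ever unwinding the local lengths individually.
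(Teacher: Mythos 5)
Your proposal is correct and takes essentially the same route as the paper's proof: reduce via Theorem \ref{0031} to the identity $\deg R \Lie \mathcal{N}_{f} - c(f) = \deg_{X} R \Lie \mathcal{N}_{f, K^{\sep}} + c(A_{2}) - c(A_{1})$, and prove it by comparing the triangles $R \Lie \mathcal{N}_{f} \to \Lie \mathcal{A}_{1} \to \Lie \mathcal{A}_{2}'$ at the given level and at semistable level (the paper does this over $X^{\sep}$, you at a finite semistable level $Y$, which is the same computation), using Proposition \ref{0045} and the fact that the cokernel of $\Lie \mathcal{A}_{2}' \into \Lie \mathcal{A}_{2}$ accounts for $c(f)$. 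Only two trivial slips: $\mathcal{N}_{f, L}$ is in general only quasi-finite (not finite) flat, and the Lie-length interpretation of $c_{v}(f)$ is the Liu--Lorenzini--Raynaud property recalled in Section \ref{0017}, not \eqref{0013}--\eqref{0041}, which concern dimensions of the cohomology groups rather than Lie algebras.
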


\begin{proof}
	Let $\mathcal{A}_{i}$ be the N\'eron model of $A_{i}$ over $X$.
	Let $\mathcal{N}_{f}$ be the schematic closure of the kernel of $f$ in $\mathcal{A}_{1}$.
	By Theorem \ref{0031}, it is enough to show that
		\[
				\deg(R \Lie \mathcal{N}_{f}) - c(f)
			=
					\deg_{X}(R \Lie \mathcal{N}_{f, K^{\sep}})
				+ 
					c(A_{2}) - c(A_{1}).
		\]
	Here $\deg$ on the left-hand side is the usual degree
	of the determinant invertible sheaf on $X$,
	while $R \Lie \mathcal{N}_{f, K^{\sep}}$ on the right-hand side is
	a perfect complex over $X^{\sep}$
	and $\deg_{X}$ is the operator defined in Definition \ref{0077}.
	We have a morphism of distinguished triangles
		\[
			\begin{CD}
					R \Lie \mathcal{N}_{f, X^{\sep}}
				@>>>
					\Lie \mathcal{A}_{1, X^{\sep}}
				@>>>
					\Lie \mathcal{A}_{2, X^{\sep}}'
				\\ @VVV @VVV @VVV \\
					R \Lie \mathcal{N}_{f, K^{\sep}}
				@>>>
					\Lie \mathcal{A}_{1, K^{\sep}}
				@>>>
					\Lie \mathcal{A}_{2, K^{\sep}}
			\end{CD}
		\]
	of perfect complexes of $\Order_{X^{\sep}}$-modules.
	Taking $\deg_{X}$, we have
		\begin{align*}
			&
						\deg_{X}(R \Lie \mathcal{N}_{f, K^{\sep}})
					-
						\deg_{X}(R \Lie \mathcal{N}_{f, X^{\sep}})
			\\
			&	=
						\deg_{X}(\Lie \mathcal{A}_{1, K^{\sep}})
					-
						\deg_{X}(\Lie \mathcal{A}_{1, X^{\sep}})
					-
						\deg_{X}(\Lie \mathcal{A}_{2, K^{\sep}})
					+
						\deg_{X}(\Lie \mathcal{A}_{2, X^{\sep}}').
		\end{align*}
	By Proposition \ref{0045}, we have
		\[
					\deg_{X}(\Lie \mathcal{A}_{1, K^{\sep}})
				-
					\deg_{X}(\Lie \mathcal{A}_{1, X^{\sep}})
			=
				c(A_{1}).
		\]
	On the other hand,
		\begin{align*}
			&
						\deg_{X}(\Lie \mathcal{A}_{2, K^{\sep}})
					-
						\deg_{X}(\Lie \mathcal{A}_{2, X^{\sep}}')
			\\
			&	=
						c(A_{2})
					+
						\deg_{X}(\Lie \mathcal{A}_{2, X^{\sep}})
					-
						\deg_{X}(\Lie \mathcal{A}_{2, X^{\sep}}').
			\\
			&	=
						c(A_{2})
					+
						c(f).
		\end{align*}
	Combining these, we get the result.
\end{proof}


\section{%
	\texorpdfstring{Another, conditional proof of Theorem \ref{0031} by the $\mu$-invariant formula}
	{Another, conditional proof of Theorem \ref{0031} by the mu-invariant formula}
}

In this section,
we will give another, conditional but simpler proof of Theorem \ref{0031}
using the $\mu$-invariant formula in \cite{LLSTT21}.

Let $X$ be a geometrically connected, proper, smooth curve
over a perfect field $k$ of characteristic $p > 0$.
Let $K$ be its function field.
Assume that $k$ is finite with $q$ elements in this section.
Let $A$ be an abelian variety over $K$.
Write the $L$-function $L(A, t)$ of $A$ as
	\[
			L(A, t)
		=
			\frac{P_{1}(t)}{P_{0}(t) P_{2}(t)},
	\]
where $P_{i}(t) \in \Z[t]$ is a polynomial in $t$
with constant term $1$ whose reciprocal roots are
Weil $q$-numbers of weight $i + 1$.
See \cite[Section 2.2]{LLSTT21}.
Define $\theta_{A}$ to be the nonnegative rational number
such that $q^{\theta_{A}} P_{1}(t / q)$ is $p$-primitive
(namely, all the coefficients are $p$-adically integral and
some coefficients are $p$-adic units).
(It is actually an integer by \cite[Proposition A.3]{LLSTT21}.)
Recall the $\mu$-invariant formula of \cite{LLSTT21}
in the form stated in \cite[Remark 2.5.5]{LLSTT21}:
	\begin{equation} \label{0033}
			\theta_{A}
		=
				\chi^{0}(X, \mathcal{A})
			-
				\chi(X, \Lie \mathcal{A}).
	\end{equation}
This is expected to be true for all $A$.
So far it is verified in the following cases:
\begin{itemize}
	\item
		$\Sha(A / K \F_{q^{p^{n}}})$ is finite for all $n$
		(\cite[Corollary 2.5.1]{LLSTT21}).
	\item
		$A$ is the Jacobian of a geometrically connected, proper, smooth curve over $K$
		(\cite[Corollary 3.4.1]{LLSTT21}).
	\item
		$A$ has semistable reduction everywhere
		(\cite[Theorem 4.1.1]{LLSTT21}).
\end{itemize}

Let $f \colon A_{1} \to A_{2}$ be an isogeny of abelian varieties over $K$.
Assume that $A_{1}$ and $A_{2}$ satisfy the formula \eqref{0033}.
We will give a simpler proof of Theorem \ref{0031} under this assumption.

\begin{proof}[Another proof of Theorem \ref{0031} under \eqref{0033}]
	Let $\mathcal{A}_{i}$ and $\mathcal{N}_{f}$ be as defined in Section \ref{0018}.
	We have $\theta_{A_{1}} = \theta_{A_{2}}$
	since $L$-functions are isogeny invariant.
	By \eqref{0009} and \eqref{0033}, we are reduced to showing that
		\[
					\chi(X, \Lie \mathcal{A}_{1})
				-
					\chi(X, \Lie \mathcal{A}_{2})
			=
				\deg(R \Lie \mathcal{N}_{f}) - c(f).
		\]
	But this follows from \eqref{0034} and \eqref{0035}.
\end{proof}

\begin{remark}
	Without assuming $k$ to be finite,
	we can still define the number $\theta_{A}$
	to be $\sum_{\lambda} (1 - \lambda)$,
	where $\lambda$ runs over the multiset of slopes $< 1$ of
	the $F$-isocrystal $\mathcal{H}^{1}_{\Q_{p}}(-1)$ over $k$
	given in \cite[Appendix]{LLSTT21}
	by \cite[Proposition A.1]{LLSTT21}.
	The formula \eqref{0033} is expected to be true
	for a general $k$ with this definition of $\theta_{A}$.
	It implies Theorem \ref{0031} by the same proof,
	with ``$L$-functions'' replaced by ``$F$-isocrystals.''
\end{remark}


\newcommand{\etalchar}[1]{$^{#1}$}

\end{document}